\definecolor{MyCyan}{HTML}{00F9DE}
\setlist{nolistsep,leftmargin=*}
\long\def\@savemarbox#1#2{\global\setbox#1\vtop{\hsize\marginparwidth 
  \@parboxrestore\tiny\raggedright #2}}
\newcommand{\RR}{\mathbb{R}}  % Reals
\newcommand{\HH}{\mathbb{H}}  % Hyperbolic
\newcommand{\ZZ}{\mathbb{Z}}  % Integers
\newcommand{\calP}{\mathcal{P}}
\newcommand{\calL}{\mathcal{L}}
\renewcommand{\SS}{\mathbb{S}}
\renewcommand{\setminus}{{\smallsetminus}}
\newcommand{\from}{\colon\thinspace} % as in f \from X \to Y
\newtheorem{theorem}{Theorem}[section]
\newtheorem*{theorem*}{Theorem}
\newtheorem{proposition}[theorem]{Proposition}
\newtheorem{lemma}[theorem]{Lemma}
\newtheorem{corollary}[theorem]{Corollary}
\newtheorem*{namedtheorem}{\theoremname}
\newcommand{\theoremname}{testing}
\newenvironment{named}[1]{\renewcommand{\theoremname}{#1}\begin{namedtheorem}}{\end{namedtheorem}}
\theoremstyle{definition}
\newtheorem{definition}[theorem]{Definition}
\newtheorem{remark}[theorem]{Remark}
\newcommand{\refthm}[1]{Theorem~\ref{Thm:#1}}
\newcommand{\reflem}[1]{Lemma~\ref{Lem:#1}}
\newcommand{\refprop}[1]{Proposition~\ref{Prop:#1}}
\newcommand{\refcor}[1]{Corollary~\ref{Cor:#1}}
\newcommand{\refdef}[1]{Definition~\ref{Def:#1}}
\newcommand{\refsec}[1]{Section~\ref{Sec:#1}}
\newcommand{\reffig}[1]{Figure~\ref{Fig:#1}}
\title[Modular links: Bunch algorithm and upper volume bounds]{Modular links:\\Bunch algorithm and upper volume bounds} 
\author{Connie On Yu Hui} 
\address[]{School of Mathematics, Monash University, VIC 3800, Australia } 
\email[]{onyu.hui@monash.edu | connieonyuhui@gmail.com} 
\author{Jos\'{e} Andr\'{e}s Rodr\'{\i}guez Migueles}
\address[]{Ludwig Maximilian University of Munich, Germany}
\email[] {migueles@math.lmu.de} 
\subjclass[2020]{57K10, 57K32 (primary); 37D40, 37E35 (secondary).}
\begin{document} 
\begin{abstract} 
In the 1970s, Williams developed an algorithm that has been used to construct and study modular links in the Lorenz template. We introduce an improved algorithm, which we call the bunch algorithm, to provide more insights into the geometry of modular links and Lorenz links. Using the machinery developed for the bunch algorithm, we provide the first upper volume bound that is independent of word exponents and quadratic in the braid index of the Lorenz link component for all modular link complements. We find families of modular knot complements with upper volume bounds that are linear in the braid index. A classification of modular link complements based on the relative magnitudes of word exponents is also presented.  
\end{abstract} 

\maketitle 

\section{Introduction} 
In the 1970s, Williams~\cite{Williams:StructureOfLorenzAttractors, Birman-Williams:KnottedPeriodicOrbitsInDynamicalSystemsI} introduced an algorithm to study the knot properties of closed orbits appearing in the Lorenz equations. The algorithm requires the lexicographic order of all words in the same cyclic permutation class of a primitive word, and it has been used to study the corresponding closed orbits, which are called \emph{Lorenz links}. See \cite{Bergeron-Pinsky-Silberman:UpperBound, RodriguezMigueles:PeriodsOfContinuedFractions} for example. In fact, in~\cite{Ghys:KnotsAndDynamics} Ghys observed that the obtained links are isotopic to the periodic orbits of the geodesic flow of the modular surface after the boundary torus is trivially filled. These periodic orbits are called \emph{modular links}. In this paper, we use the notion of bunches to introduce a more insightful and efficient algorithm for constructing modular links, which helps us show the results on volume bounds.  The new algorithm involves a significantly smaller number of items to be ordered when the exponents in the words are large. More importantly, the proposed algorithm allows us to understand each modular link via a relatively small part of it which we call the \emph{full bunches} (see \reffig{SplitTemplateWithLabelsFullBs} and \refdef{FullBs&OrderWithin}). While the current methods of constructing a Lorenz knot such as Williams' algorithm and the method in \cite{Dehornoy:ZeroesOfAlexanderPolyOfLorenzKnot} that uses Young diagrams are easy to implement, the algorithm developed in this paper gives us better theoretical insights into links embedded in a template (see \refsec{WhyBunches} for further explanation) and allows us to find a more intrinsic upper volume bound for all modular link complements and Lorenz link complements. The research on the volume bounds for $3$-manifolds has been active in the past few decades, a general review of the extensive work on volume bounds can be found in~\cite[Section~4]{Futer-Kalfagianni-Purcell:SurveyHypKnotTheory}. 

Any modular link complement $\mathrm{T^1}S_{\textup{mod}}\setminus \widehat\gamma$ admits a finite-volume complete hyperbolic metric~\cite{Foulon-Hasselblatt:ContactAnosov}.  There has been interest in relating the volume of modular link complements and the properties of the associated closed geodesics $\gamma$ in the modular surface (see \cite{Bergeron-Pinsky-Silberman:UpperBound, RodriguezMigueles:LowerBound, RodriguezMigueles:PeriodsOfContinuedFractions} for example).  Bergeron, Pinsky, and Silberman \cite[Section 3]{Bergeron-Pinsky-Silberman:UpperBound} provide an upper volume bound for all modular link complements. Their upper bound depends on the combinatorial terms in the symbolic descriptions of the modular links, namely, the word periods and word exponents (see Sections~\ref{Sec:Background} and \ref{Sec:Preliminaries} for more details).  Such a bound grows faster than quadratically with the word period when the exponents are large (see \refsec{Discussion} for more explanation). In this paper, we use the bunch notion developed in \refsec{NewAlg} to give an upper volume bound that is independent of the word exponents and is quadratic in the sum of word period(s). 

\begin{named}{\refthm{QuadraticUpperBound}}
If $L$ is a modular link with the sum of word period(s) equal $n$, then the hyperbolic volume of the corresponding modular link complement satisfies the following inequality: 
\[\mathrm{Vol}(\mathrm{T^1}S_{\textup{mod}} \setminus L) \leq 12 v_{\textup{tet}} (n^2+3n+2), \]
where $v_{\textup{tet}}\approx 1.01494$ is the volume of the regular ideal tetrahedron. 
\end{named}

\begin{remark} \label{Rmk:TripNum_BraidIndex_PeriodContFraction}
The sum of word period(s) of a modular link $L$ is the same as the trip number of $L$, which equals the braid index of $L$ in $\SS^3$ (see \cite{Franks-Williams:BraidsAndJonesPolynomial, Birman-Williams:KnottedPeriodicOrbitsInDynamicalSystemsI, Birman-Kofman:NewTwistOnLorenzLinks}). The word period of a modular knot is the same as half the period of the continued fraction expansion of the associated geodesic on the modular surface (see \cite{Series:ModularSurface, RodriguezMigueles:PeriodsOfContinuedFractions}). 
\end{remark}

To prove \refthm{QuadraticUpperBound}, we use a technique developed in \cite{RodriguezMigueles:PeriodsOfContinuedFractions} that involves annular Dehn filling and self-intersection numbers of multi-curves in the Seifert-fibred space projection \cite{Cremaschi-RodriguezMigueles:HypOfLinkCpmInSFSpaces}.  The bunch notion allows us to more systematically find the annuli for annular Dehn filling and provide clean arguments for other results in this paper. The bunch perspective also helps us construct the parent link  (see \refdef{ParentLink}) for each modular link in its corresponding Seifert-fibred space and find their upper volume bounds. 

Since Dehn filling decreases volume \cite{Thurston:Geom&TopOf3Mfd}, the upper bound in \refthm{QuadraticUpperBound} also acts as an upper volume bound for all Lorenz link complements that admit complete hyperbolic structures (see \refcor{QuadraticUpperBound}). 

Champanerkar \emph{et~al}~\cite[Theorem 1.7]{C-F-K-N-P:VolBsForGenTwistedTorusLinks} provide upper volume bounds that are quadratic or cubic with respect to the parameter $r_1$ in their \emph{T}-link notation for volumes of generalised twisted torus links. Since the set of generalised twisted torus links contains all \emph{T}-links and \cite{Birman-Kofman:NewTwistOnLorenzLinks} shows that \emph{T}-links coincide with Lorenz links, the collection of Lorenz links is a subset of the collection of all generalised twisted torus links. Whether their bound is quadratic or cubic depends on whether the twists are all full twists. Using the bunch notion introduced in this paper, we observe that their parameter $r_1$ depends on the magnitude of the word exponents: further explanation is provided in \refsec{Discussion}.  As the volume bound in \refthm{QuadraticUpperBound} of this paper does not depend on word exponents, it serves as a better bound for the volumes of hyperbolic Lorenz link complements when the word exponents are large. 

Having understood the modular links from the perspective of bunches, we observe a classification of modular link complements.

\begin{named}{\refthm{Classification} (Classification)}
All modular link complements can be partitioned into classes of members that have the same base orders of their corresponding modular links. All members in each class share the same parent manifold (up to homeomorphism) and the same upper volume bound. 
\end{named} 

Based on the classification result and the construction of parent links, we find classes of modular knot complements with upper bounds depending linearly on the word period (see \refthm{LinearBound}), which generalises the result in \cite[Theorem 1.4]{RodriguezMigueles:PeriodsOfContinuedFractions}.  

\subsection{Background: Modular links and labelled code words} \label{Sec:Background}
Let $S_{\textup{mod}}$ be the \emph{modular surface}, which is the quotient of $\HH^2$ by the modular group $\textup{PSL}(2,\ZZ)$.   It is known that the unit tangent bundle 
$\textup{T}^1S_{\textup{mod}}$ is homeomorphic to the complement of the trefoil knot in $\mathbb{S}^3$. 
Any oriented closed geodesic $\gamma$ on $S_{\textup{mod}}$ has a canonical lift $\widehat\gamma$ in $\textup{T}^1S_{\textup{mod}}$, which is a periodic orbit of the geodesic flow on the unit tangent bundle. The isotopy class of such a canonical lift $\widehat\gamma$ in $\mathrm{T}^1S_{\textup{mod}}$ is denoted as a \emph{modular knot}. A \emph{modular link} $L$ is a finite collection of disjoint modular knots in $\textup{T}^1S_{\textup{mod}}$. 

The \emph{modular link complement} $\textup{T}^1S_{\textup{mod}} \setminus L$ is the complement of the modular link $L$ in $\textup{T}^1S_{\textup{mod}}$ and it can be viewed as the complement of $L$ and the trefoil knot in $\mathbb{S}^3$.    Ghys \cite{Ghys:KnotsAndDynamics} observed that modular knots in $\mathbb{S}^3$ are \emph{Lorenz knots}, which are isotopy classes of periodic orbits of the Lorenz flow. 

Guckenheimer and Williams \cite{Guckenheimer-Williams:StructuralStabilityOfLorenzAttractors,Williams:StructureOfLorenzAttractors} introduced a geometric model for interpreting the dynamics of the flow, which involves the \emph{Lorenz template}: a  branched surface embedded in $\RR^3$ (or $\SS^3$) as illustrated in \reffig{Templates} (left). Such a geometric model was later justified by Tucker \cite{Tucker:Smale14thProblem}. Lorenz knots can thus be regarded as knots embedded in the template and studied using symbolic dynamics (see \cite{Williams:StructureOfLorenzAttractors, Birman-Williams:KnottedPeriodicOrbitsInDynamicalSystemsI}). 

Research related to modular links and symbolic dynamics involves symbolic descriptions of periodic orbits. The symbols $x$ and $y$ represent oriented arcs of modular links that pass through the left and the right ears of the Lorenz template respectively (see \reffig{Templates}, left). The code words arising from symbolic dynamics are the same as the code words obtained from parabolic matrices $x$ and $y$ that generate $\textup{PSL}(2,\ZZ)\cong \ZZ_2 \ast \ZZ_3$ (see \cite{Bergeron-Pinsky-Silberman:UpperBound}, \cite{Brandts-Pinsky-Silberman:VolModularLinks} for more details).  These code words also coincide with the cutting sequences arising from lifting closed oriented geodesics in the modular surface to the hyperbolic upper half plane with Farey tessellation (see \cite{Series:ModularSurface} for details). By \cite[Proposition 3.1]{Birman-Williams:KnottedPeriodicOrbitsInDynamicalSystemsI}, the set of all Lorenz links is in one-to-one correspondence with the collection of all finite sets of distinct cyclic permutation classes of positive aperiodic words in $x$ and $y$.  

Classes of primitive code words up to cyclic permutations have been used to describe and study modular links (See, for example, \cite{Birman-Williams:KnottedPeriodicOrbitsInDynamicalSystemsI, Bergeron-Pinsky-Silberman:UpperBound, RodriguezMigueles:PeriodsOfContinuedFractions}).  Instead of considering just the code words, we initiate the use of \emph{labelled code words}, which are code words that have base labels distinguishing the $x$-bases and $y$-bases, to introduce the idea of bunches in modular links and 
study the hyperbolic volumes of the modular link complements.  

\subsection{Organisation} 
\refsec{Preliminaries} contains preliminaries that clarify terms and notations used in all other sections. In \refsec{NewAlg}, we introduce the notion of bunches in modular links and present an algorithm for constructing modular links in the Lorenz template. This gives us tools to find an upper volume bound for all modular link complements in \refsec{UpperVolBound}, a way of classifying modular link complements in \refsec{ClassifyingModLinks}, and families of modular knot complements that have upper volume bounds depending linearly on word period in \refsec{LinearInPeriod}.  \refsec{Discussion} provides a further discussion on the comparison of bounds, 
 the proof techniques used, and related questions.   

\subsection{Acknowledgements} 
We thank Pierre Dehornoy, Jessica Purcell, Tali Pinsky, Layne Hall, Norman Do, and Panagiotis Papadopoulos for helpful discussions.  We also thank the organisers of the conference Dynamics, Foliation, and Geometry III, held at MATRIX Australia, where the collaboration for this work started. The second author recognizes the support from the Special Priority Programme SPP 2026 Geometry at Infinity funded by the DFG. We thank the anonymous reviewer for the helpful suggestions and comments. 

\section{Preliminaries} \label{Sec:Preliminaries} 
Recall that the \emph{Lorenz template} is a branched surface embedded in $\RR^3$ (or $\SS^3$) as illustrated in \reffig{Templates} (left) where the letter symbol $x$ represents a loop in the left ear and the letter symbol $y$ represents a loop in the right ear. 

Throughout this paper, the term \emph{code word} (or simply \emph{word}) is the same as the \emph{Lorenz word} defined in \cite{Birman-Williams:KnottedPeriodicOrbitsInDynamicalSystemsI} unless otherwise specified. That is, a \emph{code word} is a cyclic permutation class of a finite sequence of letters $x$ and $y$, each letter $x$ (or resp. $y$) in the sequence represents a loop in the left (or resp. right) ear of the Lorenz template that forms part of the Lorenz knot. By abuse of terminology, we sometimes call the finite sequence of letters a \emph{word} when the context is clear.  From now on, we consider a modular knot $K$ as a simple closed curve embedded in the Lorenz template with code word containing both $x$ and $y$ and the corresponding \emph{modular knot complement} $\mathrm{T^1}S_{\textup{mod}} \setminus K$ as the complement of $K$ in the unit tangent bundle $\mathrm{T^1}S_{\textup{mod}}$. 

Let $n$ be a positive integer. Given any code word $w$ representing a modular knot $K$, we can cyclically permute $w$ to give a code word $w'$ such that $w'$ has the form $x^{k_1} y^{l_1} \ldots x^{k_n} y^{l_n}$ where the first and the last letters are different and any neighbouring bases are different. We call $w$ a \emph{period-$n$ code word} for $K$. The positive integers $k_1, \ldots k_n$ are called the \emph{$x$-exponent(s)} of the word and the positive integers $l_1, \ldots l_n$ are called the \emph{$y$-exponent(s)} of the word. The \emph{maximal $x$-exponent} of $w$ is $\mathrm{max}\{k_1, \ldots, k_n\}$ and the \emph{maximal $y$-exponent} of $w$ is $\mathrm{max}\{l_1, \ldots, l_n\}$.  

For convenience purposes, we use the split template (\reffig{Templates}, right) instead of the usual Lorenz template to illustrate most of our examples in this paper. Note that by gluing the $x$-split line and $y$-split line to the branch line accordingly, we obtain the template in \reffig{Templates}, left. Unless otherwise specified, we will assume the gluings exist in the split template such that the split template is topologically the same as the Lorenz template. 

% FIGURE %%%%%%%%%%%%%%%%%%%%%%%%%%%
\begin{figure}
% left:
%% Creator: Inkscape 1.1.2 (b8e25be833, 2022-02-05), www.inkscape.org
%% PDF/EPS/PS + LaTeX output extension by Johan Engelen, 2010
%% Accompanies image file 'Template.pdf' (pdf, eps, ps)
%%
%% To include the image in your LaTeX document, write
%%   \input{<filename>.pdf_tex}
%%  instead of
%%   \includegraphics{<filename>.pdf}
%% To scale the image, write
%%   \def\svgwidth{<desired width>}
%%   \input{<filename>.pdf_tex}
%%  instead of
%%   \includegraphics[width=<desired width>]{<filename>.pdf}
%%
%% Images with a different path to the parent latex file can
%% be accessed with the `import' package (which may need to be
%% installed) using
%%   \usepackage{import}
%% in the preamble, and then including the image with
%%   \import{<path to file>}{<filename>.pdf_tex}
%% Alternatively, one can specify
%%   \graphicspath{{<path to file>/}}
%% 
%% For more information, please see info/svg-inkscape on CTAN:
%%   http://tug.ctan.org/tex-archive/info/svg-inkscape
%%
\begingroup%
  \makeatletter%
  \providecommand\color[2][]{%
    \errmessage{(Inkscape) Color is used for the text in Inkscape, but the package 'color.sty' is not loaded}%
    \renewcommand\color[2][]{}%
  }%
  \providecommand\transparent[1]{%
    \errmessage{(Inkscape) Transparency is used (non-zero) for the text in Inkscape, but the package 'transparent.sty' is not loaded}%
    \renewcommand\transparent[1]{}%
  }%
  \providecommand\rotatebox[2]{#2}%
  \newcommand*\fsize{\dimexpr\f@size pt\relax}%
  \newcommand*\lineheight[1]{\fontsize{\fsize}{#1\fsize}\selectfont}%
  \ifx\svgwidth\undefined%
    \setlength{\unitlength}{141.44883994bp}%
    \ifx\svgscale\undefined%
      \relax%
    \else%
      \setlength{\unitlength}{\unitlength * \real{\svgscale}}%
    \fi%
  \else%
    \setlength{\unitlength}{\svgwidth}%
  \fi%
  \global\let\svgwidth\undefined%
  \global\let\svgscale\undefined%
  \makeatother%
  \begin{picture}(1,0.46887706)%
    \lineheight{1}%
    \setlength\tabcolsep{0pt}%
    \put(0,0){\includegraphics[width=\unitlength,page=1]{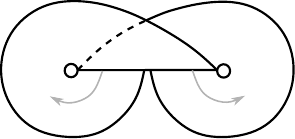}}%
    \put(0.24443091,0.07011143){\color[rgb]{0.6,0.6,0.6}\makebox(0,0)[lt]{\lineheight{1.25}\smash{\begin{tabular}[t]{l}$x$\end{tabular}}}}%
    \put(0.68548454,0.07368241){\color[rgb]{0.6,0.6,0.6}\makebox(0,0)[lt]{\lineheight{1.25}\smash{\begin{tabular}[t]{l}$y$\end{tabular}}}}%
  \end{picture}%
\endgroup%

\hspace{5mm} 
% right: 
%% Creator: Inkscape 1.1.2 (b8e25be833, 2022-02-05), www.inkscape.org
%% PDF/EPS/PS + LaTeX output extension by Johan Engelen, 2010
%% Accompanies image file 'SplitTemplate.pdf' (pdf, eps, ps)
%%
%% To include the image in your LaTeX document, write
%%   \input{<filename>.pdf_tex}
%%  instead of
%%   \includegraphics{<filename>.pdf}
%% To scale the image, write
%%   \def\svgwidth{<desired width>}
%%   \input{<filename>.pdf_tex}
%%  instead of
%%   \includegraphics[width=<desired width>]{<filename>.pdf}
%%
%% Images with a different path to the parent latex file can
%% be accessed with the `import' package (which may need to be
%% installed) using
%%   \usepackage{import}
%% in the preamble, and then including the image with
%%   \import{<path to file>}{<filename>.pdf_tex}
%% Alternatively, one can specify
%%   \graphicspath{{<path to file>/}}
%% 
%% For more information, please see info/svg-inkscape on CTAN:
%%   http://tug.ctan.org/tex-archive/info/svg-inkscape
%%
\begingroup%
  \makeatletter%
  \providecommand\color[2][]{%
    \errmessage{(Inkscape) Color is used for the text in Inkscape, but the package 'color.sty' is not loaded}%
    \renewcommand\color[2][]{}%
  }%
  \providecommand\transparent[1]{%
    \errmessage{(Inkscape) Transparency is used (non-zero) for the text in Inkscape, but the package 'transparent.sty' is not loaded}%
    \renewcommand\transparent[1]{}%
  }%
  \providecommand\rotatebox[2]{#2}%
  \newcommand*\fsize{\dimexpr\f@size pt\relax}%
  \newcommand*\lineheight[1]{\fontsize{\fsize}{#1\fsize}\selectfont}%
  \ifx\svgwidth\undefined%
    \setlength{\unitlength}{190.26874123bp}%
    \ifx\svgscale\undefined%
      \relax%
    \else%
      \setlength{\unitlength}{\unitlength * \real{\svgscale}}%
    \fi%
  \else%
    \setlength{\unitlength}{\svgwidth}%
  \fi%
  \global\let\svgwidth\undefined%
  \global\let\svgscale\undefined%
  \makeatother%
  \begin{picture}(1,0.60212669)%
    \lineheight{1}%
    \setlength\tabcolsep{0pt}%
    \put(0,0){\includegraphics[width=\unitlength,page=1]{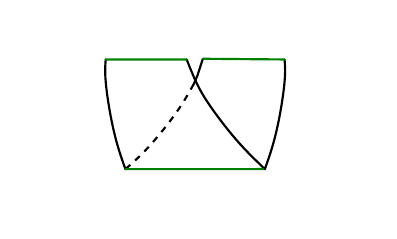}}%
    \put(0.36099305,0.11529351){\color[rgb]{0,0.50196078,0}\makebox(0,0)[lt]{\lineheight{1.25}\smash{\begin{tabular}[t]{l}branch line\end{tabular}}}}%
    \put(0.19163019,0.48277251){\color[rgb]{0,0.50196078,0}\makebox(0,0)[lt]{\lineheight{1.25}\smash{\begin{tabular}[t]{l}$x$-split line\end{tabular}}}}%
    \put(0.52924814,0.48291689){\color[rgb]{0,0.50196078,0}\makebox(0,0)[lt]{\lineheight{1.25}\smash{\begin{tabular}[t]{l}$y$-split line\end{tabular}}}}%
    \put(0,0){\includegraphics[width=\unitlength,page=2]{SplitTemplate.pdf}}%
    \put(-0.00304873,0.29635698){\color[rgb]{0,0.50196078,0}\makebox(0,0)[lt]{\lineheight{1.25}\smash{\begin{tabular}[t]{l}glue\end{tabular}}}}%
    \put(0.87991376,0.29635698){\color[rgb]{0,0.50196078,0}\makebox(0,0)[lt]{\lineheight{1.25}\smash{\begin{tabular}[t]{l}glue\end{tabular}}}}%
  \end{picture}%
\endgroup%

\caption{Left: The Lorenz template. Right: The split template, i.e., the Lorenz template cut along the branch line. If the $y$-split line is glued to the right half of the branch line and the $x$-split line is glued to the left half of the branch line (without twisting of strips), we obtain the Lorenz template.
\label{Fig:Templates}}
\end{figure}  

Observe that there are two split line segments on top of the split template. 
We call the left one the \emph{$x$-split line} and the right one the \emph{$y$-split line} of the split template (\reffig{Templates}, right).
For the sake of simplicity, we call the split branch line at the bottom the \emph{branch line}. The \emph{left ear} of the Lorenz template is the part of the split template from the $x$-split line to the branch line while the \emph{right ear} is the part from the $y$-split line to the branch line.   

\subsection{Williams' algorithm} 
We will go through Williams' algorithm using an example. 

Let $w=x^{10}y^2x^5y^2x^7y^6x^2y^2x^5y^3$ be a code word for a modular knot $K$. 

To construct the modular knot $K$ in the split Lorenz template, we first list all the members in a cyclic permutation class of the code word $w$ in the following way: Starting from the second word in the list, the word is obtained by putting the first letter in the last word to the end of the word. 

Then, we find the lexicographic order (magenta numbers on the right) of all these words like the following:

\begin{align*}
    x^{10}y^2x^5y^2x^7y^6x^2y^2x^5y^3 \quad \quad &\textcolor{magenta}{1} \\ 
    x^{9}y^2x^5y^2x^7y^6x^2y^2x^5y^3x^{1} \quad \quad &\textcolor{magenta}{2} \\ 
    x^{8}y^2x^5y^2x^7y^6x^2y^2x^5y^3x^{2} \quad \quad &\textcolor{magenta}{3} \\ 
    x^{7}y^2x^5y^2x^7y^6x^2y^2x^5y^3x^{3} \quad \quad &\textcolor{magenta}{4} \\
    x^{6}y^2x^5y^2x^7y^6x^2y^2x^5y^3x^{4} \quad \quad &\textcolor{magenta}{6} \\ 
    x^{5}y^2x^5y^2x^7y^6x^2y^2x^5y^3x^{5} \quad \quad &\textcolor{magenta}{9} \\
    x^{4}y^2x^5y^2x^7y^6x^2y^2x^5y^3x^{6} \quad \quad &\textcolor{magenta}{13} \\ 
    x^{3}y^2x^5y^2x^7y^6x^2y^2x^5y^3x^{7} \quad \quad &\textcolor{magenta}{17} \\ 
    x^{2}y^2x^5y^2x^7y^6x^2y^2x^5y^3x^{8} \quad \quad &\textcolor{magenta}{21} \\ 
    x^{1}y^2x^5y^2x^7y^6x^2y^2x^5y^3x^{9} \quad \quad &\textcolor{magenta}{26} \\ 
    y^2x^5y^2x^7y^6x^2y^2x^5y^3x^{10} \quad \quad &\textcolor{magenta}{37} \\ 
    y^1x^5y^2x^7y^6x^2y^2x^5y^3x^{10}y^1 \quad \quad &\textcolor{magenta}{32} \\ 
    x^5y^2x^7y^6x^2y^2x^5y^3x^{10}y^2 \quad \quad &\textcolor{magenta}{8} \\ 
    x^{4}y^2x^7y^6x^2y^2x^5y^3x^{10}y^2x^{1} \quad \quad &\textcolor{magenta}{12} \\ 
    x^{3}y^2x^7y^6x^2y^2x^5y^3x^{10}y^2x^{2} \quad \quad &\textcolor{magenta}{16} \\ 
    x^{2}y^2x^7y^6x^2y^2x^5y^3x^{10}y^2x^{3} \quad \quad &\textcolor{magenta}{20} \\ 
    x^{1}y^2x^7y^6x^2y^2x^5y^3x^{10}y^2x^{4} \quad \quad &\textcolor{magenta}{25} \\ 
    y^2x^7y^6x^2y^2x^5y^3x^{10}y^2x^{5} \quad \quad &\textcolor{magenta}{36} \\ 
    y^{1}x^7y^6x^2y^2x^5y^3x^{10}y^2x^{5}y^{1} \quad \quad &\textcolor{magenta}{31} \\ 
    x^7y^6x^2y^2x^5y^3x^{10}y^2x^{5}y^{2} \quad \quad &\textcolor{magenta}{5} \\ 
    x^{6}y^6x^2y^2x^5y^3x^{10}y^2x^{5}y^{2}x^{1} \quad \quad &\textcolor{magenta}{7} \\ 
    x^{5}y^6x^2y^2x^5y^3x^{10}y^2x^{5}y^{2}x^{2} \quad \quad &\textcolor{magenta}{11} \\ 
    x^{4}y^6x^2y^2x^5y^3x^{10}y^2x^{5}y^{2}x^{3} \quad \quad &\textcolor{magenta}{15} \\ 
    x^{3}y^6x^2y^2x^5y^3x^{10}y^2x^{5}y^{2}x^{4} \quad \quad &\textcolor{magenta}{19} \\ 
    x^{2}y^6x^2y^2x^5y^3x^{10}y^2x^{5}y^{2}x^{5} \quad \quad &\textcolor{magenta}{24} \\ 
    x^{1}y^6x^2y^2x^5y^3x^{10}y^2x^{5}y^{2}x^{6} \quad \quad &\textcolor{magenta}{29} \\ 
    y^6x^2y^2x^5y^3x^{10}y^2x^{5}y^{2}x^{7} \quad \quad &\textcolor{magenta}{44} \\ 
    y^{5}x^2y^2x^5y^3x^{10}y^2x^{5}y^{2}x^{7}y^{1} \quad \quad &\textcolor{magenta}{43} \\ 
    y^{4}x^2y^2x^5y^3x^{10}y^2x^{5}y^{2}x^{7}y^{2} \quad \quad &\textcolor{magenta}{42} \\ 
    y^{3}x^2y^2x^5y^3x^{10}y^2x^{5}y^{2}x^{7}y^{3} \quad \quad &\textcolor{magenta}{41} 
\end{align*}

\begin{align*}
    y^{2}x^2y^2x^5y^3x^{10}y^2x^{5}y^{2}x^{7}y^{4} \quad \quad &\textcolor{magenta}{39} \\ 
    y^{1}x^2y^2x^5y^3x^{10}y^2x^{5}y^{2}x^{7}y^{5} \quad \quad &\textcolor{magenta}{34} \\ 
    x^2y^2x^5y^3x^{10}y^2x^{5}y^{2}x^{7}y^{6} \quad \quad &\textcolor{magenta}{22} \\ 
    x^1y^2x^5y^3x^{10}y^2x^{5}y^{2}x^{7}y^{6}x^1 \quad \quad &\textcolor{magenta}{27} \\
    y^2x^5y^3x^{10}y^2x^{5}y^{2}x^{7}y^{6}x^2 \quad \quad &\textcolor{magenta}{38} \\ 
    y^1x^5y^3x^{10}y^2x^{5}y^{2}x^{7}y^{6}x^2y^1 \quad \quad &\textcolor{magenta}{33} \\
    x^5y^3x^{10}y^2x^{5}y^{2}x^{7}y^{6}x^2y^2 \quad \quad &\textcolor{magenta}{10} \\ 
    x^{4}y^3x^{10}y^2x^{5}y^{2}x^{7}y^{6}x^2y^2x^{1} \quad \quad &\textcolor{magenta}{14} \\ 
    x^{3}y^3x^{10}y^2x^{5}y^{2}x^{7}y^{6}x^2y^2x^{2} \quad \quad &\textcolor{magenta}{18} \\ 
    x^{2}y^3x^{10}y^2x^{5}y^{2}x^{7}y^{6}x^2y^2x^{3} \quad \quad &\textcolor{magenta}{23} \\ 
    x^{1}y^3x^{10}y^2x^{5}y^{2}x^{7}y^{6}x^2y^2x^{4} \quad \quad &\textcolor{magenta}{28} \\ 
    y^3x^{10}y^2x^{5}y^{2}x^{7}y^{6}x^2y^2x^{5} \quad \quad &\textcolor{magenta}{40} \\ 
    y^{2}x^{10}y^2x^{5}y^{2}x^{7}y^{6}x^2y^2x^{5}y^{1} \quad \quad &\textcolor{magenta}{35} \\ 
    y^{1}x^{10}y^2x^{5}y^{2}x^{7}y^{6}x^2y^2x^{5}y^{2} \quad \quad &\textcolor{magenta}{30} 
\end{align*}

There are $44$ letters in the code word $w$, and thus there are $44$ members in the cyclic permutation class of the code word. Mark $44$ points on the branch line of the split template as shown in \reffig{WilliamsAlg1}.  

Consider the first few numbers in the sequence of magenta numbers, \textcolor{magenta}{$1, 2, 3, 4, 6, 9, 13, \ldots$}. As shown in \reffig{WilliamsAlg1}, we draw an overstrand from $1$ (top) to $2$ (bottom), and then from $2$ (top) to $3$ (bottom), $3$ (top) to $4$ (bottom), $4$ (top) to $6$ (bottom), $6$ (top) to $9$ (bottom), $9$ (top) to $13$ (bottom), etc.   

After drawing all the strands using the sequence of numbers (coloured in magenta), the desired modular knot represented in the split template is obtained (see \reffig{WilliamsAlg2}).  

\begin{figure}%[H] 
\captionsetup[subfigure]{position=b}
\centering
\subcaptionbox{Split template with 44 points marked on the branch line. The seven black line segments correspond to the first seven overstrands that are constructed. 
\label{Fig:WilliamsAlg1} }
{\hspace{-20pt}\import{figures/}{WilliamsAlg1.pdf_tex}}  

\vspace{20pt}
\subcaptionbox{The modular knot with the code word $w=x^{10}y^2x^5y^2x^7y^6x^2y^2x^5y^3$ constructed using Williams' algorithm. 
\label{Fig:WilliamsAlg2}}
{\hspace{-20pt}\import{figures/}{WilliamsAlg2.pdf_tex}}  
\caption{Illustration of Williams' algorithm \label{Fig:
WilliamsAlg}} 
\end{figure} 

\subsection{Motivation for defining bunches}

In \refsec{NewAlg}, we first define full bunches (\refdef{FullBs&OrderWithin}) and then define bunches (\refdef{Bunches&OrderWithin}). We will explain the motivation that led us to such an order of definitions. 

\begin{figure}
\stackunder[5pt]{\hspace{-20pt}\import{figures/}{WilliamsAlg2_coloured.pdf_tex}}{}
\caption{Colour the letters in the code word $w$ and then colour the corresponding strands in \reffig{WilliamsAlg2} with the corresponding colour. 
\label{Fig:WilliamsAlg2_coloured}}
\end{figure}  

Consider \reffig{WilliamsAlg2}, which shows a modular knot with code word $w$ in the split Lorenz template. If we colour (or label) the $x$-letters with decreasing intensities of the blue colour (or label) and the $y$-letters with decreasing intensities of red colour, and if we colour the strand corresponding to each letter with the same colour, we obtain \reffig{WilliamsAlg2_coloured}. 

In \reffig{WilliamsAlg2_coloured}, we observed that there was a nontrivial permutation of colour intensities in the rightmost five blue strands that pass from the $x$-split line to the right side of the branch line. Similarly, we observed a nontrivial permutation of colour intensities in the leftmost five red strands that pass from the $y$-split line to the left side of the branch line. We wondered how we can know the order of the colour intensities based on the code word. Hence, the idea of full bunches came first in a natural way before the idea of bunches, the precise definition of full bunches and such an order can be found in \refdef{FullBs&OrderWithin}. And then we observed collections of neighbouring strands that give a subsequence of the aforementioned sequence of colour intensities. Hence, we naturally defined full bunches and the orders within the full bunches (\refdef{FullBs&OrderWithin}) first, and then used those definitions to define the notion of bunches (\refdef{Bunches&OrderWithin}).

\section{A new algorithm for constructing modular links} \label{Sec:NewAlg}
In this section, we introduce the notion of bunches and an algorithm for constructing modular links in the Lorenz template.  The key idea of the new algorithm is to find the order within full bunches (see \reffig{SplitTemplateWithLabelsFullBs}, \refprop{OrderWithinFullBs} and the example that follows), which helps us construct parent manifolds in the next section.  Readers may find examples and pictures in this section more illuminating than the definitions or statements at times.  

\begin{definition} \label{Def:BasesOfWord}
    Given a period-$n$ code word $w$ of a modular knot, $w$ can be expressed in the form $x^{k_1} y^{l_1} \ldots x^{k_n} y^{l_n}$. We call each of the $x$'s in the expression with some $k_i$ as exponent an \emph{$x$-base}, and each of the $y$'s in the expression with some $l_j$ as exponent a \emph{$y$-base}. A \emph{base of the word} $w$ is an $x$-base or a $y$-base of $w$.

An \emph{$x$-turn} of the word $w$ is the overcrossing strand corresponding to an $x$-letter in $w$, it is homeomorphic to some half-closed half-open interval $[r,s)$. Similarly, a \emph{$y$-turn} of the word $w$ is the undercrossing strand corresponding to a $y$-letter in $w$. A \emph{turn} is an $x$-turn or a $y$-turn. 
\end{definition}
 For example, the word $x^5y^3x^1y^6$ has two $x$-bases and two $y$-bases. 

From now on, we assume the following unless otherwise specified: 

\begin{enumerate}
    \item Let $w\coloneqq x^{k_1} y^{l_1} \ldots x^{k_n} y^{l_n}$ be a period-$n$ word of a modular knot $K$. 
    \item Let $\mu, \lambda\in[1,n]\cap\ZZ$ such that $k_{\mu}$ and $l_{\lambda}$ are a maximal $x$-exponent and a maximal $y$-exponent of $w$ respectively. We may assume $\mu=1$ because all cyclic permutations of a word represent the same modular knot. 
  \item To distinguish different $x$-bases in the same word $w$ and to distinguish different subwords of $w$ such as $x^{k_1}$ and $x^{k_2}$ with $k_1 = k_2$, we label (or equivalently, colour) the $x$-bases in the word $w$ with integral subscripts (or colours of different intensities). We label (or colour) $y$-bases for a similar reason. That is, we view each code word $w$ as the \emph{labelled code word} $x_1^{k_1} y_1^{l_1} \ldots x_n^{k_n} y_n^{l_n}$ with subscripts added to $x$-base(s) and $y$-base(s). By abuse of terminology, the terms \emph{word}, \emph{code word}, and \emph{labelled code word} will sometimes be used interchangeably to refer to the cyclic permutation class of primitive labelled code words when the context is clear.  For simplicity purposes, we use colours of different intensities when drawing strands in the split template (see \reffig{SplitTemplateWithLabelsWhole} for example) instead of labelling each strand with a labelled base. Note that the labelled bases and coloured bases describe the same concept, we will use them interchangeably depending on which presentation style is better. 
\end{enumerate}

Throughout this section, we use the period-$5$ word $x^{10}y^2x^5y^2x^7y^6x^2y^2x^5y^3$ to exemplify the algorithm in the figures. 
%%%%%%%%%%%%%%%%%%%%%%%%%%%%%%%%%%%%%%%%%%%%%%%%%%%%%%%%%%%%%%%%%%%%%%%%%%%%%%%%%%%%%%%%%
%%%%%%%%%%%%%%%%%%%%%%%%%%%%%%%%%%%%%%%%%%%%%%%%%%%%%%%%%%%%%%%%%%%%%%%%%%%%%%%%%%%%%%%%%
\subsection{The notion of bunches in a modular link} 
In the split template (see \reffig{SplitTemplateWithLabels}), draw a vertical dotted line between the $x$- and $y$-split lines. Mark the number zero at the intersection between the vertical dotted line and the branch line.  

Consider the branch line as part of a real line with the zero and assume each strand associating with a letter in the word $w = x_1^{k_1} y_1^{l_1} \ldots x_n^{k_n} y_n^{l_n}$ starts from the branch line. Note that each subword of the form $x_i^{k_i}$ corresponds to an arc that intersects the branch line $k_i$ times on the left of the zero. Recall that $k_{\mu}$ is a maximal $x$-exponent, we label the intersections between the $x_{\mu}^{k_{\mu}}$-arc and the branch line with $-k_{\mu}, \ldots, -1$. Similarly, we label the intersections between the $y_{\lambda}^{l_{\lambda}}$-arc and the branch line with $+1, \ldots, +l_{\lambda}$. These nonzero integers mark the starting ``centres'' of the bunches in the modular knot, a more precise definition is given later. \reffig{SplitTemplateWithLabels} shows an example: The maximal exponents are $k_{\mu}=k_1=10$ and $l_{\lambda}=l_3=6$. The negative (or positive) integers mark the intersection points between the ${x_1}^{10}$-arc (or ${y_3}^6$-arc) and the branch line.   

\begin{figure}
\import{figures/}{SplitTemplateWithLabels.pdf_tex}\vspace{6pt}
\import{figures/}{TemplateWithx10y6.pdf_tex}
\caption{Top: Split template with the ${x_1}^{10}$-arc and the ${y_3}^6$-arc for the labelled code word ${x_1}^{10}{y_1}^2{x_2}^5{y_2}^2{x_3}^7{y_3}^6{x_4}^2{y_4}^2{x_5}^5{y_5}^3$. Note that the ${x_1}^{10}$-arc has $10$ $x$-turns and the ${y_3}^6$-arc has $6$ $y$-turns.\\  Bottom: Lorenz template with the same arcs, obtained after gluing the split template.
\label{Fig:SplitTemplateWithLabels}}
\end{figure}

\begin{definition} \label{Def:(x_i,j)-turn}
    Let $w = x_1^{k_1} y_1^{l_1} \ldots x_n^{k_n} y_n^{l_n}$ be a labelled code word of a modular knot. Let $i\in[1,n]\cap\ZZ$. For any $j\in[1,k_i]\cap\ZZ$, an \emph{$(x_i, j)$-letter} denotes the $(k_1+l_1+\ldots + k_{i-1} + l_{i-1}+ j)^{\textup{th}}$ letter in $w$, and an \emph{$(x_i, j)$-turn} denotes the $x$-turn corresponding to the $(x_i, j)$-letter. 
    The terms \emph{$(y_i, j)$-letter} and \emph{$(y_i, j)$-turn} are defined similarly. If $w_s$ is a subword of $w$, we call the union of all turns corresponding to the letters in the subword $w_s$ the \emph{$w_s$-arc}. 
\end{definition} 

Take the period-$5$ code word $w=x^{10}y^2x^5y^2x^7y^6x^2y^2x^5y^3$ as an example. \reffig{SplitTemplateWithLabelsTurns} shows two turns for the first and the $11^{\textup{th}}$ letters of the word $w$. 

\begin{figure}
\import{figures/}{SplitTemplateWithLabelsTurns.pdf_tex}
\caption{The $(x_1,1)$-turn and $(y_1,1)$-turn of the labelled code word $w={x_1}^{10}{y_1}^2{x_2}^5{y_2}^2{x_3}^7{y_3}^6{x_4}^2{y_4}^2{x_5}^5{y_5}^3$. These turns correspond to the $1^{\textup{st}}$ and $11^{\textup{th}}$ letters in $w$ respectively. 
\label{Fig:SplitTemplateWithLabelsTurns}}
\end{figure}  

\begin{definition}
    For each integer $i\in\{-k_{\mu}, ..., +l_{\lambda}\}\setminus\{0\}$, we call the interval $(i-0.4, i+0.4)$ in the branch line the \emph{$i^{\textup{th}}$ interval}. When $i$ is negative, we call the interval an \emph{$x$-interval} or a \emph{negative interval}. When $i$ is positive, we call the interval a \emph{$y$-interval} or a \emph{positive interval}. 
\end{definition}

Denote the distinct cyclic permutation classes of labelled code words for a modular link $L$ with $c$ link components by the following: 
\begin{align*}  \label{Words} 
\begin{dcases}
    w_1 &= x_{1}^{k_{1}} y_{1}^{l_{1}} x_{2}^{k_{2}} y_{2}^{l_{21}} \ldots x_{n_1}^{k_{n_1}} y_{n_1}^{l_{n_1}}, \\ 
w_2 &= x_{n_1+1}^{k_{n_1+1}} y_{n_1+1}^{l_{n_1+1}} x_{n_1+2}^{k_{n_1+2}} y_{n_1+2}^{l_{n_1+2}} \ldots x_{n_1+n_2}^{k_{n_1+n_2}} y_{n_1+n_2}^{l_{n_1+n_2}},  \\ 
&\ldots \\ 
w_c &= x_{\Sigma(c) + 1}^{k_{\Sigma(c) + 1}} y_{\Sigma(c) + 1}^{l_{\Sigma(c) + 1}} x_{\Sigma(c) + 2}^{k_{\Sigma(c) + 2}} y_{\Sigma(c) + 2}^{l_{\Sigma(c) + 2}} \ldots x_{\overline{n}}^{k_{\overline{n}}} y_{\overline{n}}^{l_{\overline{n}}}, 
\end{dcases} \tag{*}
\end{align*} 
where $\Sigma(c)$ denotes the sum $\Sigma_{i=1}^{c-1} n_i$ of the word periods for words $w_1, \ldots, w_{c-1}$ if $c>1$; and $\overline{n}$ denotes the sum of all word periods for $L$, that is, $\overline{n} = \Sigma(c+1) = \Sigma_{i=1}^{c} n_i$. Let $\mu, \lambda\in [1,\overline{n}]\cap\ZZ$ such that $k_{\mu}$ and $l_{\lambda}$ are respectively a maximal $x$- and $y$-exponents among all words associated to $L$. 

Observe that there are exactly $\overline{n}$ overcrossing strands from the $x$-split line to the positive side of the branch line and exactly $\overline{n}$ undercrossing strands from the $y$-split line to the negative side of the branch line. As all overcrossing strands do not intersect, the intersection points between the $\overline{n}$ overcrossing strands and the $x$-split line are the rightmost $\overline{n}$ intersection points in the $x$-split line. We may assume all these $\overline{n}$ $x$-turns start within the $-1^{\textup{st}}$ interval $(-1-0.4,-1+0.4)$. Note that each of the $\overline{n}$ overcrossing strands corresponds to each of the $\overline{n}$ $x$-bases in the words associated with $L$.  Similar observations hold for the leftmost undercrossing strands from the $y$-split line to the negative side of the branch line. (See \reffig{SplitTemplateWithLabelsFullBs}.)

\begin{definition} \label{Def:FullBs&OrderWithin} Let $L$ be a modular link with words as denoted in (\ref{Words}).  
\begin{enumerate}
    \item \label{Def:FullBs} The \emph{full $x$-bunch} in the modular link $L$ is the ordered $\overline{n}$-tuple of the $\overline{n}$ rightmost $x$-turns in the Lorenz template, denoted by \[((x_{\sigma(1)},k_{\sigma(1)}),\ldots,(x_{\sigma(\overline{n})},k_{\sigma(\overline{n})})),\] from left to right in the split template. Similarly,  the \emph{full $y$-bunch} in $L$ is the ordered $\overline{n}$-tuple of the $\overline{n}$ leftmost $y$-turns, denoted by $((y_{\tau(1)},l_{\tau(1)}),\ldots,(y_{\tau(\overline{n})},l_{\tau(\overline{n})}))$, from left to right in the split template. 
    
    \item \label{Def:OrderWithin} The \emph{order within the full $x$-bunch} is the corresponding ordered $\overline{n}$-tuple $(x_{\sigma(1)}, \ldots, x_{\sigma(\overline{n})})$ of labelled $x$-bases. The \emph{order within the full $y$-bunch} is the corresponding ordered $\overline{n}$-tuple $(y_{\tau(1)}, \ldots, y_{\tau(\overline{n})})$ of labelled $y$-bases. 
\end{enumerate} 
\end{definition}  

\begin{remark}
    We use ordered tuples of labelled bases instead of just their subscripts in 
 Definition~\ref{Def:FullBs&OrderWithin}(\ref{Def:OrderWithin}) because the former is invariant under word cyclic permutations while the latter is not. The functions $\sigma$ and $\tau$ in Definition~\ref{Def:FullBs&OrderWithin} are permutations on the set $\{1, 2,\ldots, \overline{n}\}$. 
\end{remark}

Take the labelled (or equivalently, coloured) code word $w$ in \reffig{SplitTemplateWithLabelsFullBs} as an example. The blue dots in $-1^{\textup{st}}$ interval denote the start points of the five $x$-turns of different intensities (left to right: dark blue $x_2$, very dark blue $x_1$, pale blue $x_4$, very pale blue $x_5$, blue $x_3$), which form a full $x$-bunch $((x_2, 5),(x_1, 10),(x_4, 2),(x_5, 5),(x_3, 7))$ in the modular knot with word $w$. Similarly, the red dots in the $+1^{\textup{st}}$ interval denote the start points of the five $y$-turns, which form a full $y$-bunch $((y_5, 3),(y_2, 2),(y_1, 2),(y_4, 2),(y_3, 6))$. 

\begin{figure}
\import{figures/}{SplitTemplateWithLabelsFullBs.pdf_tex}
\caption{A full $x$-bunch (blue) and a full $y$-bunch (red) in the modular knot with labelled (or coloured) code word $w$.
\label{Fig:SplitTemplateWithLabelsFullBs}} 
\end{figure}

Observe that the start point of each turn of the full $x$-bunch in the $-1^{\textup{st}}$ interval marks the end of either an overcrossing strand or an undercrossing strand in the split template. We consider only the points that are the ends of the overcrossing strands, the ordered collection of such overcrossing strands (from left to right as appeared in the split template) is the list below: 
\begin{align*}
    (x_{21},k_{21}-1)\text{-turn}, \\
    (x_{22},k_{22}-1)\text{-turn}, \\
    \ldots,  \\
    (x_{2(b_{-2})},k_{2(b_{-2})}-1)\text{-turn},
\end{align*}
 where $(x_{21}, x_{22}, \ldots, x_{2(b_{-2})})$ is a subsequence of $(x_{\sigma(1)}, x_{\sigma(2)}, \ldots, x_{\sigma(\overline{n})})$. We call such list the \emph{negative second $x$-bunch}, denoted by an ordered $(b_{-2})$-tuple 
\[\bigg(\big(x_{21}, k_{21}-1\big),\ldots, \big(x_{2(b_{-2})}, k_{2(b_{-2})}-1\big)\bigg)\] 
of ordered pairs. 
Without loss of generality, we assume all the turns in the $-2^{\textup{nd}}$ $x$-bunch start from the  $-2^{\textup{nd}}$ interval $(-2-0.4,-2+0.4)$. Note that there are two indices in the subscript of each letter $x$, the first one, which is $2$ in this case, corresponds to the label of bunch or interval.  

Now consider the start points of turns in the $-2^{\textup{nd}}$ interval. Some of them mark the ends of some overcrossing strand(s) in the split template. The ordered collection of such overcrossing strands (from left to right as appeared in the split template) is the list of the following turns: 
\begin{align*}
    (x_{31},k_{31}-2)\text{-turn}, \\
    (x_{32},k_{32}-2)\text{-turn}, \\
    \ldots, \\
    (x_{3(b_{-3})},k_{3(b_{-3})}-2))\text{-turn},
\end{align*}
 where $(x_{31}, x_{32}, x_{33}, \ldots, x_{3(b_{-3})})$ is a subsequence of $(x_{21}, x_{22}, x_{23}, \ldots, x_{2(b_{-2})})$. We call such list the \emph{negative third $x$-bunch}, denoted by an ordered $(b_{-3})$-tuple 
\[\bigg(\big(x_{31}, k_{31}-2\big),\ldots, \big(x_{3(b_{-3})}, k_{3(b_{-3})}-2\big)\bigg)\] 
of ordered pairs. Without loss of generality, we assume that all the turns in the $-3^{\textup{rd}}$ $x$-bunch start from the $-3^{\textup{rd}}$ interval $(-3-0.4,-3+0.4)$.  

Inductively, we can define the $u^{\textup{th}}$ $x$-bunch for any $u\in [-k_{\mu},-1]\cap\ZZ$; and assume all turn(s) in the $u^{\textup{th}}$ $x$-bunch start(s) from the $u^{\textup{th}}$ interval. Here comes the more precise definition of bunches:  

\begin{definition}[Bunches in a modular link] \label{Def:Bunches&OrderWithin} Let $L$ be a modular link with labelled code words denoted in (\ref{Words}).  Let $(x_{\sigma(1)}, \ldots, x_{\sigma(\overline{n})})$ and $(y_{\tau(1)}, \ldots, y_{\tau(\overline{n})})$ be orders within the full $x$-bunch and full $y$-bunch respectively. 

\begin{enumerate}
    \item  \label{Def:xBunches&OrderWithin}  For any $u\in [1, k_{\mu}]\cap\ZZ$, the \emph{$-u^{\textup{th}}$ $x$-bunch} in $L$ is the ordered $(b_{-u})$-tuple 
    \[\bigg(\big(x_{u1},k_{u1}-u+1),\ldots,(x_{u(b_{-u})},k_{u(b_{-u})}-u+1\big)\bigg)\] 
    of $x$-turns starting from the $-u^{\textup{th}}$ interval, where the \emph{order within the $x$-bunch} $(x_{u1},  \ldots, x_{u(b_{-u})})$ is a subsequence of $(x_{\sigma(1)}, \ldots, x_{\sigma(\overline{n})})$. 
    \item  \label{Def:yBunches&OrderWithin}  For any $v\in [1,l_{\lambda}]\cap\ZZ$, the \emph{$+v^{\textup{th}}$ $y$-bunch} in $L$ is the ordered $(b_{+v})$-tuple 
    \[((y_{v1},l_{v1}-v+1),\ldots,(y_{v(b_{+v})},l_{v(b_{+v})}-v+1))\] 
    of $y$-turns starting from the $+v^{\textup{th}}$ interval, where the \emph{order within the $y$-bunch} $(y_{v1},  \ldots, y_{v(b_{+v})})$ is a subsequence of $(y_{\tau(1)}, \ldots, y_{\tau(\overline{n})})$. 
\end{enumerate} 
A \emph{bunch} in a modular link $L$ is either an $x$-bunch or a $y$-bunch in $L$.  The union of all entries in a bunch is said to be a \emph{bunch of turns}. 
\end{definition} 

\reffig{SplitTemplateWithLabelsWhole} illustrates a modular knot with ten $x$-bunches and six $y$-bunches. Note that each $x$-bunch (or $y$-bunch) has exactly one $x_{\mu}$-turn (or $y_{\lambda}$-turn). 

\begin{figure}
\import{figures/}{SplitTemplateWithLabelsWhole.pdf_tex} 
\caption{A modular knot with coloured (or equivalently, labelled) code word $w$ in the split template. Each turn in the split template has the same colour as the letter in the word. This knot has 10 $x$-bunches and 6 $y$-bunches, where 10 is the maximal $x$-exponent and 6 is the maximal $y$-exponent in the code word.  
\label{Fig:SplitTemplateWithLabelsWhole}}
\end{figure}   

The following is the first main result. 

\begin{theorem} \label{Thm:UnionBunchesOfTurns}
Each modular link $L$ with labelled code words denoted in (\ref{Words}) is ambient isotopic to a union of bunches of turns that are ordered from left to right in the split template according to the order of turns listed in each bunch, and the union of bunches satisfy both conditions below:
\begin{enumerate}
    \item For any $i\in [1,\overline{n}]\cap\ZZ$, the $x_i^{k_i}$-arc starts from a point in the $-{k_i}^{\textup{th}}$ interval. 
    \item For any $j\in [1,\overline{n}]\cap\ZZ$, the $y_j^{l_j}$-arc starts from a point in the $+{l_j}^{\textup{th}}$ interval. 
\end{enumerate}
\end{theorem}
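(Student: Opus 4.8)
The plan is to reduce the statement to the standard symbolic dynamics of the Lorenz template and then to an induction that mirrors the inductive construction of bunches in \refdef{Bunches&OrderWithin}. The key tool I would invoke is the classical fact (due to Williams, and already implicit in the Birman--Williams correspondence cited in the introduction) that $L$ meets the branch line in exactly one point at the start of each turn, and that the left-to-right order of these points along the branch line is the lexicographic order of their forward itineraries, read cyclically off the words in (\ref{Words}) under the convention $x<y$. In particular every $x$-turn starts to the left of the zero mark and every $y$-turn to its right, which recovers the two-sided picture set up just before \refdef{FullBs&OrderWithin}.

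First I would record the itinerary of each turn. The start point of the $(x_i,m)$-turn reads off the word beginning at the $m^{\textup{th}}$ letter of the block $x_i^{k_i}$, so its forward itinerary begins with exactly $k_i-m+1$ copies of $x$ followed by a $y$; symmetrically the $(y_j,m)$-turn has exactly $l_j-m+1$ leading $y$'s followed by an $x$. A short lexicographic comparison then shows that, among the $x$-turns, the start points with exactly $u$ leading $x$'s form a contiguous block of the branch line lying to the left of every block with fewer leading $x$'s and to the right of every block with more; the same holds on the positive side with the roles of $x$ and $y$ interchanged. These contiguous blocks are exactly the intervals of the statement, and I would rescale the branch line by an order-preserving homeomorphism so that the class with $u$ leading $x$'s sits in $(-u-0.4,-u+0.4)$ and the class with $v$ leading $y$'s in $(v-0.4,v+0.4)$.

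With this in hand the combinatorial skeleton follows by induction on $u$, matching the inductive definition of the $-u^{\textup{th}}$ $x$-bunch. The base case $u=1$ is the content of the paragraph preceding \refdef{FullBs&OrderWithin}: the $\overline{n}$ innermost $x$-turns, one per block and each transitioning to a $y$-turn, are precisely the $\overline{n}$ rightmost $x$-turns, namely the turns $(x_i,k_i)$, which have exactly one leading $x$. For the inductive step, the start point of a turn $(x_i,k_i-u+1)$ in the $-u^{\textup{th}}$ bunch is the end of the preceding turn, which is the overcrossing $x$-turn $(x_i,k_i-u)$ when $k_i>u$ and the undercrossing $y$-turn $(y_{i-1},l_{i-1})$ when $k_i=u$. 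Thus exactly the blocks with $k_i\geq u+1$ continue, each contributing $(x_i,k_i-u)=(x_i,k_i-(u+1)+1)$ to the $-(u+1)^{\textup{st}}$ bunch, and by the itinerary count these new start points all have $u+1$ leading $x$'s and hence lie in the $-(u+1)^{\textup{st}}$ interval. Setting $u=k_i$ shows that the first turn $(x_i,1)$, and therefore the whole $x_i^{k_i}$-arc, starts in the $-k_i^{\textup{th}}$ interval, which is condition~(1); the symmetric argument on the positive side gives condition~(2).

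It remains to promote this branch-line normal form to an ambient isotopy and to check the within-bunch orders. Within each interval the left-to-right order of start points is, by construction, the lexicographic order of the corresponding itineraries, which is exactly the order recorded by $\sigma$ and $\tau$ in \refdef{FullBs&OrderWithin} and \refdef{Bunches&OrderWithin}, so the turns appear in the claimed order. Because all overcrossing strands are mutually disjoint and all undercrossing strands are mutually disjoint, an order-preserving reparametrisation of the branch line drags the strands without creating or destroying crossings and extends to an ambient isotopy of the template, hence of $\SS^3$, carrying $L$ to the union of bunches in standard position. I expect the main obstacle to be precisely this last point: justifying rigorously that sliding the branch-line intersection points, subject only to preserving their linear order, is realised by an ambient isotopy supported near the template. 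This is where the branched-surface structure and the fixed over/under convention of the two ears must be used, and where one must confirm that the interval-placement isotopies for all bunches can be performed simultaneously and compatibly.
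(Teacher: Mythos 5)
Your proposal is correct, but it proves \refthm{UnionBunchesOfTurns} by a genuinely different route than the paper. You anchor everything in the classical Williams/Birman--Williams fact that the branch-line intersection points of a Lorenz link are ordered, left to right, lexicographically (with $x<y$) by their forward itineraries: the interval placements in conditions (1) and (2) then follow from the simple count that the $(x_i,j)$-turn has exactly $k_i-j+1$ leading $x$'s, and the within-bunch orders are obtained by restricting the lexicographic order (dropping the common leading letters shows each bunch order is the restriction of the full-bunch order, hence a subsequence of it). The paper never invokes symbolic dynamics: its proof is a self-contained induction on the bunch index, in which $L$ is ambient-isotoped stage by stage (all $x$-turns ending in the $-m^{\textup{th}}$ interval are pushed so as to start in the $-(m+1)^{\textup{th}}$ interval), and the subsequence property of the orders within bunches is deduced purely from planarity of the template: overcrossing strands are disjoint arcs in a two-dimensional branched surface, so their left-to-right order cannot permute from one interval to the next. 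In other words, the paper \emph{realizes} the bunch decomposition of \refdef{Bunches&OrderWithin} by explicit isotopies, whereas you \emph{identify} it inside Williams' pre-existing lexicographic normal form. Your route makes conditions (1) and (2) nearly immediate and makes the relation to Williams' algorithm transparent, but it imports precisely the machinery (ordering all $\Sigma_i(k_i+l_i)$ cyclic shifts of the words) that the bunch formalism is designed to bypass, and it requires the cited lexicographic fact in its two-sided, multi-component form. The gap you flag at the end --- that an order-preserving rearrangement of branch-line points extends to an ambient isotopy supported near the template --- is genuine but appears at exactly the same level of rigor in the paper's own proof, which asserts its stage-by-stage isotopies without further justification; so it does not disadvantage your approach relative to the paper's.
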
 

\begin{proof} 
Observe that any modular link embedded in the Lorenz template can be ambient-isotoped to a position such that all of its intersections with the branch line lie in the union of all positive and negative intervals, we may assume all intersection points between $L$ and the branch line lie in either positive or negative intervals. Since each word for $L$ contains both $x$ and $y$, each $x$-turn starts from a negative real number in the branch line and ends at a strictly larger real number.  Similarly, each $y$-turn starts from a positive real number and ends at a strictly smaller real number. 

The following argument focuses on explaining why we can view collections of overcrossing strands (or more precisely, collections of $x$-turns) as $x$-bunches. The case for $y$-bunches follows similarly.  

Let $((x_{\sigma(1)},k_{\sigma(1)}),\ldots,(x_{\sigma(\overline{n})},k_{\sigma(\overline{n})}))$ be the full $x$-bunch in $L$, i.e., the $-1^{\textup{st}}$ bunch in $L$. Note that each of the start points in the $-1^{\textup{st}}$ bunch is the end of either an overcrossing strand (i.e., an $x$-turn) or an undercrossing strand (i.e., a $y$-turn). If there are overcrossing strands that end at the $-1^{\textup{st}}$ interval, we may ambient-isotope $L$ such that all these overcrossing strands start from the $-2^{\textup{nd}}$ interval. 

Since all overcrossing strands are pairwise non-intersecting and the template where the modular link is embedded is a two-dimensional object, the order of bases corresponding to all $x$-turns (from left to right as appeared in the split template) starting from the $-2^{\textup{nd}}$ interval 
is a subsequence of the order within the full $x$-bunch. Thus, by Definition~\ref{Def:Bunches&OrderWithin}(\ref{Def:xBunches&OrderWithin}), the ordered tuple of those $x$-turns is a $-2^{\textup{nd}}$ $x$-bunch, which can be denoted by  \[\bigg(\big(x_{21}, k_{21}-1\big), \big(x_{22}, k_{22}-1\big),\ldots,\big(x_{2(b_{-2})}, k_{2(b_{-2})}-1\big)\bigg), \] 
where $(x_{21}, x_{22}, \ldots, x_{2(b_{-2})})$ is a subsequence of $(x_{\sigma(1)}, x_{\sigma(2)}, \ldots, x_{\sigma(\overline{n})})$.  

Inductively, for any $m\in [1, k_{\mu}-1]\cap\ZZ$, we can ambient-isotope  $L$ (without affecting the already defined bunches) such that all the $x$-turns that end 
at the $-m^{\textup{th}}$ interval are starting from the $-(m+1)^{\textup{th}}$ interval.  The ordered tuple of those $x$-turns (from left to right as appeared in the split template) is a $-(m+1)^{\textup{th}}$ $x$-bunch, which can be denoted by an ordered $(b_{-(m+1)})$-tuple  
\[\bigg(\big(x_{(m+1)1}, k_{(m+1)1}-m\big),\ldots,\big(x_{(m+1)(b_{-(m+1)})}, k_{(m+1)(b_{-(m+1)})}-m\big)\bigg), \]
where the order within the $-(m+1)^{\textup{th}}$ bunch is a subsequence of the order within the $-m^{\textup{th}}$ bunch. The order within the $-(m+1)^{\textup{th}}$ bunch is thus a subsequence of the order within the full $x$-bunch $(x_{\sigma(1)}, x_{\sigma(2)}, \ldots, x_{\sigma(\overline{n})})$.  

Similarly, we can view collections of undercrossing strands as $y$-bunches. Note that the order within each $y$-bunch describes the order of $y$-turns from left to right as appeared in the split template. 

Since the second entries of the ordered pairs, namely, 
\[k_{(m+1)1}-m, \ldots, k_{(m+1)(b_{-(m+1)})}-m,\] 
are positive integers by \refdef{(x_i,j)-turn}, it follows that for any $i\in [1,\overline{n}]\cap\ZZ$, the $x_i^{k_i}$-arc starts from a point in the $-k_i^{\textup{th}}$ interval. 
Similarly, for any $j\in [1,\overline{n}]\cap\ZZ$, the $y_j^{l_j}$-arc starts from a point in the $+l_j^{\textup{th}}$ interval. 
\end{proof}

\reffig{SplitTemplateWithLabelsWhole} shows a modular knot with code word $x^{10}y^2x^5y^2x^7y^6x^2y^2x^5y^3$ in the split template. The start point of the $x_3^7$-arc is the rightmost blue dot in the $-7^{\textup{th}}$ interval. The start point of the $y_4^2$-arc is the pale red dot (the second rightmost dot) in the $+2^{\textup{nd}}$ interval. 

%%%%%%%%%%%%%%%%%%%%%%%%%%%%%%%%%%%%%%%%%%%%%%%%%%%%%%%%%%%%%%%%%%%%%%%%%%%%%%%%%%%%%%%%%
%%%%%%%%%%%%%%%%%%%%%%%%%%%%%%%%%%%%%%%%%%%%%%%%%%%%%%%%%%%%%%%%%%%%%%%%%%%%%%%%%%%%%%%%%
\subsection{Order within a full bunch} 
If we colour the subwords $x_1^{k_1}, \ldots, x_{\overline{n}}^{k_{\overline{n}}}$ of the labelled code words denoted in (\ref{Words}) and their corresponding unions of $x$-turns with decreasing intensity of blue (see \reffig{SplitTemplateWithLabelsWhole} for example of a modular knot with $\overline{n}=5$), we can observe that the full $x$-bunch consists of blue strands of $\overline{n}$ different intensities and are permuted in a way that is not necessarily a decreasing order of intensities. Similar observations hold if we colour the subwords $y_1^{l_1}, \ldots, y_{\overline{n}}^{l_{\overline{n}}}$ and their corresponding collections of $y$-turns with decreasing intensity of red. 

Now a question arises: How do we determine the colour intensity orders for the blue and red strands in the full bunches? Or equivalently, how do we determine the order within the full $x$-bunch and the order within the full $y$-bunch (see Definition ~\ref{Def:FullBs&OrderWithin}(\ref{Def:OrderWithin}))?  It turns out such orders depend on the descending order of the $x$-exponents and/or the ascending order of the $y$-exponents, a more precise statement is \refprop{OrderWithinFullBs}, the proof of which is followed by an example of finding order within full bunches. 

From now on, unless otherwise specified, we assume the following: 
\begin{itemize} 
    \item All subscripts of bases and exponents of the word $w_1$ in (\ref{Words}) are living in the cyclic group $\ZZ_{n_1} = (\{1, \ldots, n_1\}, +)$. 
    \item If the number $c$ of link components of the modular link $L$ is greater than one, then for any $r\in [2,c]\cap\ZZ$, all subscripts of bases and exponents of the word $w_r$ in (\ref{Words}) are living in the cyclic group $\ZZ_{n_r}~=~(\{\Sigma(r)+1, \ldots, \Sigma(r)+n_r\}, +)$, where $\Sigma(r)\coloneqq \Sigma_{i=1}^{r-1} n_i$. 
\end{itemize}

\begin{proposition} \label{Prop:OrderWithinFullBs}
        The order within a full $x$-bunch in a modular link $L$ with labelled code words denoted in (\ref{Words})
% $$w_i = x_{\sum_{j=1}^{i-1} n_j+1}^{k_{\sum_{j=1}^{i-1} n_j+1}} y_{\sum_{j=1}^{i-1} n_j+1}^{l_{\sum_{j=1}^{i-1} n_j+1}} x_{\sum_{j=1}^{i-1} n_j+2}^{k_{\sum_{j=1}^{i-1} n_j+2}} y_{\sum_{j=1}^{i-1} n_j+2}^{l_{\sum_{j=1}^{i-1} n_j+2}} \ldots x_{\sum_{j=1}^{i} n_j}^{k_{\sum_{j=1}^{i} n_j}} y_{\sum_{j=1}^{i} n_j}^{l_{\sum_{j=1}^{i} n_j}}, $$ 
%for $i=1,...,n$ 
can be determined as follows: 
        \begin{enumerate}[noitemsep]
            \item \label{Case&Statement:n=1} If $\overline{n}=1$, then the order within the full $x$-bunch is $(x_1)$.
            \item \label{Case:n>1} Suppose $\overline{n}>1$. 
            \begin{enumerate}
                \item \label{Statement:AllDistinct} If the $y$-exponents $l_1, l_2, \ldots, l_{\overline{n}}$ are all distinct and their ascending order is \[(l_{\sigma(1)}, l_{\sigma(2)}, \ldots, l_{\sigma(\overline{n})})\] 
                for some permutation map $\sigma \from \{1,\ldots,\overline{n}\} \to \{1,\ldots,\overline{n}\}$, then the order within the full $x$-bunch is $(x_{\sigma(1)}, x_{\sigma(2)}, \ldots, x_{\sigma(\overline{n})})$. 
             
                \item \label{Statement:SomeSame}  If the $y$-exponents $l_1, l_2, \ldots, l_{\overline{n}}$ are not all distinct, then the steps below determine the order within the full $x$-bunch. 

                \begin{enumerate}
                    \item Order as many labelled bases $x_{1}, x_{2}, \ldots, x_{\overline{n}}$ as possible according to an \textbf{ascending} order of $l_1, l_2, \ldots, l_{\overline{n}}$.  
                    \item For the $l_i$'s that equal to the same integer, order the corresponding $x_i$'s as many as possible according to a \textbf{descending} order of the corresponding $k_{i+1}$'s. 
                    \item If two or more $k_{i+1}$'s equal to the same integer, order the corresponding $x_i$'s as many as possible according to an \textbf{ascending} order of the corresponding $l_{i+1}$'s. 
                    \item If two or more $l_{i+1}$'s equal to the same integer, order the corresponding $x_i$'s as many as possible according to a \textbf{descending} order of the corresponding $k_{i+2}$'s. 
                \end{enumerate}

                By repeating the steps recursively and finitely many times, all labelled $x$-bases can be ordered, and such ordered $\overline{n}$-tuple is the order within the full $x$-bunch. 
            \end{enumerate}
            
        \end{enumerate}

        Similarly, the order within the full $y$-bunch can be determined by ordering the labelled $y$-bases according to descending order(s) of $x$-exponents and ascending order(s) of $y$-exponents. 
\end{proposition}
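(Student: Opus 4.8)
The plan is to reduce the combinatorial ordering rule to the standard kneading order on the branch line of the Lorenz template. The single structural input I would invoke is the following ordering principle: because the two return-map branches of the template are orientation-preserving (increasing) and the overcrossing strands are pairwise disjoint in the planar branched surface, the left-to-right order of points on the branch line coincides with the lexicographic order of their forward itineraries, under the convention $x<y$ coming from the $x$-region lying to the left of $0$. I would justify this directly: if $p<q$ agree in itinerary up to the first time $m$ they differ, then increasingness on each branch propagates $p<q$ to $f^m(p)<f^m(q)$, forcing $f^m(p)$ into the left ($x$) region and $f^m(q)$ into the right ($y$) region, so $\mathrm{itin}(p)<_{\mathrm{lex}}\mathrm{itin}(q)$ with $x<y$. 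This is the cutting-sequence/kneading order of the Lorenz symbolic dynamics of Williams and Birman--Williams, and all the remaining work becomes reading off and comparing itineraries.

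First I would pin down, for each $i$, the unique turn of the arc $x_i^{k_i}$ lying in the full $x$-bunch: by \refthm{UnionBunchesOfTurns} this is the turn $(x_i,k_i)$, whose start point $p_i$ lies in the $-1^{\textup{st}}$ interval. Reading the word forward from the letter $(x_i,k_i)$, cyclically within the component containing $x_i$ (using the cyclic indexing in $\ZZ_{n_r}$), the forward itinerary of $p_i$ is the periodic sequence
\[
x\, y^{l_i}\, x^{k_{i+1}} y^{l_{i+1}}\, x^{k_{i+2}} y^{l_{i+2}} \cdots.
\]
Since the turns of the full $x$-bunch all start in the $-1^{\textup{st}}$ interval and do not cross, their left-to-right order equals the left-to-right order of the points $p_i$, which by the ordering principle is the lexicographic order of these itineraries.

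Next I would perform the lexicographic comparison and match it to the recursion. Every itinerary begins with $x$, so I strip it and compare the tails $y^{l_i} x^{k_{i+1}} y^{l_{i+1}}\cdots$. The leading block is a run of $y$'s of length $l_i$ terminated by an $x$; since $x<y$, a shorter run is lexicographically smaller, giving the primary key \emph{ascending $l_i$} (this also covers the all-distinct case). When $l_i=l_j$ the comparison passes to the block $x^{k_{i+1}}$: a longer $x$-run delays the first subsequent $y$, so a larger $k_{i+1}$ is lexicographically smaller, yielding \emph{descending $k_{i+1}$}; a further tie passes to $y^{l_{i+1}}$, giving \emph{ascending $l_{i+1}$}, and so on, alternating exactly as in the four recursive steps. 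The case $\overline{n}=1$ is immediate, the bunch being the single entry $(x_1)$. The full $y$-bunch statement follows from the symmetric computation: the start point $q_j$ of $(y_j,l_j)$ has itinerary $y\, x^{k_{j+1}} y^{l_{j+1}} x^{k_{j+2}}\cdots$, whose lexicographic comparison gives \emph{descending $x$-exponents} followed by \emph{ascending $y$-exponents}, alternating.

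The step I expect to require the most care is the termination and well-definedness of the recursion: I must show that distinct bases yield distinct (not merely comparable) itineraries, so that the lexicographic comparison resolves after finitely many blocks. This is exactly where aperiodicity enters. Each $w_r$ is primitive and the cyclic permutation classes of $L$ are distinct, so two distinct start points $p_i\ne p_j$ cannot share the same periodic itinerary, for otherwise some $w_r$ would admit a nontrivial cyclic symmetry, or two components of $L$ would represent the same cyclic class. Hence the alternating comparison strictly decides every pair in finitely many steps, the recursive procedure halts and returns a total order, and that order is the order within the full $x$-bunch. A secondary point worth stating explicitly in the write-up is the justification of the ordering principle itself (orientation-preserving branches plus disjointness of strands), since every subsequent comparison rests on it.
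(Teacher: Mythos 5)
Your proposal is correct, but it takes a recognizably different route from the paper's own proof. You first establish a global kneading-order principle --- that the left-to-right order of points on the branch line coincides with the lexicographic order (with $x<y$) of their forward itineraries under the first-return map --- and then obtain the alternating ascending/descending rules of \refprop{OrderWithinFullBs} by block-wise comparison of the periodic itineraries $x\,y^{l_i}x^{k_{i+1}}y^{l_{i+1}}\cdots$, using \refthm{UnionBunchesOfTurns} only to identify the representative turn of each base in the full bunch. The paper never passes to infinite itineraries: its proof stays inside the bunch formalism, arguing that non-crossing turns preserve left-to-right order from their start points to their end points, then reading off landing positions arc by arc from \refthm{UnionBunchesOfTurns} (each $y_j^{l_j}$-arc starts in the $+l_j^{\textup{th}}$ interval, each $x_{i+1}^{k_{i+1}}$-arc in the $-k_{i+1}^{\textup{th}}$ interval), with ties propagated by tracing to the next arc. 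The two arguments rest on the same geometric fact (monotone, non-crossing strands), but your key lemma is the classical cutting-sequence/kneading order of Williams and Birman--Williams --- precisely the machinery the paper is trying to bypass --- whereas the paper's key lemma is its own Theorem 3.8. What your route buys: a single uniform principle covering all cases at once, and a cleaner termination argument (distinct bases have distinct periodic itineraries by primitivity and distinctness of cyclic classes, so every comparison resolves in finitely many blocks); the paper's corresponding step is the terser ``primitive and finite'' remark. What the paper's route buys: a finite, purely geometric induction that is self-contained in the bunch framework and consistent with its stated goal of replacing lexicographic bookkeeping; note also that your reduction, since it only compares $\overline{n}$ itineraries block-wise, does not reintroduce the $\Sigma_{i}(k_i+l_i)$-item ordering of Williams' algorithm, so it does not conflict with the paper's efficiency claims.
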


\begin{proof}
Statement~(\ref{Case&Statement:n=1}) holds because the link is a modular knot with period $1$ and the full $x$-bunch consists of only one $x$-turn.  

Suppose $\overline{n}>1$. Recall that the $x$-turns in the full $x$-bunch are the only $x$-turns that start from a negative interval and end at a positive interval. The ends of all these $x$-turns are the starts of the $y$-turns that immediately follow. Hence, the order of the $x$-turns in the full $x$-bunch is determined by the order of the starting points of $y$-turns that immediately follow.  

By \refthm{UnionBunchesOfTurns}, each $y_j^{l_j}$-arc starts from a point in the $+{l_j}^{\textup{th}}$ interval. Hence the ascending order of $l_1, l_2, \ldots, l_n$ determines the order of the $x$-turns in the full $x$-bunch (from left to right) in the split template if $l_1, l_2, \ldots, l_n$ are all distinct.  Thus, Statement~(\ref{Statement:AllDistinct}) holds.  

If two or more $l_i$'s are the same integer such that the order of the corresponding $x_i$-turns cannot be determined,  we can trace through the corresponding $y_i^{l_i}$-arcs, whose ends mark the starting points of the $x_{i+1}$-turns that immediately follow the arcs.  By \refthm{UnionBunchesOfTurns}, each $x_{i+1}^{k_{i+1}}$-arc starts from a point in the $-{k_{i+1}}^{\textup{th}}$ interval. Hence, if all such $k_{i+1}$'s are distinct, we can order the corresponding $x_i$-turns (from left to right in the split template) according to the descending order of those $k_{i+1}$'s. Otherwise, we trace through the $x_{i+1}^{k_{i+1}}$-arcs that have the same exponent and repeat similar ordering processes. 

As the word is primitive and finite, the order of all labelled $x$-bases in the full $x$-bunch can be determined after finitely many steps. 

The proof for the order within the full $y$-bunch follows similarly. 
\end{proof} 

Take the labelled code word $w={x_1}^{10}{y_1}^2{x_2}^5{y_2}^2{x_3}^7{y_3}^6{x_4}^2{y_4}^2{x_5}^5{y_5}^3$ in \reffig{SplitTemplateWithLabelsWhole} as an example.  To determine the order within the full $x$-bunch, we go through the following steps. 
\begin{enumerate}
    \item[1.] One possible choice of the ascending order is $(l_1, l_2, l_4, l_5, l_3)=(2,2,2,3,6)$, we can then order as many $x$-bases as possible \[(\{x_1, x_2, x_4\}, x_5, x_3),\] where the order of $x_1, x_2, x_4$ is not known at this stage. 
    \item[2.]  A descending order of $k_{1+1}, k_{2+1}, k_{4+1}$ is $(k_{2+1},k_{1+1},k_{4+1})$ $=(7,5,5)$. Further order as many $x$-bases in the above curly brackets as possible: \[(x_2, \{x_1, x_4\}, x_5, x_3),\] where the order of $x_1, x_4$ is not known at this stage. 
    \item[3.]  The ascending order of $l_{1+1}, l_{4+1}$ is $(l_{1+1}, l_{4+1})=(2,3)$, where there are no identical exponents. The order within the full $x$-bunch is thus \[(x_2, x_1, x_4, x_5, x_3).\] 
\end{enumerate} 

Similarly, to determine the order within the full $y$-bunch, we look at the descending order $(k_1, k_3, k_2, k_5, k_4)=(10,7,5,5,2)$ to obtain $(y_5, y_2, \{y_1, y_4\}, y_3)$ and then from the ascending order $(l_2,l_5)=(2,3)$, we can conclude the order within the full $y$-bunch is $(y_5, y_2, y_1, y_4, y_3)$.  
 
%%%%%%%%%%%%%%%%%%%%%%%%%%%%%%%%%%%%%%%%%%%%%%%%%%%%%%%%%%%%%%%%%%%%%%%%%%%%%%%%%%%%%%%%%
%%%%%%%%%%%%%%%%%%%%%%%%%%%%%%%%%%%%%%%%%%%%%%%%%%%%%%%%%%%%%%%%%%%%%%%%%%%%%%%%%%%%%%%%%
\subsection{The bunch algorithm for constructing modular links} \label{Sec:Algorithm}

The following corollary follows from \refthm{UnionBunchesOfTurns}. 

\begin{corollary}[The bunch algorithm] \label{Cor:Algorithm}
Let $L$ be a modular link with labelled code words denoted in (\ref{Words}). Recall that $\overline{n}$ denote the sum of all word periods associated with $L$. Suppose $K$ is a link component of $L$ with labelled code word $w_1 = x_1^{k_1} y_1^{l_1} \ldots x_{n_1}^{k_{n_1}} y_{n_1}^{l_{n_1}}$. The link component $K$ can be drawn in the split template using the following steps: 
\begin{enumerate}
    \item Mark the branch line with integers $-k_{\mu}, \ldots, +l_{\lambda}$, where $k_{\mu}$ and $l_{\lambda}$ are a maximal $x$- and $y$-exponents among all words associated to $L$ respectively. 
    
    \item Determine the orders within the full bunches in the modular link $L$ using \refprop{OrderWithinFullBs}. In each negative (or positive) interval, draw $\overline{n}$ points with $x$-base (or $y$-base) labels from left to right according to the order within the full $x$-bunch (or full $y$-bunch). 
    
    \item Construct $K$ by first drawing the $x_1^{k_1}$-arc, which starts from the $x_1$-point in the $-{k_1}^{\textup{th}}$ interval and ends at the $y_1$-point in the $+{l_1}^{\textup{th}}$ interval. Then, draw the $y_1^{l_1}$-arc which starts from the $y_1$-point in the $+{l_1}^{\textup{th}}$ interval and ends at the $x_2$-point in the $-{k_2}^{\textup{th}}$ interval. Continue drawing the arcs for subwords $x_2^{k_2}, y_2^{l_2}, \ldots, x_{n_1}^{k_{n_1}}, y_{n_1}^{l_{n_1}}$ with known starting and ending points to obtain $K$ in the split template. 
\end{enumerate}
    
By repeating Step (3) for all other link components of $L$ with code words $w_2, w_3, \ldots, w_c$, the modular link (or called Lorenz link) $L$ embedded in the Lorenz template can be obtained. 
\end{corollary}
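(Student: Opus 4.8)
The plan is to read the algorithm off directly from \refthm{UnionBunchesOfTurns} and \refprop{OrderWithinFullBs}, checking at each step that the prescribed data are well-defined and that they reproduce the ambient isotopy class already guaranteed by the theorem. First I would record that \refthm{UnionBunchesOfTurns} places $L$, up to ambient isotopy, as a union of bunches of turns in which each $x_i^{k_i}$-arc starts in the $-k_i^{\textup{th}}$ interval and each $y_j^{l_j}$-arc starts in the $+l_j^{\textup{th}}$ interval. Step~(1) is then nothing more than fixing the integer coordinates on the branch line that this statement presupposes, so there is nothing to prove beyond citing the theorem's set-up.

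For Step~(2), I would invoke \refprop{OrderWithinFullBs} to compute the orders within the full $x$- and $y$-bunches, and then use the structural fact established inside the proof of \refthm{UnionBunchesOfTurns}: for every $u$, the order within the $-u^{\textup{th}}$ $x$-bunch is a subsequence of the order within the full $x$-bunch, and symmetrically for $y$. Consequently, if one lists the labelled bases in each interval according to the single full-bunch order and then retains only those arcs that actually reach that interval --- namely the $x_i$ with $k_i \geq u$ in the $-u^{\textup{th}}$ interval --- the resulting left-to-right arrangement is exactly the order of turns dictated by the theorem. This is what legitimises recording the point positions in every interval from one full-bunch order rather than recomputing a fresh order interval by interval.

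For Step~(3), I would verify that consecutive arcs share endpoints, so that the prescribed starting and ending points chain into a single closed curve. Since the letters of $w$ are read cyclically and each turn is a half-open strand, the last turn of the $x_i^{k_i}$-arc is immediately followed by the first turn of the $y_i^{l_i}$-arc, forcing their common endpoint; by condition~(2) of \refthm{UnionBunchesOfTurns} this endpoint is the $y_i$-point in the $+l_i^{\textup{th}}$ interval, and symmetrically the $y_i^{l_i}$-arc terminates at the $x_{i+1}$-point in the $-k_{i+1}^{\textup{th}}$ interval. The intermediate turns are pinned down by the bunch structure: for $j<k_i$ the $(x_i,j)$-turn runs from the $-(k_i-j+1)^{\textup{th}}$ interval to the $-(k_i-j)^{\textup{th}}$ interval, so the $x_i^{k_i}$-arc sweeps monotonically through $-k_i, -(k_i-1), \ldots, -1$ and then exits to the positive side. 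Thus each arc is determined once its endpoints from Step~(2) are known, and concatenating the arcs for $x_1^{k_1}, y_1^{l_1}, \ldots, x_n^{k_n}, y_n^{l_n}$ yields $K$; repeating over the components $w_1, \ldots, w_c$ recovers $L$.

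The step I expect to require the most care is Step~(2): one must ensure that ``drawing the points in full-bunch order'' is genuinely consistent across all intervals, i.e.\ that an arc's relative position never changes as it passes from one interval to the next. This is precisely the subsequence property from \refthm{UnionBunchesOfTurns}, which in turn encodes that the overcrossing strands are pairwise non-intersecting on the two-dimensional template. Once that property is cited, the remaining verifications in Steps~(1) and~(3) reduce to bookkeeping about which interval each turn begins and ends in.
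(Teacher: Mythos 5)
Your proposal is correct and follows exactly the paper's route: the paper offers no written proof beyond asserting that the corollary ``follows from \refthm{UnionBunchesOfTurns},'' and your write-up is a faithful elaboration of precisely that derivation, using \refprop{OrderWithinFullBs} for the full-bunch orders and the subsequence property of bunch orders to justify the interval-by-interval placement and the chaining of arcs.
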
 

\begin{remark}
    We can actually draw any subwords of any code words in any order in \refcor{Algorithm}.  The modular knot $K$ can be obtained as long as the turn corresponding to each letter in the word is drawn. 
\end{remark} 

Take the labelled code word $w={x_1}^{10}{y_1}^2{x_2}^5{y_2}^2{x_3}^7{y_3}^6{x_4}^2{y_4}^2{x_5}^5{y_5}^3$ as an example. To construct the modular knot with code word $w$, instead of marking $44$ points in the branch line in William's algorithm (\reffig{WilliamsAlg1}), the bunch algorithm needs to mark only integers from $-10$ to $+6$. Instead of finding the lexicographic order of the $44$ members in the permutation class of words, the bunch algorithm needs to order only five $x$-bases and five $y$-bases, which can be found almost immediately from the relative magnitudes of the word exponents. Such orders were $(x_2, x_1, x_4, x_5, x_3)$ and $(y_5, y_2, y_1, y_4, y_3)$, where detailed explanation can be found in the example following \refprop{OrderWithinFullBs}.  

Having known the order within full $x$-bunch at each negative integer on the branch line and the order within full $y$-bunch at each positive integer, we can construct the modular knot as appeared in \reffig{SplitTemplateWithLabelsWhole} just by reading the code word exponents, e.g. to draw the $({x_1}^{10}{y_1}^2)$-arc, we start from the $x_1$-point at the integer $-10$ in the branch line, and then draw ten $x$-turns with the last turn ending at the $y_1$-point at $+2$ in the branch line. We then draw two $y$-turns with the last turn ending at the $x_2$-point at $-5$ in the branch line. The orders within the full bunches allow us to know where each turn should end.

\subsection{Why are we introducing the notion of bunches?}  \label{Sec:WhyBunches}
The notion of bunches allows us to more easily find the parent links and annuli for the annular Dehn filling technique that appears in \refsec{UpperVolBound} of this paper and \cite{RodriguezMigueles:PeriodsOfContinuedFractions}. This new perspective also provides a more direct relationship between the geometry of a modular link and its associated code word. For example, viewing a modular link as a union of bunches allows us to tell the starting and ending points of the arc corresponding to a subword directly from the subscripts and exponents in the subword.  This allows us to compare with other bounds of volumes as discussed in \refsec{Discussion}.  

The key step in the bunch algorithm is finding the orders within the full bunches in the modular link $L$ using \refprop{OrderWithinFullBs}, which involves ordering only $\overline{n}$ $x$-bases and $\overline{n}$ $y$-bases, where $\overline{n}$ is the sum of word periods or the braid index (see Remark~\ref{Rmk:TripNum_BraidIndex_PeriodContFraction}). Note that Williams' algorithm involves $\Sigma_{i=1}^{\overline{n}} (k_i + l_i)$ items to be ordered, where $\Sigma_{i=1}^{\overline{n}} (k_i + l_i)$ is the sum\footnote{This sum is also called the \emph{sum of lengths of words} in literature.} of all word exponents $k_i$'s and $l_i$'s denoted in (\ref{Words}). As word exponents are positive integers, we have $\Sigma_{i=1}^{\overline{n}} (k_i + l_i) \geq 2\overline{n}$. The bunch algorithm involves a smaller number of items to be ordered, especially when the word exponents are large. 

\section{An upper volume bound for all modular link complements} \label{Sec:UpperVolBound}

A key result that we use in proving the upper volume bound is Theorem~1.5 in \cite{Cremaschi-RodriguezMigueles:HypOfLinkCpmInSFSpaces}, such result can be stated as follows: 

\begin{theorem}[Theorem~1.5 in \cite{Cremaschi-RodriguezMigueles:HypOfLinkCpmInSFSpaces}]\footnote{By abuse of notation, the symbol $\calL$ in the statement represents an embedding in the function composition and represents a set otherwise. }  \label{Thm:SelfIntersectionNumber}
Let $M$ be a Seifert-fibred space over a hyperbolic $2$-orbifold $B$ with bundle projection map $\calP\from M\to B$. If $\calL$ is a link embedded in $M$ such that $\calP\circ \calL$ is a collection of loops that intersect themselves only transversely and finitely many times with exactly two pre-image points for each self-intersection point, 
% If $B\setminus\calP(L)$ is a union of finitely many discs with finitely many punctures, 
then the simplicial volume of $M\setminus \calL$ satisfies the following inequality: 
\[\norm{M\setminus \calL} \leq 8\ \iota(\calP(\calL),\calP(\calL)), \] 
where $\iota(\calP(\calL),\calP(\calL))$ is the number of self-intersections of $\calP(\calL)$. 
\end{theorem}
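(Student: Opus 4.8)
Since this statement is quoted from \cite{Cremaschi-RodriguezMigueles:HypOfLinkCpmInSFSpaces}, I outline the strategy I would follow rather than reproduce their proof. The plan is to bound $\norm{M\setminus\calL}$ by a count of the self-intersections of $\Gamma:=\calP(\calL)$, using the principle that simplicial volume is concentrated at the crossings of $\Gamma$: the part of $M\setminus\calL$ lying over the crossing-free portion of the base collapses and contributes nothing, whereas each of the $c:=\iota(\calP(\calL),\calP(\calL))$ self-intersection points contributes at most a universal constant, which I aim to pin down as $8$.

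First I would organise the base. Viewing $\Gamma$ as a $4$-valent graph in $B$ whose vertices are exactly the $c$ self-intersection points, I push all vertices off the cone points of $B$ by a small isotopy, choose disjoint closed disks $D_1,\dots,D_c$ centred at the vertices, and set $B_0:=B\setminus\bigcup_v D_v$, so that $\Gamma\cap B_0$ is a disjoint union of embedded arcs. Pulling back by $\calP$ splits $M\setminus\calL$ into the \emph{crossing pieces} $X_v:=\calP^{-1}(D_v)\setminus\calL$, each homeomorphic (independently of $v$) to a solid torus with two disjoint, transversely crossing strands drilled out, and the \emph{product piece} $Y:=\calP^{-1}(B_0)\setminus\calL$.

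Next I would show the product piece carries no simplicial volume. Over $B_0$ the two strands of $\calL$ never cross, so the circle fibration survives away from $\calL$ and the geometric decomposition of $Y$ contains no hyperbolic piece; hence $Y$ is a graph manifold and $\norm{Y}=0$ by the vanishing of simplicial volume for graph manifolds. The essential point is that this vanishing breaks down over the disks $D_v$: there the two strands genuinely cross, $X_v$ acquires a hyperbolic piece, and so the crossing pieces must be bounded individually.

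Finally I would bound each crossing piece and assemble the local data into one global relative fundamental cycle. The heart of the argument is to show that the model pair (solid torus, two transversely crossing strands) admits an ideal triangulation with at most $8$ ideal tetrahedra---geometrically, by straightening $X_v$ onto right-angled ideal octahedra of the type one associates to a crossing---while the collapsible piece $Y$ is triangulated with no essential tetrahedra. Splicing these triangulations across the frontier tori $\calP^{-1}(\partial D_v)$ produces a relative fundamental cycle of $M\setminus\calL$ with at most $8c$ ideal simplices, and since a triangulation by $t$ ideal tetrahedra realises a relative fundamental cycle of $\ell^1$-norm $t$, this gives $\norm{M\setminus\calL}\le 8c=8\,\iota(\calP(\calL),\calP(\calL))$. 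I expect this local step to be the main obstacle: identifying the homeomorphism type of the pair $(X_v,\calL\cap X_v)$ uniformly over all crossings and all fiberings, matching the triangulations across the tori $\calP^{-1}(\partial D_v)$---which are themselves punctured by the strands of $\calL$---and extracting the sharp constant $8$ in place of some larger universal bound.
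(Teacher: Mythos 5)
The paper you were given never proves this statement: it is imported verbatim as Theorem~1.5 of \cite{Cremaschi-RodriguezMigueles:HypOfLinkCpmInSFSpaces} and used as a black box, so the comparison must be with the argument in that reference (whose drilling-and-filling mechanism this paper reuses in \refprop{DehnFillParentMfd} and \refthm{QuadraticUpperBound}). Your guiding philosophy---volume concentrates at crossings, eight tetrahedra per crossing---matches the spirit of that argument, but your assembly mechanism has a fatal gap. You cut $M\setminus\calL$ along the surfaces $\calP^{-1}(\partial D_v)\setminus\calL$, which are tori punctured four times by the strands of $\calL$. These have non-amenable fundamental group, so Gromov's (sub)additivity of simplicial volume under gluings does not apply, and there is no way to promote local bounds on the pieces to the global bound $8c$. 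The local claims themselves also fail in the relative sense that would be needed: the graph-manifold vanishing theorem applies to manifolds with empty or toral boundary, whereas your piece $Y$ has boundary containing four-punctured tori, and since the boundary map on locally finite fundamental cycles gives $\norm{Y,\partial Y}\geq \frac{1}{4}\norm{\partial Y}>0$, the assertion that $Y$ ``contributes nothing'' is false as stated. Passing to triangulations does not rescue this: $\norm{Y}=0$ would not mean $Y$ admits a triangulation with zero tetrahedra (``no essential tetrahedra'' is not a well-defined notion), so any spliced global triangulation has $8c+T_Y$ tetrahedra with $T_Y>0$ depending on $B$ and on the combinatorics of $\calP(\calL)$. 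Worse, the local-to-global principle you invoke is false in general: every closed $3$-manifold is two handlebodies glued along a non-amenable surface, each piece of vanishing content, yet the glued manifold can have arbitrarily large simplicial volume. Your own crossing piece exhibits exactly this: the two strands over $D_v$ are boundary-parallel arcs in the solid torus $\calP^{-1}(D_v)$, so $X_v$ is merely a genus-$3$ handlebody; it carries no intrinsic hyperbolic content, and all the volume of $M\setminus\calL$ is created by the gluings, which your accounting never sees.

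The proof in the cited paper avoids cutting along non-amenable surfaces altogether. One first drills the vertical fibers $F$ over the self-intersection points of $\calP(\calL)$; this is where the hypothesis of exactly two pre-image points per crossing enters, since it gives a uniform local model in which each drilled fiber meets $\calL$ twice. Because $M\setminus\calL$ is recovered from $N:=M\setminus(\calL\cup F)$ by Dehn filling the fiber cusps, and Dehn filling does not increase simplicial volume, one gets $\norm{M\setminus\calL}\leq\norm{N}$ as a global inequality requiring no gluing formula. One then bounds $\norm{N}$ by an explicit ideal decomposition with two ideal octahedra (eight ideal tetrahedra) for each self-intersection point; after the drilling, the strands over the crossing-free part of the base can be isotoped into vertical position and absorbed into these blocks, so the crossing-free region genuinely costs nothing. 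If you wish to keep a decomposition-style argument, you must either drill first, as above, or arrange that all cutting surfaces are amenable (tori or annuli disjoint from $\calL$); as written, the constant $8\,\iota(\calP(\calL),\calP(\calL))$ cannot be extracted from your splicing step.
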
 

For details on simplicial volume and its relationship with hyperbolic volume, please refer to \cite[6.5.4]{Thurston:Geom&TopOf3Mfd}. A fact that we will use is $v_{\text{tet}}\norm{M} = \mathrm{Vol}(M)$ if $M$ is a complete hyperbolic manifold of finite volume and $v_{\textup{tet}}\approx 1.01494$ denotes the volume of the regular ideal tetrahedron. 

% The main result we use to compare simplicial volume with hyperbolic volume is:
% \begin{theorem} [Theorem~6.5.4 in \cite{Thurston:Geom&TopOf3Mfd}] Let $M$ be a complete hyperbolic manifold of finite volume. Then
%     $\norm{M}v_{\text{tet}}=\mathrm{Vol}(M).$ 
% \end{theorem}

Given a modular link complement $\mathrm{T^1}S_{\textup{mod}} \setminus L$ with the modular link $L$ embedded in the Lorenz template, we can obtain the $3$-manifold $\mathrm{T^1}S_{\textup{mod}} \setminus L$ by Dehn filling some parent manifold, the construction of which can be found in \refdef{ParentMfd}. We can view such parent manifold as a complement of a link in the Seifert-fibred space with base space equal to a punctured disc and find its upper volume bound using \refthm{SelfIntersectionNumber}. As Dehn filling decreases volume \cite{Thurston:Geom&TopOf3Mfd}, we can thus obtain an upper volume bound for all modular link complements such that the bound depends only on the sum of word periods of $L$. \reffig{LinkCompleInSFSpaceResized} shows an example of the Seifert-fibred space that we will apply \refthm{SelfIntersectionNumber} to.

The type of Dehn surgery that we use is a special kind called the annular Dehn surgery. Before giving a more precise definition, let us clarify our sign of slope or the frame on each torus boundary. Applying a Dehn surgery to an unknot $\partial_{\textup{out}} A$ embedded in $\SS^3$ along the slope $+\frac{1}{n}$ (for some positive integer $n$) means a new $3$-manifold can be obtained by the following steps:
\begin{enumerate}
    \item Drill $\SS^3$ along the unknot $\partial_{\textup{out}} A$, 
    \item Cut $\SS^3\setminus N(\partial_{\textup{out}} A)$ along a compression disc bounded by $\partial N(\partial_{\textup{out}} A)$ to obtain a solid cylinder.\footnote{The notation $N(S)$ denotes an open tubular neighbourhood of a $1$- or $2$-submanifold $S$.} 
    \item Twist one end of the solid cylinder (viewed from the interior) anti-clockwise for $2\pi n$.  
    \item Glue back the two bases of the solid cylinder naturally. 
    \item Trivially Dehn-fill the unknot complement obtained from the last step. 
\end{enumerate} 

\begin{definition}
    Given an annulus $A$ embedded with soul the unknot, such that the outer boundary $\partial_{\textup{out}} A$ do not twist along its soul, in a plane in $\SS^3 = \RR^3\cup \{\infty\}$. Applying Dehn surgery to $\partial_{\textup{out}} A$  in $\SS^3$ along the slope $+\frac{1}{n}$ and applying Dehn surgery to the inner boundary $\partial_{\textup{in}} A$ in $\SS^3$ along the slope $-\frac{1}{n}$ is said to be \emph{applying $(+\frac{1}{n})$-annular Dehn surgery to $A$}.  
\end{definition}

\reffig{AnnularDehnFilling} shows an example of a
 $(+\frac{1}{3})$-annular Dehn surgery. 

\begin{figure}
\import{figures/}{AnnularDehnFilling.pdf_tex}
\caption{Applying $(+\frac{1}{3})$-annular Dehn surgery to $A$, where $R$ is an unknot intersecting $A$ once before the surgery.  
\label{Fig:AnnularDehnFilling}}
\end{figure} 

Next, we define the Seifert-fibred space, parent link, and parent manifold of a modular link. Let $L$ be a modular link with labelled code words denoted in (\ref{Words}).  Let $(d_{\#k}, \ldots, d_1)$ be the descending order of the $x$-exponents of all words of $L$, where $\#k$ is the total number of distinct $x$-exponents.  Let $(a_{1}, \ldots, a_{\#l})$ be the ascending order of the $y$-exponents of all words of $L$, where $\#l$ is the total number of distinct $y$-exponents. 

\begin{definition} \label{Def:SFSpaceOfModularLink}
    The \emph{Seifert-fibred space of the modular link} $L$ is the Seifert-fibred space $\SS^3\setminus \calL_{\textup{SF}}$ over a punctured disc, where $\calL_{\textup{SF}}$ is a link in $\SS^3$ consisting of the following link components (see \reffig{LinkCompleInSFSpaceResized_JustSF} for example): 
    \begin{itemize}
        \item an unknot $U$ that passes through each of the two holes of the Lorenz template once, 
        \item all boundary components of the concentric annuli $A^x_{d_1}$ (furthest from the left hole), \ldots, $A^x_{d_{\#k}}$ (closest to the left hole) embedded in the left ear of the Lorenz template with the left hole of the template as their common centre, that is, a collection of simple closed curves passing through points along the left side of the branch line, and  
        \item all boundary components of the concentric annuli $A^y_{a_1}$ (furthest from the right hole), \ldots, $A^y_{a_{\#l}}$ (closest to the right hole) embedded in the right ear of the Lorenz template with the right hole of the template as their common centre, that is, a collection of simple closed curves passing through points along the right side of the branch line. 
    \end{itemize}
\end{definition}

\begin{figure}
%% Creator: Inkscape 1.1.2 (b8e25be833, 2022-02-05), www.inkscape.org
%% PDF/EPS/PS + LaTeX output extension by Johan Engelen, 2010
%% Accompanies image file 'LinkCompleInSFSpaceResized_JustSF.pdf' (pdf, eps, ps)
%%
%% To include the image in your LaTeX document, write
%%   \input{<filename>.pdf_tex}
%%  instead of
%%   \includegraphics{<filename>.pdf}
%% To scale the image, write
%%   \def\svgwidth{<desired width>}
%%   \input{<filename>.pdf_tex}
%%  instead of
%%   \includegraphics[width=<desired width>]{<filename>.pdf}
%%
%% Images with a different path to the parent latex file can
%% be accessed with the `import' package (which may need to be
%% installed) using
%%   \usepackage{import}
%% in the preamble, and then including the image with
%%   \import{<path to file>}{<filename>.pdf_tex}
%% Alternatively, one can specify
%%   \graphicspath{{<path to file>/}}
%% 
%% For more information, please see info/svg-inkscape on CTAN:
%%   http://tug.ctan.org/tex-archive/info/svg-inkscape
%%
\begingroup%
  \makeatletter%
  \providecommand\color[2][]{%
    \errmessage{(Inkscape) Color is used for the text in Inkscape, but the package 'color.sty' is not loaded}%
    \renewcommand\color[2][]{}%
  }%
  \providecommand\transparent[1]{%
    \errmessage{(Inkscape) Transparency is used (non-zero) for the text in Inkscape, but the package 'transparent.sty' is not loaded}%
    \renewcommand\transparent[1]{}%
  }%
  \providecommand\rotatebox[2]{#2}%
  \newcommand*\fsize{\dimexpr\f@size pt\relax}%
  \newcommand*\lineheight[1]{\fontsize{\fsize}{#1\fsize}\selectfont}%
  \ifx\svgwidth\undefined%
    \setlength{\unitlength}{373.49191101bp}%
    \ifx\svgscale\undefined%
      \relax%
    \else%
      \setlength{\unitlength}{\unitlength * \real{\svgscale}}%
    \fi%
  \else%
    \setlength{\unitlength}{\svgwidth}%
  \fi%
  \global\let\svgwidth\undefined%
  \global\let\svgscale\undefined%
  \makeatother%
  \begin{picture}(1,0.42628393)%
    \lineheight{1}%
    \setlength\tabcolsep{0pt}%
    \put(0,0){\includegraphics[width=\unitlength,page=1]{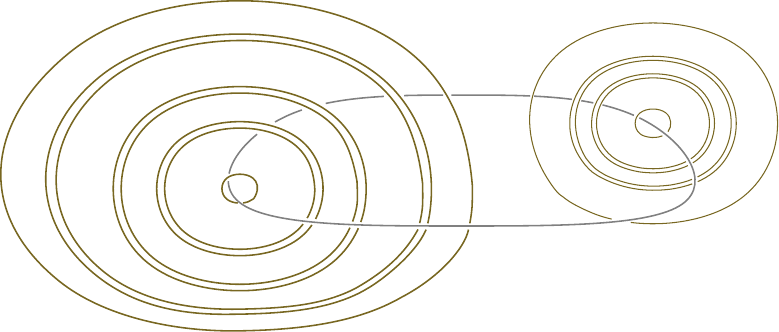}}%
    \put(0.67453834,0.1005098){\color[rgb]{0.50588235,0.50588235,0.50588235}\makebox(0,0)[lt]{\lineheight{1.25}\smash{\begin{tabular}[t]{l}$U$\end{tabular}}}}%
  \end{picture}%
\endgroup%

\caption{Seifert-fibred space $\SS^3\setminus \calL_{\textup{SF}}$ of the modular knot $K$ with code word $x^{10}y^2x^5y^2x^7y^6x^2y^2x^5y^3$, where $\calL_{\textup{SF}}$ consists of the unknot $U$ and all annuli boundaries.   The base space of the Seifert-fibred space is equal to a $14$-punctured disc bounded by $U$. 
\label{Fig:LinkCompleInSFSpaceResized_JustSF}}
\end{figure}

\begin{remark}
    The Seifert-fibred space $\SS^3\setminus \calL_{\textup{SF}}$ of the modular link $L$ is a Seifert-fibred space with base space equal to a $2(\#k+\#l)$-punctured disc bounded by $U$. \reffig{LinkCompleInSFSpaceResized} shows an example. 
\end{remark} 

\begin{figure}%[H] 
\captionsetup[subfigure]{position=b}
\centering
\subcaptionbox{Left: Step~(1): Draw the horizontal line segments based on the orders within full bunches, which are $(x_2, x_1, x_4, x_5, x_3)$ and $(y_5, y_2, y_1, y_4, y_3)$. \\
Right: The full picture illustrating Step~(1). This indicates where the horizontal line segments are located in \reffig{LinkCompleInSFSpaceResized}.
\label{Fig:Step1} }
{\hspace{-50pt}\import{figures/}{ParentKnotSteps1ab.pdf_tex}  \hspace{20pt}
 %% Creator: Inkscape 1.1.2 (b8e25be833, 2022-02-05), www.inkscape.org
%% PDF/EPS/PS + LaTeX output extension by Johan Engelen, 2010
%% Accompanies image file 'LinkCompleInSFSpaceResized_Step1.pdf' (pdf, eps, ps)
%%
%% To include the image in your LaTeX document, write
%%   \input{<filename>.pdf_tex}
%%  instead of
%%   \includegraphics{<filename>.pdf}
%% To scale the image, write
%%   \def\svgwidth{<desired width>}
%%   \input{<filename>.pdf_tex}
%%  instead of
%%   \includegraphics[width=<desired width>]{<filename>.pdf}
%%
%% Images with a different path to the parent latex file can
%% be accessed with the `import' package (which may need to be
%% installed) using
%%   \usepackage{import}
%% in the preamble, and then including the image with
%%   \import{<path to file>}{<filename>.pdf_tex}
%% Alternatively, one can specify
%%   \graphicspath{{<path to file>/}}
%% 
%% For more information, please see info/svg-inkscape on CTAN:
%%   http://tug.ctan.org/tex-archive/info/svg-inkscape
%%
\begingroup%
  \makeatletter%
  \providecommand\color[2][]{%
    \errmessage{(Inkscape) Color is used for the text in Inkscape, but the package 'color.sty' is not loaded}%
    \renewcommand\color[2][]{}%
  }%
  \providecommand\transparent[1]{%
    \errmessage{(Inkscape) Transparency is used (non-zero) for the text in Inkscape, but the package 'transparent.sty' is not loaded}%
    \renewcommand\transparent[1]{}%
  }%
  \providecommand\rotatebox[2]{#2}%
  \newcommand*\fsize{\dimexpr\f@size pt\relax}%
  \newcommand*\lineheight[1]{\fontsize{\fsize}{#1\fsize}\selectfont}%
  \ifx\svgwidth\undefined%
    \setlength{\unitlength}{232.8887902bp}%
    \ifx\svgscale\undefined%
      \relax%
    \else%
      \setlength{\unitlength}{\unitlength * \real{\svgscale}}%
    \fi%
  \else%
    \setlength{\unitlength}{\svgwidth}%
  \fi%
  \global\let\svgwidth\undefined%
  \global\let\svgscale\undefined%
  \makeatother%
  \begin{picture}(1,0.42643002)%
    \lineheight{1}%
    \setlength\tabcolsep{0pt}%
    \put(0,0){\includegraphics[width=\unitlength,page=1]{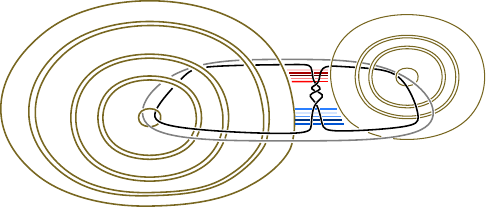}}%
    \put(0.67658886,0.08430942){\color[rgb]{0.50588235,0.50588235,0.50588235}\makebox(0,0)[lt]{\lineheight{1.25}\smash{\begin{tabular}[t]{l}$U$\end{tabular}}}}%
  \end{picture}%
\endgroup%
} 

\vspace{20pt}
\subcaptionbox{Left: Step~(2a): Extend the line segments based on the exponents in the code word. E.g. Since $x_1$ has exponent $k_1=10$, the $x_1$-line segment (i.e. the darkest blue line segment) is extended such that it intersects the annuli $A^x_{2}$, $A^x_{5}$, $A^x_{7}$, and $A^x_{10}$. \\
Right: Step~(2b): Further extend the other side of each $x_i$-line segment (blue) to match with the extended $y_i$-line segment (red). Similarly, further extend the other side of each $y_i$-line segment to match with the extended $x_{i+1}$-line segment. 
\label{Fig:Step2ab}}
{\hspace{-50pt}\import{figures/}{ParentKnotSteps2a.pdf_tex} \hspace{20pt} \import{figures/}{ParentKnotSteps2ab.pdf_tex}}  

\vspace{20pt}
\subcaptionbox{Step~(3): Connect the endpoints via vertical line segments. (We are viewing the parent knot projection from above the disc bounded by the unknot $U$.) 
\label{Fig:Step3} }
{\import{figures/}{ParentKnotSteps3-4.pdf_tex}}
% \subcaptionbox{Step~(4) (if needed) ?: 
% \label{Fig:Step4} }
% {\import{figures/}{ParentKnotSteps3-4.pdf_tex}} 
%%% Main caption 
\caption{Construction of the parent knot of the modular knot $K$ with word $x^{10}y^2x^5y^2x^7y^6x^2y^2x^5y^3$. \label{Fig:ParentKnotSteps_New}} 
\end{figure}

\begin{definition}[Parent link] \label{Def:ParentLink}
Suppose $(x_{\sigma(1)}, \ldots, x_{\sigma(\overline{n})})$ and $(y_{\tau(1)}, \ldots, y_{\tau(\overline{n})})$ are orders within the full $x$-bunch and full $y$-bunch in the modular link $L$ respectively (We can use \refprop{OrderWithinFullBs} to find these orders). 
The \emph{parent link} $L_{\textup{p}}$ of $L$ is a disjoint union of circle(s) embedded in the Seifert-fibred space $\SS^3\setminus \calL_{\textup{SF}}$ such that $L_{\textup{p}}$ can be constructed by the following steps: Assume all annuli intersect the disc bounded by $U$ perpendicularly and assume we are viewing the disc from above. 
\begin{enumerate}
   \item Between the two sets of annuli $\{A^x_{d_1}, \ldots, A^x_{d_{\#k}}\}$ and $\{A^y_{a_1}, \ldots, A^y_{a_{\#l}}\}$ (refer to \reffig{Step1}~ for example): 
    \begin{enumerate}
        \item \label{Item:RedLineSegments} Draw $\overline{n}$ disjoint horizontal line segments\footnote{These $y_i$-line segments become undercrossing strands after annular Dehn surgeries.} from top to bottom and label each of them from top to bottom with $y_{\tau(1)}, \ldots, y_{\tau(\overline{n})}$ respectively (or equivalently, colour them with the corresponding red intensities). 
        \item \label{Item:BlueLineSegments}  Below the set of all horizontal line segments constructed in Step (\ref{Item:RedLineSegments}), draw another set of $\overline{n}$ disjoint horizontal line segments\footnote{These $x_i$-line segments become overcrossing strands after annular Dehn surgeries.} from bottom to top and label each of them from bottom to top with $x_{\sigma(1)}, \ldots, x_{\sigma(\overline{n})}$ respectively (or equivalently, colour them with the corresponding blue intensities).  
    \end{enumerate}

    \item Horizontal extensions (refer to \reffig{Step2ab} for example):
    \begin{enumerate}
        \item Horizontally extend each (labelled or coloured) horizontal line segment and specify the over/under-crossings such that each horizontally extended $x_i$-line segment intersects each of the annuli $A^x_{d_1}, A^x_{d_2}, \ldots, A^x_{k_i}$ transversely once and each horizontally extended $y_j$-line segment intersects each of $A^y_{a_1}, A^y_{a_2}, \ldots, A^y_{l_j}$ transversely once. 
        \item Further extend each (blue) $x_i$-line segment on the other side such that the new extension lies in front of all $y$-annuli and ends at a point vertically below the right endpoint of the extended $y_i$-line segment. Similarly, further extend each (red) $y_i$-line segment on the other side such that the new extension lies behind all $x$-annuli and ends at a point vertically above the left endpoint of the extended $x_{i+1}$-line segment, where the subscript addition is the cyclic group operation\footnote{See assumptions right before \refprop{OrderWithinFullBs}.} corresponding to the word containing $x_i$. 
    \end{enumerate}

    \item\label{Step:VertLineSegments} Vertical connections (refer to \reffig{Step3} for example): Connect the $2\overline{n}$ extended line segments according to each word in (\ref{Words}). In other words, we start from the left endpoint of the extended $x_1$-line segment, trace along the line segment and connect its right endpoint to the right endpoint of the extended $y_1$-line segment by a (blue) vertical\footnote{We use the term ``vertical'' here loosely, the line segment does not need to be strictly vertical as long as the graph structure is preserved.} line segment. If the word period $n_1$ is greater than one, we connect the left endpoint of the extended $y_1$-line segment to the left endpoint of the extended $x_2$-line segment by a (red) vertical line segment. Similarly, we connect the right endpoint of the extended $x_2$-line segment to the right endpoint of the extended $y_2$-line segment by a (blue) vertical line segment. Continue the process until we reach the last labelled base $y_{n_1}$ of the first word, 
    we then connect the left endpoint of the extended $y_{n_1}$-line segment to the left endpoint of the extended $x_{1}$-line segment by a (red) vertical line segment to obtain the projection of the first link component in the disc bounded by $U$. Repeat the process for the remaining words in (\ref{Words}). 

    \item Adjustments: 
    \begin{enumerate}
        \item Adjust the horizontal extensions such that for each set of two or more vertical line segments approaching the same annulus, their corresponding labelled/coloured arcs (each of which consists of one vertical and one horizontal line segment of the same label/colour) do not intersect. 
        \item \label{Step:SpecifyOverUnder} Specify the over/under-crossings at each self-intersection of the projection of $L_{\textup{p}}$ such that any (red) $y$-arc goes under any (blue) $x$-arc, and any vertical (blue) $x$-arc goes under any horizontal (blue) $x$-arc.  
    \end{enumerate}
\end{enumerate}
\end{definition}  

Note that a parent link is a link in a Seifert-fibred space, we call such a link with exactly one link component a \emph{parent knot}. 

\begin{remark} \label{Rmk:NoCrossings}
    In Step~(\ref{Step:SpecifyOverUnder}) of \refdef{ParentLink}, we do not need to consider the over/under-crossing information among the (red) $y$-arcs. The reason is as follows: Observe that the (red) $y$-arcs will become the undercrossing strands in the split template.  We have been viewing the projection of the parent link in a way that the $y$-arcs start from the top right of the diagram and end at the bottom left of the diagram. This is consistent with the overall direction of the non-intersecting undercrossing strands in the split template. Thus there are no intersections among the (red) $y$-arcs using the above construction.  
\end{remark}

\reffig{SplitTemplateWithLabelsAnnuliKnotRing} shows an example of the parent knot $K_{\textup{p}}$ of the modular knot $K$ with word $x^{10}y^2x^5y^2x^7y^6x^2y^2x^5y^3$. 

To define the parent manifold of a modular link $L$, recall that $(a_{1}, \ldots, a_{\#l})$ is the ascending order of the $y$-exponents of all words of $L$, where $\#l$ is the number of distinct $y$-exponents of all words of $L$.

\begin{definition}[Parent manifold] \label{Def:ParentMfd}
    The \emph{parent manifold $\SS^3\setminus(\calL_{\textup{SF}}\cup \calL_{L})$ of the modular link $L$} is the complement of the link $\calL_L$ in the Seifert-fibred space $\SS^3\setminus \calL_{\textup{SF}}$ of the modular link $L$ (see \refdef{SFSpaceOfModularLink}), where $\calL_L$ consists of the following: 
    \begin{itemize}
        \item the parent link $L_{\textup{p}}$ (see \refdef{ParentLink} for construction), 
        \item the trefoil knot $T$ associated to $\mathrm{T^1}S_{\textup{mod}}$, and
        \item the unknots $V_{a_1}, \ldots, V_{a_{\#l}}$, where each $V_j$ encircles all the blue vertical line segment(s) (as defined in Step~(\ref{Step:VertLineSegments}) of \refdef{ParentLink}) with the connecting red horizontal line segment(s) that intersect the annulus $A^y_j$. 
    \end{itemize} 
\end{definition} 

\reffig{LinkCompleInSFSpaceResized} shows an example of the parent manifold of a modular knot $K$, which is a one-component modular link. 

\begin{figure}
%% Creator: Inkscape 1.1.2 (b8e25be833, 2022-02-05), www.inkscape.org
%% PDF/EPS/PS + LaTeX output extension by Johan Engelen, 2010
%% Accompanies image file 'LinkCompleInSFSpaceResized.pdf' (pdf, eps, ps)
%%
%% To include the image in your LaTeX document, write
%%   \input{<filename>.pdf_tex}
%%  instead of
%%   \includegraphics{<filename>.pdf}
%% To scale the image, write
%%   \def\svgwidth{<desired width>}
%%   \input{<filename>.pdf_tex}
%%  instead of
%%   \includegraphics[width=<desired width>]{<filename>.pdf}
%%
%% Images with a different path to the parent latex file can
%% be accessed with the `import' package (which may need to be
%% installed) using
%%   \usepackage{import}
%% in the preamble, and then including the image with
%%   \import{<path to file>}{<filename>.pdf_tex}
%% Alternatively, one can specify
%%   \graphicspath{{<path to file>/}}
%% 
%% For more information, please see info/svg-inkscape on CTAN:
%%   http://tug.ctan.org/tex-archive/info/svg-inkscape
%%
\begingroup%
  \makeatletter%
  \providecommand\color[2][]{%
    \errmessage{(Inkscape) Color is used for the text in Inkscape, but the package 'color.sty' is not loaded}%
    \renewcommand\color[2][]{}%
  }%
  \providecommand\transparent[1]{%
    \errmessage{(Inkscape) Transparency is used (non-zero) for the text in Inkscape, but the package 'transparent.sty' is not loaded}%
    \renewcommand\transparent[1]{}%
  }%
  \providecommand\rotatebox[2]{#2}%
  \newcommand*\fsize{\dimexpr\f@size pt\relax}%
  \newcommand*\lineheight[1]{\fontsize{\fsize}{#1\fsize}\selectfont}%
  \ifx\svgwidth\undefined%
    \setlength{\unitlength}{374.7297126bp}%
    \ifx\svgscale\undefined%
      \relax%
    \else%
      \setlength{\unitlength}{\unitlength * \real{\svgscale}}%
    \fi%
  \else%
    \setlength{\unitlength}{\svgwidth}%
  \fi%
  \global\let\svgwidth\undefined%
  \global\let\svgscale\undefined%
  \makeatother%
  \begin{picture}(1,0.42487583)%
    \lineheight{1}%
    \setlength\tabcolsep{0pt}%
    \put(0,0){\includegraphics[width=\unitlength,page=1]{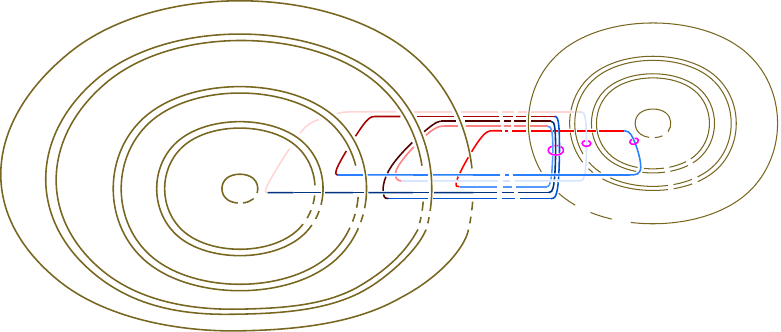}}%
    \put(0.71957315,0.21132254){\color[rgb]{1,0,1}\makebox(0,0)[lt]{\lineheight{1.25}\smash{\begin{tabular}[t]{l}\fontsize{5pt}{1em}$V_2$\end{tabular}}}}%
    \put(0.75310614,0.21933034){\color[rgb]{1,0,1}\makebox(0,0)[lt]{\lineheight{1.25}\smash{\begin{tabular}[t]{l}\fontsize{5pt}{1em}$V_3$\end{tabular}}}}%
    \put(0.81830144,0.22733654){\color[rgb]{1,0,1}\makebox(0,0)[lt]{\lineheight{1.25}\smash{\begin{tabular}[t]{l}\fontsize{5pt}{1em}$V_6$\end{tabular}}}}%
    \put(0,0){\includegraphics[width=\unitlength,page=2]{LinkCompleInSFSpaceResized.pdf}}%
    \put(0.67231021,0.1001778){\color[rgb]{0.50588235,0.50588235,0.50588235}\makebox(0,0)[lt]{\lineheight{1.25}\smash{\begin{tabular}[t]{l}$U$\end{tabular}}}}%
  \end{picture}%
\endgroup%

\caption{Parent manifold $\SS^3\setminus (\calL_{\textup{SF}}\cup \calL_K)$ of the modular knot $K$ with code word $x^{10}y^2x^5y^2x^7y^6x^2y^2x^5y^3$, where $\calL_{\textup{SF}}$ consists of the unknot $U$ and all annuli boundaries, and $\calL_K$ consists of the parent knot $K_{\textup{p}}$, the trefoil knot $T$, and the unknots $V_2$, $V_3$, $V_6$.   The Seifert-fibred space is $\SS^3\setminus \calL_{\textup{SF}}$ with base space equal to a $14$-punctured disc bounded by $U$. 
\label{Fig:LinkCompleInSFSpaceResized}}
\end{figure}

\begin{figure}
\import{figures/}{SplitTemplateWithLabelsAnnuliKnotRing.pdf_tex}
\caption{Parent knot of the modular knot with coloured word $w$, the annuli, and the unknots $V_2, V_3, V_6$.  
\label{Fig:SplitTemplateWithLabelsAnnuliKnotRing}}
\end{figure} 

\begin{proposition} \label{Prop:DehnFillParentMfd}
    Each modular link complement $\mathrm{T^1}S_{\textup{mod}}\setminus L$ can be obtained by applying Dehn fillings on its parent manifold $\SS^3\setminus(\calL_{\textup{SF}}\cup \calL_L)$.  
\end{proposition}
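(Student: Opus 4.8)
The plan is to exhibit an explicit list of Dehn fillings on the boundary tori of the parent manifold $\SS^3\setminus(\calL_{\textup{SF}}\cup\calL_L)$ coming from the components we wish to eliminate, namely the unknot $U$, the two families of annulus boundaries, and the encircling unknots $V_{a_1},\ldots,V_{a_{\#l}}$, and then to identify the filled manifold with $\SS^3\setminus(T\cup L)$. Since $\mathrm{T^1}S_{\textup{mod}}$ is the trefoil complement, the latter is exactly $\mathrm{T^1}S_{\textup{mod}}\setminus L$. Throughout, the two cusps we never fill are the trefoil $T$ and the parent link $L_{\textup{p}}$; the content of the proof is that, after filling, $L_{\textup{p}}$ becomes the modular link $L$ sitting in the Lorenz template in the position prescribed by \refthm{UnionBunchesOfTurns}.

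First I would fill the annulus boundaries by annular Dehn surgeries. Writing $d_0=0$ and $a_0=0$, I would apply a $\big(+\tfrac{1}{d_j-d_{j-1}}\big)$-annular Dehn surgery to each $x$-annulus $A^x_{d_j}$ and a $\big(+\tfrac{1}{a_j-a_{j-1}}\big)$-annular Dehn surgery to each $y$-annulus $A^y_{a_j}$. By \refdef{ParentLink}, the horizontal $x_i$-segment meets precisely the nested annuli $A^x_{d_1},\ldots,A^x_{k_i}$, so the windings it accumulates around the left hole total $\sum_{d_j\le k_i}(d_j-d_{j-1})=k_i$; symmetrically, the $y_j$-segment accumulates $l_j$ windings around the right hole. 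Thus the straight horizontal segments of $L_{\textup{p}}$ are converted into arcs wrapping $k_i$ (resp. $l_j$) times around the appropriate hole, that is, into the $x_i^{k_i}$- and $y_j^{l_j}$-arcs of the template. Because each annular surgery is performed on an unknot with slope $\pm\tfrac{1}{m}$, each such filling returns the ambient space to $\SS^3$, so these fillings alter only the embedding of the remaining link and not the ambient manifold.

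Next I would fill $U$ trivially, along the meridian bounding the base disc, which caps off the disc and returns the Seifert-fibred ambient space to $\SS^3$, and fill each $V_{a_j}$ along the slope that undoes the twisting of the blue vertical connectors it encircles. The rings $V_{a_j}$ are needed precisely because the $y$-annular surgery on $A^y_{a_j}$ would otherwise spin the vertical $x$-connectors running past it; filling $V_{a_j}$ inserts the compensating number of twists and restores these connectors to unknotted vertical arcs, so that the blue $x$-arcs close up as pure overcrossing strands. The over/under information fixed in Step~(4b) of \refdef{ParentLink} (every red $y$-arc under every blue $x$-arc, and vertical blue under horizontal blue) is exactly the crossing convention of the Lorenz template, and it is preserved by all of the above fillings.

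The main obstacle is the bookkeeping in the two preceding paragraphs: one must check that after the annular surgeries and the $V_{a_j}$-fillings the resulting arcs begin and end in the intervals predicted by \refthm{UnionBunchesOfTurns} (the $x_i^{k_i}$-arc in the $-k_i^{\textup{th}}$ interval and the $y_j^{l_j}$-arc in the $+l_j^{\textup{th}}$ interval), and that the left-to-right order of strands agrees with the orders within the full bunches, so that the surgered $L_{\textup{p}}$ is ambient isotopic to $L$ in the template rather than to some other Lorenz link. Granting this, the filled manifold is $\SS^3$ containing exactly $T$ and $L$, and removing open solid-torus neighbourhoods of these yields $\SS^3\setminus(T\cup L)=\mathrm{T^1}S_{\textup{mod}}\setminus L$, as required. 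I expect the verification of the winding count and of the compensating $V_{a_j}$-slopes to be the delicate points; both are cleanest to carry out first on the running example of \reffig{LinkCompleInSFSpaceResized} and then to argue in general.
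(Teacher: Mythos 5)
Your proposal follows essentially the same route as the paper's proof: the same $\bigl(+\tfrac{1}{d_j-d_{j-1}}\bigr)$- and $\bigl(+\tfrac{1}{a_j-a_{j-1}}\bigr)$-annular Dehn fillings on the nested annuli (your telescoping winding count $\sum_{d_j\le k_i}(d_j-d_{j-1})=k_i$ is the same observation the paper phrases via counting consecutive bunches), the unknots $V_{a_j}$ filled to cancel the unwanted twisting of the blue vertical connectors (the paper pins this down as slope $-1$ when two or more such connectors meet the same $y$-annulus, and trivial filling otherwise), a trivial filling of $U$, and the identification of the surgered parent link with $L$ via \refthm{UnionBunchesOfTurns}. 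The verification you defer as ``bookkeeping'' is handled in the paper at essentially the same level of detail, by matching the bunch counts and orders guaranteed by that theorem, so your outline is correct and faithful to the paper's argument.
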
 

\begin{proof}
Let $L$ be a modular link with labelled code words denoted in (\ref{Words}). Let $(d_{\#k}, \ldots, d_1)$ be the descending order of all $x$-exponents, where $\#k$ is the number of distinct $x$-exponents of all words of $L$.  Let $(a_{1}, \ldots, a_{\#l})$ be the ascending order of all $y$-exponents, where $\#l$ is the number of distinct $y$-exponents of all words of $L$. 

Recall from \refdef{SFSpaceOfModularLink} that the Seifert-fibred space $\SS^3\setminus \calL_{\textup{SF}}$ contains the interiors of all $x$-annuli $A^x_{d_1}, \ldots, A^x_{d_{\#k}}$ centred at the left hole of the template and the interiors of all $y$-annuli $A^y_{a_1}, \ldots, A^y_{a_{\#l}}$ centred at the right hole of the template.  All these annuli are embedded in the Lorenz template and we have integral markings $\{-k_{\mu}, \ldots, +l_{\lambda}\}$ in the branch line of the template defined by the arcs of the subwords $x_{\mu}^{k_{\mu}}$ and $y_{\lambda}^{l_{\lambda}}$ with maximal exponents. We can ambient-isotope the annuli such that the following happens (see \reffig{SplitTemplateWithLabelsAnnuliKnotRing}):  
\begin{itemize}
    \item for any $i\in[1,\#k]\cap\ZZ$, the $x$-annulus $A^x_{d_i}$ has inner boundary $\partial_{\textup{in}}A^x_{d_i}$ appearing as a vertical line at $-d_i-0.45$ in the split template and outer boundary $\partial_{\textup{out}}A^x_{d_i}$ appearing as a vertical line at $-d_{i-1}-0.55$ in the split template, with $d_0\coloneqq 0$; and 
    
    \item for any $i\in[1,\#l]\cap\ZZ$, the $y$-annulus $A^y_{a_i}$ has inner boundary $\partial_{\textup{in}}A^y_{a_i}$ appearing as a vertical line at $+a_i+0.45$ in the split template and outer boundary $\partial_{\textup{out}}A^y_{a_i}$ appearing as a vertical line at $+a_{i-1}+0.55$ in the split template, with $a_0\coloneqq 0$. 
\end{itemize}

By \refthm{UnionBunchesOfTurns}, the modular link $L$ consists of all
the $-u^{\textup{th}}$ $x$-bunches in $L$, with integer $u$ ranging from $1$ to $k_{\mu}=d_{\#k}$, and all  
the $+v^{\textup{th}}$ $y$-bunches in $L$, with integer $v$ ranging from $1$ to $l_{\lambda}=a_{\#l}$.  The same theorem also implies the following:
\begin{itemize}
    \item for any $i\in[1,\#k]\cap\ZZ$, there is/are $(d_i-d_{i-1})$ consecutive $x$-bunch(es), each of which has $(\overline{n}-\sum_{\delta=1}^{i-1}|\{k_j:k_j = d_{\delta}\}|)$ $x$-turn(s), in $L$; and 
    \item for any $i\in[1,\#l]\cap\ZZ$, there is/are $(a_i-a_{i-1})$ consecutive $y$-bunch(es), each of which has $(\overline{n}-\sum_{\alpha=1}^{i-1}|\{l_j:l_j = a_{\alpha}\}|)$ $y$-turn(s), in $L$,
\end{itemize}
where the summation notation is assumed to be zero when $i=1$. 

Hence, by applying the following annular Dehn fillings, we can obtain all bunches (possibly containing undesired full twists) in $L$. (For simplicity, we only show the resultant knot after two annular Dehn fillings in \reffig{SplitTemplateWithLabelsAnnuliKnotRing_AfterDF}.)

\begin{itemize} 
    \item For any $i\in[1,\#k]\cap\ZZ$, apply $(+\frac{1}{{d_i}-{d_{i-1}}})$-annular Dehn filling to $A^x_{d_i}$.   
\end{itemize} 

\begin{itemize} 
    \item For any $i\in[1,\#l]\cap\ZZ$, apply $(+\frac{1}{{a_i}-{a_{i-1}}})$-annular Dehn filling to $A^y_{a_i}$.   
\end{itemize} 

Let $j\in\{a_1,\ldots,a_{\#l}\}$.
If there are two or more vertical line segments of the parent knot $L_{\textup{p}}$ approaching the same $y$-annulus $A^y_j$ in the parent manifold, the knot obtained after the corresponding annular Dehn filling will contain an undesired full twist (see \reffig{SplitTemplateWithLabelsAnnuliKnotRing_AfterDF}). In such case, we apply a Dehn filling to the unknot $V_j$ along the slope $-1$ to untwist the full twist. Otherwise, we apply trivial Dehn filling (with slope = $+\frac{1}{0}$) to $V_i$. 

\begin{figure}
\import{figures/}{SplitTemplateWithLabelsAnnuliKnotRing_AfterDF.pdf_tex}
\caption{Resultant knot obtained after applying $(+\frac{1}{{10}-{7}})$-annular Dehn filling to $A^x_{10}$ and $(+\frac{1}{{2}-{0}})$-annular Dehn filling to $A^y_{2}$ for the parent manifold of the modular knot with code word $x^{10}y^2x^5y^2x^7y^6x^2y^2x^5y^3$. Note that there were three 
vertical blue line segments with corresponding red line segments intersecting the annulus $A^y_2$ before the Dehn fillings, the  $(+\frac{1}{{2}-{0}})$-annular Dehn filling to $A^y_{2}$ produced a Reidemeister~I~kink in those three parallel strands, which is actually a (undesired) full twist of the three strands. 
\label{Fig:SplitTemplateWithLabelsAnnuliKnotRing_AfterDF}}
\end{figure}

By \refdef{ParentMfd}, the aforementioned situations (i.e., two or more vertical line segments approaching the same $y$-annulus) are the only possible cases where the link obtained after all annular Dehn fillings may not be embedded in the Lorenz template. After untwisting any undesired full twists, the link obtained can be embedded in the Lorenz template.    

From the construction in \refdef{ParentMfd}, the trefoil knot $T$ and the unknot $U$ are positioned in a way such that their ambient-isotopy classes are not changed after the above annular Dehn fillings. By applying a final trivial Dehn filling to the unknot $U$, we obtain the modular link complement $\mathrm{T^1}S_{\textup{mod}}\setminus L$.  \end{proof}

\begin{remark} 
    When there is only one blue strand approaching a $y$-annulus, the annular Dehn filling will not create any undesired full twist. 
\end{remark} 

For example, \reffig{LinkCompleInSFSpaceResized} shows the parent manifold $\SS^3\setminus (\calL_{\textup{SF}}\cup \calL_K)$ of the modular knot $K$ with the labelled (or coloured) code word \[w={x_1}^{10}{y_1}^2{x_2}^5{y_2}^2{x_3}^7{y_3}^6{x_4}^2{y_4}^2{x_5}^5{y_5}^3.\]
A close-up of the parent knot $K_{\textup{p}}$, the annuli, and the unknots for removing undesired full twists after the annular Dehn fillings are illustrated in \reffig{SplitTemplateWithLabelsAnnuliKnotRing}.  

The descending order of the $x$-exponents in the example word $w$ is 
\[(d_4, d_3, d_2, d_1) = (k_{1}, k_{3}, k_{2}=k_{5}, k_{4}) = (10,7,5,2),\] 
and the ascending order of the $y$-exponents in $w$ is  
\[(a_1, a_2, a_3) = (l_{1} = l_{2} = l_{4}, l_{5}, l_{3}) = (2,3,6).\] 
By applying the following Dehn fillings, we obtain the modular knot complement $\mathrm{T^1}S_{\textup{mod}}\setminus K$. 

For the $x$-annuli: 
\begin{itemize} 
    \item Apply $(+\frac{1}{2})$-annular Dehn filling to $A^x_2$.
    \item Apply $(+\frac{1}{5-2})$-annular Dehn filling to $A^x_5$. 
    \item Apply $(+\frac{1}{7-5})$-annular Dehn filling to $A^x_7$.  
    \item Apply $(+\frac{1}{10-7})$-annular Dehn filling to $A^x_{10}$. 
\end{itemize} 

For the $y$-annuli: 
\begin{itemize} 
    \item Apply $(+\frac{1}{2})$-annular Dehn filling to $A^y_2$. 
    \item Apply $(+\frac{1}{3-2})$-annular Dehn filling to $A^y_3$. 
    \item Apply $(+\frac{1}{6-3})$-annular Dehn filling to $A^y_6$. 
\end{itemize} 

After the above annular Dehn fillings, apply a Dehn filling to the unknot $V_2$ along the slope $-1$ (see  \reffig{LinkCompleInSFSpaceResized}) to untwist a full twist (as shown in \reffig{SplitTemplateWithLabelsAnnuliKnotRing_AfterDF}). By further Dehn filling the unknots $U$, $V_3$, and $V_6$ trivially, we obtain the modular knot complement $\mathrm{T^1}S_{\textup{mod}}\setminus K$. (See \reffig{SplitTemplateWithLabelsWhole}.)

\begin{lemma} \label{Lem:SelfIntersectionsP(Lp)}
    Let $L$ be a modular link with labelled code words denoted in (\ref{Words}) with $\overline{n}$ denoting the sum of all word period(s). Let $\calP\from M\to B$ be the bundle projection map of the Seifert-fibred space $M\coloneqq \SS^3\setminus \calL_{\textup{SF}}$ of $L$. The parent link $L_{\textup{p}}$ can be ambient-isotoped to a position such that the number of self-intersections of $\calP(L_{\textup{p}})$ is at most $\frac{3}{2}\overline{n}(\overline{n}-1)$. 
\end{lemma}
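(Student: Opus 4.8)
The plan is to realise $\calP(L_{\textup{p}})$ as an explicit planar diagram assembled from the two families of arcs produced in \refdef{ParentLink}: the $\overline{n}$ blue ($x$-)arcs and the $\overline{n}$ red ($y$-)arcs. I would fix the position in which all blue horizontal segments lie below all red horizontal segments, the blue segments occur from bottom to top in the order within the full $x$-bunch and the red segments from top to bottom in the order within the full $y$-bunch (as determined by \refprop{OrderWithinFullBs}). In this position each blue arc $x_i$ runs horizontally out to depth $k_i$ on the left and depth $l_i$ on the right, closing up through the single vertical segment of Step~(3) of \refdef{ParentLink} at right-depth $l_i$; symmetrically, each red arc $y_i$ runs out to depth $l_i$ on the right and to the foot of $x_{i+1}$ on the left, with its vertical at left-depth $k_{i+1}$. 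The point of recording these depths and heights is that every self-intersection then falls into exactly one colour-type: blue--blue, blue--red, or red--red.

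First I would observe that horizontal segments of the same colour are parallel, that blue horizontals lie entirely below red horizontals, and that the only verticals appearing on the left (resp.\ right) are red (resp.\ blue); hence a crossing can occur only between a vertical of one arc and a horizontal of another. The red--red crossings vanish by Remark~\ref{Rmk:NoCrossings}. A blue--blue crossing can occur only in the right half, where the vertical of $x_i$ meets the horizontal of $x_j$, and this happens precisely when $x_j$ sits below $x_i$ in the $x$-bunch order and $l_i\ge l_j$. A blue--red crossing occurs at one of two sites: on the right, the vertical of $x_i$ meets the horizontal of $y_j$ when $y_j$ sits below $y_i$ and $l_j\ge l_i$; on the left, the vertical of $y_j$ meets the horizontal of $x_i$ when $x_i$ sits above $x_{j+1}$ and $k_i\ge k_{j+1}$.

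The crux, which I would establish for each of the three types in turn, is an inversion-counting (mutual-exclusivity) argument. Indexing the crossings of a given type by unordered pairs of positions in the appropriate full bunch---using the cyclic shift $i\mapsto i+1$ to match each red vertical on the left with its target $x$-base---one sees that the two crossings a priori attached to a pair impose opposite order inequalities (one demands that $x_j$ lie below $x_i$, the swapped one that $x_i$ lie below $x_j$, and likewise in the $y$-order), so at most one of them is realised. Each colour-and-side type therefore contributes at most the number of unordered pairs $\binom{\overline{n}}{2}=\tfrac12\overline{n}(\overline{n}-1)$; adding the three contributions (blue--blue, blue--red on the right, blue--red on the left) yields $\tfrac32\overline{n}(\overline{n}-1)$, which is the asserted bound.

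The main obstacle I anticipate is organisational rather than conceptual: I must exhibit a single planar position that simultaneously realises all three bounds, confirm that the horizontal extensions running through the annuli regions create no additional intersections, and deal with the degenerate cases in which two $x$-exponents (or two $y$-exponents) coincide, where the associated verticals would otherwise overlap. For the last point I would appeal to the adjustment prescribed in Step~(4) of \refdef{ParentLink}, which separates coincident verticals without introducing new crossings, so that the mutual-exclusivity count continues to apply and the total self-intersection number of $\calP(L_{\textup{p}})$ remains at most $\tfrac32\overline{n}(\overline{n}-1)$.
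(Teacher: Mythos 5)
Your proposal is correct and takes essentially the same approach as the paper: you build the identical planar diagram from Definition~\ref{Def:ParentLink}, classify every crossing as a vertical segment meeting a horizontal segment of one of three colour types (red--red crossings excluded via Remark~\ref{Rmk:NoCrossings}), and obtain $\binom{\overline{n}}{2}$ per type, hence $\tfrac{3}{2}\overline{n}(\overline{n}-1)$ in total. The only (harmless) differences are bookkeeping: the paper bounds the crossings along each vertical segment (the $m$-th vertical meets at most $m-1$ horizontals) and sums, while you count at most one crossing per unordered pair by mutual exclusivity, and your stated crossing criterion attributes each blue--blue crossing to the wrong member of the pair (the vertical of the \emph{lower} strand crosses the horizontal of the upper one), which does not affect the count.
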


\begin{proof} 
    We ambient-isotope $L_{\textup{p}}$ such that $\calP(L_{\textup{p}})$ is the same as the link diagram we built for defining $L_{\textup{p}}$ in \refdef{ParentLink}. 
    From the construction in \refdef{ParentLink}, $L_{\textup{p}}$ consists of (red) $y$-arcs and (blue) $x$-arcs only (see \reffig{SplitTemplateWithLabelsAnnuliKnotRing} for example).  Each (red) $y$-arc consists of an extended horizontal $y_i$-line segment and a vertical $y_i$-line segment approaching an $x$-annulus from behind. Similarly, each (blue) $x$-arc consists of an extended horizontal $x_i$-line segment and a vertical $x_i$-line segment approaching a $y$-annulus from the front. It suffices to count the maximum possible number of intersection(s) each vertical line segment has and sum them up. 
    
    If $\overline{n}=1$, the statement holds because there is no self-intersection in $\calP(L_{\textup{p}})$ and $\frac{3}{2}\overline{n}(\overline{n}-1) = 0$. Now suppose $\overline{n}\geq 2$. 
    
    Let $(x_{\sigma(1)}, \ldots, x_{\sigma(\overline{n})})$ and $(y_{\tau(1)}, \ldots, y_{\tau(\overline{n})})$ be orders within the full $x$-bunch and full $y$-bunch in the modular link $L$ respectively.
    
    By Remark~\ref{Rmk:NoCrossings}, there is no intersection among the (red) $y$-arcs. Thus, for (red) vertical $y$-line segment(s), we only need to consider their intersections with the (blue) horizontal $x$-line segment(s).  
    For each $m\in[2,\overline{n}]\cap\ZZ$, the (red) vertical $y_{\tau(m)}$-line segment has at most $(m-1)$ intersection(s) with the (blue) horizontal line segment(s). Thus, there are at most $1+\ldots+(\overline{n}-1) = \frac{1}{2}\overline{n}(\overline{n}-1)$ intersections coming from these (red) vertical line segments.  

    Similarly, the set of all (blue) vertical $x$-line segments has at most $\frac{1}{2}\overline{n}(\overline{n}-1)$ intersections with the (red) horizontal $y$-line segments and at most $\frac{1}{2}\overline{n}(\overline{n}-1)$ intersections with the (blue) horizontal $x$-line segments. 

    Hence, the number of self-intersections of $\calP(L_{\textup{p}})$ is at most $\frac{3}{2}\overline{n}(\overline{n}-1)$.  
\end{proof}

\begin{theorem}   \label{Thm:QuadraticUpperBound}
If $L$ is a modular link with the sum of word period(s) equal $\overline{n}$, then the hyperbolic volume of the corresponding modular link complement satisfies the following inequality: 
\[\mathrm{Vol}(\mathrm{T^1}S_{\textup{mod}} \setminus L) \leq 12 v_{\textup{tet}} (\overline{n}^2+3\overline{n}+2), \]
where $v_{\textup{tet}}\approx 1.01494$ is the volume of the regular ideal tetrahedron. 
\end{theorem}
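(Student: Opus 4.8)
The plan is to bound the hyperbolic volume from above by the simplicial volume of the parent manifold and then invoke \refthm{SelfIntersectionNumber}. By \refprop{DehnFillParentMfd}, the complement $\mathrm{T^1}S_{\textup{mod}}\setminus L$ is obtained from the parent manifold $\SS^3\setminus(\calL_{\textup{SF}}\cup\calL_L)$ by a finite sequence of Dehn fillings. Since $\mathrm{T^1}S_{\textup{mod}}\setminus L$ is a finite-volume hyperbolic manifold, the Gromov--Thurston relation gives $\mathrm{Vol}(\mathrm{T^1}S_{\textup{mod}}\setminus L)=v_{\textup{tet}}\norm{\mathrm{T^1}S_{\textup{mod}}\setminus L}$, and because simplicial volume is non-increasing under Dehn filling \cite{Thurston:Geom&TopOf3Mfd} we have $\norm{\mathrm{T^1}S_{\textup{mod}}\setminus L}\leq\norm{\SS^3\setminus(\calL_{\textup{SF}}\cup\calL_L)}$. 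Hence it suffices to bound the simplicial volume of the parent manifold.

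Next I would set $M\coloneqq\SS^3\setminus\calL_{\textup{SF}}$, the Seifert-fibred space of $L$, whose base is a $2(\#k+\#l)$-punctured disc; as $\#k,\#l\geq 1$ this disc has at least four punctures and is therefore a hyperbolic $2$-orbifold, so \refthm{SelfIntersectionNumber} applies to the link $\calL_L=L_{\textup{p}}\cup T\cup(V_{a_1}\cup\cdots\cup V_{a_{\#l}})$ inside $M$. After a small isotopy one checks that the projected loops $\calP(\calL_L)$ meet themselves transversely in finitely many double points, each with exactly two preimages, as required. The theorem then yields $\norm{M\setminus\calL_L}\leq 8\,\iota(\calP(\calL_L),\calP(\calL_L))$, so everything reduces to the combinatorial estimate $\iota(\calP(\calL_L),\calP(\calL_L))\leq\tfrac{3}{2}(\overline{n}+1)(\overline{n}+2)$.

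For the crossing count I would split the double points of $\calP(\calL_L)$ into those of $\calP(L_{\textup{p}})$ alone and the remaining ones created once $T$ and the rings $V_{a_i}$ are added. The first group is bounded by \reflem{SelfIntersectionsP(Lp)}, giving at most $\tfrac{3}{2}\overline{n}(\overline{n}-1)$. For the second group I would argue that the rings $V_{a_i}$ are embedded loops whose projections can be taken pairwise disjoint and disjoint from $\calP(T)$, each crossing only the vertical blue segments it encircles, so that summing over all rings gives $O(\overline{n})$ crossings against $\calP(L_{\textup{p}})$; the trefoil $T$ contributes a constant number of self-crossings together with $O(\overline{n})$ crossings against the $2\overline{n}$ horizontal and vertical arcs of $L_{\textup{p}}$. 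The claim is that these extra double points total at most $6\overline{n}+3$, whence $\iota(\calP(\calL_L),\calP(\calL_L))\leq\tfrac{3}{2}\overline{n}(\overline{n}-1)+6\overline{n}+3=\tfrac{3}{2}(\overline{n}+1)(\overline{n}+2)$. Substituting into the simplicial volume bound and multiplying by $v_{\textup{tet}}$ yields precisely $12 v_{\textup{tet}}(\overline{n}^2+3\overline{n}+2)$.

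The main obstacle is the uniform bookkeeping for this second group: one must show, independently of the word exponents, that $T$ and the rings together never produce more than $6\overline{n}+3$ additional double points. This requires pinning down how $\calP(T)$ is routed relative to the two families of concentric annuli, and verifying that the rings $V_{a_i}$ can simultaneously be isotoped to be mutually disjoint, disjoint from $T$, and compatible with the isotopy of \reflem{SelfIntersectionsP(Lp)} used to optimise $\calP(L_{\textup{p}})$. Checking the hypotheses of \refthm{SelfIntersectionNumber} for the fully assembled diagram — transversality and exactly two preimages per double point — is routine once this placement is fixed.
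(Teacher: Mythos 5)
Your proposal follows essentially the same route as the paper's proof: reduce to the parent manifold via \refprop{DehnFillParentMfd} and volume monotonicity under Dehn filling, bound the parent link's projected self-crossings by \reflem{SelfIntersectionsP(Lp)}, add the trefoil and untwisting-unknot crossings (your $6\overline{n}+3$ is exactly the paper's $(4\overline{n}+3)+2\overline{n}$), and conclude with \refthm{SelfIntersectionNumber}. The bookkeeping you flag as the ``main obstacle'' is resolved in the paper by the same decomposition you describe, with the trefoil accounting for $4\overline{n}+3$ crossings and the unknots for $2\overline{n}$ (each $V_j$ crossing twice the blue vertical segments it encircles), so your argument is correct and matches the published one.
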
 

\begin{proof}
By \refprop{DehnFillParentMfd}, the modular link complement $\mathrm{T^1}S_{\textup{mod}} \setminus L$ can be obtained by applying Dehn fillings on its parent manifold $\SS^3\setminus(\calL_{\textup{SF}}\cup \calL_L)$. As Dehn filling decreases volume  \cite{Thurston:Geom&TopOf3Mfd}, it suffices to show the simplicial volume of the parent manifold is bounded above by the quadratic polynomial of $\overline{n}$. 

By \reflem{SelfIntersectionsP(Lp)}, the parent link $L_{\textup{p}}$ can be ambient-isotoped to a position such that the number of self-intersections of $\calP(L_{\textup{p}})$ is at most $\frac{3}{2}\overline{n}(\overline{n}-1)$. Using the same link projection, we can ambient-isotope the trefoil knot $T$ such that it accounts for $4\overline{n}+3$ self-intersections of $\calP(\calL_L)$.  The unknot(s) $V_{a_1}, \ldots, V_{a_{\#l}}$ for untwisting undesired full twist(s) account(s) for the $2\overline{n}$ self-intersections of $\calP(\calL_L)$ because there are a total of $\overline{n}$ (blue) vertical line segment(s) approaching the $y$-annuli and each unknot has intersection number twice the number of strands it encircles.  
(See \reffig{LinkCompleInSFSpaceResizedProjection} for example.)

Hence the total number of self-intersections of $\calP(\calL_L)$ is 
\[\frac{3}{2}\overline{n}(\overline{n}-1) + (4\overline{n}+3) + 2\overline{n} = \frac{3}{2}\overline{n}^2+\frac{9}{2}\overline{n}+3\]

By \refthm{SelfIntersectionNumber} (Theorem~1.5 in \cite{Cremaschi-RodriguezMigueles:HypOfLinkCpmInSFSpaces}), we have 
\[\norm{\SS^3\setminus(\calL_{\textup{SF}}\cup \calL_L)} \leq 8 \ \iota(\calP(\calL_L),\calP(\calL_L)) \leq 12 (\overline{n}^2+3\overline{n}+2).\]

As Dehn filling decreases volume \cite[Proposition~6.5.2]{Thurston:Geom&TopOf3Mfd}, it follows that
\[\mathrm{Vol}(\mathrm{T^1}S_{\textup{mod}} \setminus L)=v_{\textup{tet}}\norm{\mathrm{T^1}S_{\textup{mod}} \setminus L}\leq v_{\textup{tet}} \norm{\SS^3\setminus(\calL_{\textup{SF}}\cup \calL_L)} \]
\[\leq 12 \ v_{\textup{tet}} (\overline{n}^2+3\overline{n}+2),\] 
an upper bound that grows quadratically with the sum of word period(s). 
\end{proof} 

Since Dehn filling decreases volume \cite[Proposition~6.5.2]{Thurston:Geom&TopOf3Mfd} and each Lorenz link complement can be obtained by applying trivial Dehn filling to the trefoil cusp of the corresponding modular link complement, we have the following corollary. 

\begin{corollary}   \label{Cor:QuadraticUpperBound}
If $L$ is a hyperbolic Lorenz link with braid index equal $\overline{n}$, then we have the following inequality: 
\[\mathrm{Vol}(\SS^3 \setminus L) \leq 12 v_{\textup{tet}} (\overline{n}^2+3\overline{n}+2), \]
where $v_{\textup{tet}}\approx 1.01494$ is the volume of the regular ideal tetrahedron.
\end{corollary}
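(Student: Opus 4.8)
The plan is to obtain this directly from \refthm{QuadraticUpperBound} via a single trivial Dehn filling, so the argument should be short. First I would fix the geometric dictionary: because $\mathrm{T^1}S_{\textup{mod}}$ is homeomorphic to the trefoil complement in $\SS^3$, the modular link complement $\mathrm{T^1}S_{\textup{mod}}\setminus L$ is the same manifold as $\SS^3\setminus(L\cup T)$, where $T$ denotes the trefoil knot. The Lorenz link complement $\SS^3\setminus L$ is then recovered from $\SS^3\setminus(L\cup T)$ by filling the cusp corresponding to $T$ along its meridian slope, that is, by the trivial Dehn filling that reinstates a tubular neighbourhood of $T$. I would state this identification carefully, since it is the one place where the correspondence between the two complements must be pinned down.

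Next I would invoke the quantitative input. By Remark~\ref{Rmk:TripNum_BraidIndex_PeriodContFraction}, the braid index of $L$ in $\SS^3$ equals the trip number of $L$, which in turn equals the sum of word period(s); hence the hypothesis ``braid index equal $\overline{n}$'' means precisely that the sum of word period(s) of $L$ is $\overline{n}$, so that \refthm{QuadraticUpperBound} applies verbatim and gives
\[\mathrm{Vol}(\mathrm{T^1}S_{\textup{mod}}\setminus L)\leq 12\, v_{\textup{tet}}\,(\overline{n}^2+3\overline{n}+2).\]

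Finally, I would close the argument using monotonicity of volume under Dehn filling. Since Dehn filling does not increase hyperbolic volume \cite{Thurston:Geom&TopOf3Mfd}, the trivial filling on the trefoil cusp yields $\mathrm{Vol}(\SS^3\setminus L)\leq \mathrm{Vol}(\mathrm{T^1}S_{\textup{mod}}\setminus L)$; chaining this with the displayed inequality gives the stated bound. The cleanest way to phrase the monotonicity step is through the Gromov norm: any Dehn filling satisfies $\norm{\SS^3\setminus L}\leq\norm{\mathrm{T^1}S_{\textup{mod}}\setminus L}$, and $\mathrm{Vol}=v_{\textup{tet}}\norm{\cdot}$ for the hyperbolic manifolds in question, so no hyperbolicity of intermediate objects is needed beyond the standing hypothesis that $L$ is a hyperbolic Lorenz link. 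I do not anticipate a genuine obstacle; the only points requiring care are the trivial-filling identification of $\SS^3\setminus L$ with the filled trefoil cusp and confirming that the braid index coincides with the $\overline{n}$ of \refthm{QuadraticUpperBound}.
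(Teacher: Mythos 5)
Your proposal is correct and is essentially the paper's own argument: the paper obtains this corollary in one line by noting that each Lorenz link complement results from trivially Dehn filling the trefoil cusp of the corresponding modular link complement, and that Dehn filling decreases volume, with the identification of braid index and sum of word period(s) supplied by Remark~\ref{Rmk:TripNum_BraidIndex_PeriodContFraction}. Your extra care in phrasing the monotonicity via the Gromov norm is a harmless refinement of the same route.
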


\begin{figure}
%% Creator: Inkscape 1.1.2 (b8e25be833, 2022-02-05), www.inkscape.org
%% PDF/EPS/PS + LaTeX output extension by Johan Engelen, 2010
%% Accompanies image file 'LinkCompleInSFSpaceResizedProjection.pdf' (pdf, eps, ps)
%%
%% To include the image in your LaTeX document, write
%%   \input{<filename>.pdf_tex}
%%  instead of
%%   \includegraphics{<filename>.pdf}
%% To scale the image, write
%%   \def\svgwidth{<desired width>}
%%   \input{<filename>.pdf_tex}
%%  instead of
%%   \includegraphics[width=<desired width>]{<filename>.pdf}
%%
%% Images with a different path to the parent latex file can
%% be accessed with the `import' package (which may need to be
%% installed) using
%%   \usepackage{import}
%% in the preamble, and then including the image with
%%   \import{<path to file>}{<filename>.pdf_tex}
%% Alternatively, one can specify
%%   \graphicspath{{<path to file>/}}
%% 
%% For more information, please see info/svg-inkscape on CTAN:
%%   http://tug.ctan.org/tex-archive/info/svg-inkscape
%%
\begingroup%
  \makeatletter%
  \providecommand\color[2][]{%
    \errmessage{(Inkscape) Color is used for the text in Inkscape, but the package 'color.sty' is not loaded}%
    \renewcommand\color[2][]{}%
  }%
  \providecommand\transparent[1]{%
    \errmessage{(Inkscape) Transparency is used (non-zero) for the text in Inkscape, but the package 'transparent.sty' is not loaded}%
    \renewcommand\transparent[1]{}%
  }%
  \providecommand\rotatebox[2]{#2}%
  \newcommand*\fsize{\dimexpr\f@size pt\relax}%
  \newcommand*\lineheight[1]{\fontsize{\fsize}{#1\fsize}\selectfont}%
  \ifx\svgwidth\undefined%
    \setlength{\unitlength}{273.24527717bp}%
    \ifx\svgscale\undefined%
      \relax%
    \else%
      \setlength{\unitlength}{\unitlength * \real{\svgscale}}%
    \fi%
  \else%
    \setlength{\unitlength}{\svgwidth}%
  \fi%
  \global\let\svgwidth\undefined%
  \global\let\svgscale\undefined%
  \makeatother%
  \begin{picture}(1,0.25660948)%
    \lineheight{1}%
    \setlength\tabcolsep{0pt}%
    \put(0,0){\includegraphics[width=\unitlength,page=1]{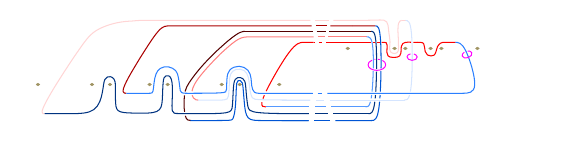}}%
    \put(0.67439754,0.11894461){\color[rgb]{1,0,1}\makebox(0,0)[lt]{\lineheight{1.25}\smash{\begin{tabular}[t]{l}\fontsize{5pt}{1em}$V_2$\end{tabular}}}}%
    \put(0.72754125,0.13459512){\color[rgb]{1,0,1}\makebox(0,0)[lt]{\lineheight{1.25}\smash{\begin{tabular}[t]{l}\fontsize{5pt}{1em}$V_3$\end{tabular}}}}%
    \put(0.83057496,0.14165178){\color[rgb]{1,0,1}\makebox(0,0)[lt]{\lineheight{1.25}\smash{\begin{tabular}[t]{l}\fontsize{5pt}{1em}$V_6$\end{tabular}}}}%
    \put(0,0){\includegraphics[width=\unitlength,page=2]{LinkCompleInSFSpaceResizedProjection.pdf}}%
    \put(0.94832263,0.08425077){\color[rgb]{0.50588235,0.50588235,0.50588235}\makebox(0,0)[lt]{\lineheight{1.25}\smash{\begin{tabular}[t]{l}$U$\end{tabular}}}}%
  \end{picture}%
\endgroup%

\caption{The projection of the link $\calL_K = K_{\textup{p}} \cup T \cup V_2 \cup V_3\cup V_6$ in the punctured disc bounded by the unknot $U$.  
\label{Fig:LinkCompleInSFSpaceResizedProjection}}
\end{figure}

\section{A classification of modular link complements} \label{Sec:ClassifyingModLinks}
From \refdef{ParentLink} and \refprop{DehnFillParentMfd}, we observe that the set of all modular link complements can be partitioned into classes of modular link complements that have the same parent manifold, the construction of which depends only on the base orders (see \refdef{BaseOrders}) of the modular link.  

\begin{definition}[Base orders of a modular link]  \label{Def:BaseOrders}
Let $L$ be a modular link with the collection $\mathcal{C}=\{w_1, w_2, \ldots, w_c\}$ of labelled code words denoted in (\ref{Words}). Suppose $(d_{\#k}, \ldots, d_1)$ is the descending order of the $\#k$ distinct $x$-exponents of all words of $L$ and $(a_{1}, \ldots, a_{\#l})$ is the ascending order of the $\#l$ distinct $y$-exponents of all words of $L$.  
\begin{itemize}
    \item The \emph{$x$-base order} of $\mathcal{C}$ is the set of $c$ ordered $\#k$-tuple(s) $(X^i_{d_{\#k}}, \ldots, X^i_{d_1})$ such that each set $X^i_{d_j}$ is the set of all $x$-base(s) of $w_i$ that has/have exponent(s) equal to $d_j$.  
    \item The \emph{$y$-base order} of $\mathcal{C}$ is the set of $c$ ordered $\#l$-tuple(s) $(Y^i_{a_1}, \ldots, Y^i_{a_{\#l}})$ such that each set $Y^i_{a_j}$ is the set of all $y$-base(s) of $w_i$ that has/have exponent(s) equal to $a_j$. 
\end{itemize} 

The \emph{$x$-base order of a modular link} $L$ is the $x$-base order of $\mathcal{C}$ up to any order of words and any cyclic permutations of each word. By abuse of notation, we sometimes use one representative of the collection to mean the whole collection.  Similarly, we can define the \emph{$y$-base order of a modular link}. For simplicity, we sometimes omit the outermost curly brackets when the number of link component(s) $c=1$. 
\end{definition} 

 For example, suppose $\mathcal{C} = \{w={x_1}^{10}{y_1}^2{x_2}^5{y_2}^2{x_3}^7{y_3}^6{x_4}^2{y_4}^2{x_5}^5{y_5}^3\}$, the $x$-base order of $\mathcal{C}$ is $(X_{10}, X_{7}, X_{5}, X_{2})=(\{x_1\},\{x_3\},\{x_2,x_5\},\{x_4\})$ and the $y$-base order of $\mathcal{C}$ is $(Y_{2}, Y_{3}, Y_{6}) = (\{y_1, y_2, y_4\},\{y_5\},\{y_3\})$. 

 Let us look at another example with more than one link component. Suppose $\mathcal{C}' = \{w_1, w_2, w_3\}$, where 
 \begin{align*}
     w_1 &= x_1^{100}y_1^{10}x_2^{500}y_2^{50}, \\
     w_2 &= x_3^{200}y_3^{20}x_4^{400}y_4^{40}, \text{ and } \\ 
     w_3 &= x_5^{300}y_5^{30}. 
 \end{align*}
The $x$-base order of $\mathcal{C}'$ is 
\begin{align*}
    \left\{ 
    \begin{array}{cc}
         (X^1_{500}, X^1_{400}, X^1_{300}, X^1_{200}, X^1_{100}), \\
         (X^2_{500}, X^2_{400}, X^2_{300}, X^2_{200}, X^2_{100}), \\
         (X^3_{500}, X^3_{400}, X^3_{300}, X^3_{200}, X^3_{100})
    \end{array}
   \right\}
    = \left\{
    \begin{array}{ccccc}
       (\{x_2\},\{\}, \{\}, \{\}, \{x_1\}), \\
       (\{\},\{x_4\}, \{\}, \{x_3\}, \{\}), \\
       (\{\},\{\}, \{x_5\}, \{\}, \{\})
    \end{array} 
    \right\}.  
\end{align*} 
The $y$-base order of $\mathcal{C}'$ is 
\begin{align*}
    \left\{ 
    \begin{array}{c}
         (Y^1_{50}, Y^1_{40}, Y^1_{30}, Y^1_{20}, Y^1_{10}), \\
         (Y^2_{50}, Y^2_{40}, Y^2_{30}, Y^2_{20}, Y^2_{10}), \\
         (Y^3_{50}, Y^3_{40}, Y^3_{30}, Y^3_{20}, Y^3_{10})
    \end{array}
   \right\}
    = \left\{
    \begin{array}{c}
       (\{y_2\},\{\}, \{\}, \{\}, \{y_1\}), \\
       (\{\},\{y_4\}, \{\}, \{y_3\}, \{\}), \\
       (\{\},\{\}, \{y_5\}, \{\}, \{\}) 
    \end{array} 
    \right\}.  
\end{align*} 
    
    The following proposition shows that the ambient-isotopy class of the parent link of a modular link $L$ in $\SS^3\setminus(\calL_{\textup{SF}}\cup T)$ depends only on the base orders of $L$. 

\begin{proposition} \label{Prop:SameParentLinks}
    Let $L_1$ and $L_2$ be two (possibly distinct) modular links. If the $x$-base order and $y$-base order of $L_1$ equal those of $L_2$, then the parent links of $L_1$ and $L_2$ are ambient isotopic in $\SS^3\setminus(\calL_{\textup{SF}}\cup T)$.  
\end{proposition}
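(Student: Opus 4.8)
The plan is to show that every ingredient of the parent-link construction in \refdef{ParentLink} is either literally a function of the base orders or can be brought to a common normal form by an ambient isotopy of $\SS^3\setminus(\calL_{\textup{SF}}\cup T)$ that itself depends only on the base orders; the two parent links then share this normal form.

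First I would isolate the data that the base orders fix outright. The link $\calL_{\textup{SF}}$ is assembled from the distinct exponents $d_1,\dots,d_{\#k}$ and $a_1,\dots,a_{\#l}$ together with the number of bases carrying each value, so equal base orders force $L_1$ and $L_2$ to share $\calL_{\textup{SF}}$ and the trefoil $T$, hence a common ambient manifold. By Step~(2a) of \refdef{ParentLink} an $x_i$-arc meets exactly the annuli $A^x_{d_1},\dots,A^x_{k_i}$, so the number of strands threading each annulus---and with it the class of the parent link in $H_1\!\left(\SS^3\setminus(\calL_{\textup{SF}}\cup T)\right)$, computed from its linking numbers with the annulus boundaries and with $T$---is already determined by the base orders. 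This records the homological invariant and reduces the statement to normalising the remaining, word-sensitive data.

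The word-sensitive data are the left-to-right order of the horizontal segments in Step~(1), dictated by the orders within the full bunches, which by \refprop{OrderWithinFullBs} depend on how $x$- and $y$-exponents are paired, and the cyclic connection pattern of Step~(3). The combinatorial heart of the reduction is that, unwinding \refdef{BaseOrders}, two modular links share both base orders precisely when, after reordering components, their cyclic sequences of $x$-exponents agree up to rotation and their cyclic sequences of $y$-exponents agree up to rotation, the two rotations being independent. It therefore suffices to realise, by an ambient isotopy, a single elementary move that advances the $y$-sequence one slot relative to the $x$-sequence, reconnecting each $x_i$-arc to the $y$-arc formerly attached at the next slot. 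I would realise this move using Remark~\ref{Rmk:NoCrossings} and Step~(4b): the $y$-arcs are pairwise disjoint in the projection and run beneath every $x$-arc, so a $y$-arc together with its vertical connectors may be slid beneath the $x$-family---without meeting an annulus core or the trefoil and without changing any crossing---to reattach one slot further along. Advancing all $y$-connectors in this way carries the diagram built for $L_1$ to the one built for $L_2$, and iterating covers every relative rotation; the independent word-reordering is realised by simply permuting components.

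The step I expect to be the main obstacle is this last one: confirming that the reconnection is an honest ambient isotopy and not a disguised crossing change. The delicate point is that a $y$-connector must be dragged through the region behind the $x$-annuli, past the left ends of the $x$-arcs that thread those annuli. To make this precise I would cut $\SS^3\setminus(\calL_{\textup{SF}}\cup T)$ along the disc bounded by $U$ and analyse the tangle in the product region between the two annulus packets: there every component is unknotted and, by the layering of Step~(4b), the red and blue strands occupy separated levels, so the red extensions sweep across the blue left-ends always from below and the slide meets no obstruction. Verifying that this separation persists all the way into the $x$-annulus packet---so that any two connection patterns compatible with the fixed, base-order-determined threading are isotopic in the complement---is the crux; granting it, assembling the normalised middle tangle with the base-order-determined threading inside the two packets produces a parent link depending only on the base orders and furnishes the desired ambient isotopy between the parent links of $L_1$ and $L_2$.
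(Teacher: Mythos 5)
Your proposal breaks at what you yourself call its ``combinatorial heart,'' and the break is not repairable along the route you chose. Unwinding \refdef{BaseOrders}, having equal base orders is \emph{not} equivalent to the cyclic sequences of $x$-exponents and of $y$-exponents agreeing up to two \emph{independent} rotations. The base orders are tuples of sets of \emph{labelled} bases, and a cyclic permutation of a labelled word shifts the $x$-labels and the $y$-labels simultaneously, so the pairing between the rank of $k_i$ and the rank of $l_i$ is part of the data being fixed. Concretely, $x_1^2y_1^1x_2^1y_2^2$ and $x_1^2y_1^2x_2^1y_2^1$ are related by rotating the $y$-sequence one slot while fixing the $x$-sequence, yet their base orders differ: the $y$-base orders are $(\{y_1\},\{y_2\})$ and $(\{y_2\},\{y_1\})$, and the nontrivial cyclic permutation repairs the $y$-discrepancy only at the cost of creating an $x$-discrepancy. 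These are distinct modular knots whose full-bunch orders differ ($(x_1,x_2)$ versus $(x_2,x_1)$ by \refprop{OrderWithinFullBs}) and whose threading data differ, so your reduction replaces the proposition by a strictly stronger claim that the definition was designed to exclude. Moreover, your argument for that stronger claim is not an argument: ``reconnecting each $x_i$-arc to the $y$-arc formerly attached at the next slot'' is not an ambient isotopy. An ambient isotopy moves the closed curve as a whole; a sub-arc cannot detach from its endpoints mid-isotopy, so a slide that ends with different attachments is a recombination (a surgery on the link), not an isotopy. You explicitly ``grant'' the crux rather than prove it, and the crux as you state it (``any two connection patterns compatible with the fixed threading are isotopic'') is false: different connection patterns on the same horizontal segments can even produce parent links with different numbers of components.

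What the proposition actually needs is much less, and your first paragraph is already adjacent to it. After matching components and choosing cyclic permutations so that the labelled base orders of $L_1$ and $L_2$ coincide verbatim, \emph{every} choice made in \refdef{ParentLink} is a function of that shared data: the orders within the full bunches (the algorithm of \refprop{OrderWithinFullBs} uses only comparisons between exponents, i.e.\ ranks, never their values), the set of annuli each horizontal segment threads (the $x_i$-segment meets as many $x$-annuli as the rank of $k_i$ among the distinct $x$-exponents, and likewise for $y_j$), the connection pattern of Step~(3) (read off from the labels), and the crossing conventions of Step~(4) (fixed once and for all, using Remark~\ref{Rmk:NoCrossings}). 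Hence the two constructions output literally the same diagram in the punctured disc, i.e.\ the same parent link up to ambient isotopy in $\SS^3\setminus(\calL_{\textup{SF}}\cup T)$; no normal-form slides are required. This one-observation argument is the paper's proof, and it is where your proposal should have stopped.
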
 

\begin{proof}
    By \refdef{ParentLink}, each parent link can be constructed uniquely up to ambient isotopy in $\SS^3\setminus(\calL_{\textup{SF}}\cup T)$ based only on the orders within full bunches (for Step~1 in \refdef{ParentLink}), the relative magnitudes (i.e., descending or ascending orders) of the $x$-exponents, and the relative magnitudes of the $y$-exponents (for Step~2 in \refdef{ParentLink}). For example, if we change the exponent of $x^1$ in \reffig{ParentKnotSteps_New} from $10$ to $987$, then all annulus label $A^x_{10}$ will be changed to $A^x_{987}$, but this does not change the orders within full bunches in Step~1 of the construction (\reffig{Step1}) and does not change the horizontal extensions in Step~2a (\reffig{Step2ab} (left)), this results in the same parent knot. Note that the construction of a parent link does not depend directly on the absolute magnitudes of the exponents nor the differences between the exponents.  

    The order within the full $x$-bunch is determined by the ascending order of the $y$-exponents and the descending order of the $x$-exponents, which in turn depend solely on the $x$-base order and $y$-base order of the modular link. Similarly, the order within the full $y$-bunch depends solely on the $x$-base order and $y$-base order of the modular link as well.  Hence, any two modular links with the same base orders share the same parent link up to ambient isotopy. 
\end{proof} 

For example, the words $x_1^2y_1x_2y_2x_3y_3$ and $x_1^3 y_1^5 x_2^2 y_2^5 x_3^2 y_3^5$ represent two different modular knots, both of them have $x$-base order equal $(\{x_1\},\{x_2, x_3\})$ and $y$-base order equal $(\{y_1, y_2, y_3\})$, so by \refprop{SameParentLinks}, their parent knots are the same (up to ambient isotopy) in $\SS^3\setminus(\calL_{\textup{SF}}\cup T)$.

We say two modular link complements belong to the same \emph{class} if they have the same base orders of their corresponding modular links.  

\begin{theorem}[Classification] \label{Thm:Classification}
   All modular link complements can be partitioned into classes. All members in each class share the same parent manifold (up to homeomorphism) and the same upper volume bound $8 \hspace{0.5mm} v_{\textup{tet}} \hspace{0.5mm} \iota(\calP(\calL),\calP(\calL))$, where $\calL$ consists of the trefoil knot $T$, the parent link $L_{\textup{p}}$ and any unknot(s) $V_i$ that encircle(s) two or more vertical line segments of $L_{\textup{p}}$ approaching the same $y$-annulus. 
\end{theorem}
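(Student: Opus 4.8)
The plan is to establish the three assertions of the statement in turn: that ``having the same base orders'' partitions the modular link complements, that the members of each class share a parent manifold up to homeomorphism, and that a single volume bound governs the class. The first assertion is formal: equality of the $x$-base order and $y$-base order of \refdef{BaseOrders} is reflexive, symmetric and transitive, hence an equivalence relation, and the induced partition is the one claimed. The real content is in the remaining two assertions, and I would organise the argument around the principle that every ingredient of the parent manifold is produced by a combinatorial recipe whose sole input is the base orders.

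Let $L_1$ and $L_2$ lie in the same class, so their $x$- and $y$-base orders coincide; in particular the numbers $\#k$ and $\#l$ of distinct $x$- and $y$-exponents agree, since they are the lengths of the base-order tuples. First I would observe that the Seifert-fibred space $\SS^3\setminus\calL_{\textup{SF}}$ of \refdef{SFSpaceOfModularLink} depends only on $\#k$ and $\#l$: the annuli are concentric in the two ears, so their boundaries together with $U$ are isotopic in $\SS^3$ to a configuration determined entirely by how many annuli sit on each side, not by the exponent values. Thus $L_1$ and $L_2$ share $\calL_{\textup{SF}}$ up to ambient isotopy. Next, \refprop{SameParentLinks} supplies an ambient isotopy of $\SS^3\setminus(\calL_{\textup{SF}}\cup T)$ carrying the parent link of $L_1$ to that of $L_2$. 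Finally, the unknots $V_{a_1},\ldots,V_{a_{\#l}}$ of \refdef{ParentMfd} are prescribed by which vertical segments of the parent link approach each $y$-annulus, and this incidence data is read off from the orders within the full bunches, hence from the base orders; so it matches for $L_1$ and $L_2$. Assembling these pieces into one ambient isotopy of $\SS^3$ that fixes $\calL_{\textup{SF}}$ and carries $\calL_{L_1}$ onto $\calL_{L_2}$ produces a homeomorphism of the two parent manifolds.

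I expect the main obstacle to be the compatibility of these moves: \refprop{SameParentLinks} only isotopes the parent links within $\SS^3\setminus(\calL_{\textup{SF}}\cup T)$, and I must check that the same (or a slightly modified) isotopy also transports the $V_j$ and leaves the identification of the Seifert-fibred spaces intact. The cleanest route, which avoids patching together three separate isotopies, is to note that \refdef{SFSpaceOfModularLink}, \refdef{ParentLink} and \refdef{ParentMfd} together define $\calL_{\textup{SF}}\cup\calL_L$ as a deterministic function of the base-order data; two links with equal base orders are therefore drawn by the identical diagram, and the desired ambient isotopy is immediate.

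For the volume bound, homeomorphic parent manifolds have equal simplicial volume, so it suffices to treat one representative. Following \refprop{DehnFillParentMfd}, each unknot $V_i$ that encircles a single vertical segment is Dehn filled trivially, and trivial filling simply deletes that component without altering the filled manifold; hence the modular link complement is also obtained by Dehn filling $\SS^3\setminus(\calL_{\textup{SF}}\cup\calL)$, where $\calL$ consists of $T$, the parent link, and only those $V_i$ encircling two or more segments. Applying \refthm{SelfIntersectionNumber} gives $\norm{\SS^3\setminus(\calL_{\textup{SF}}\cup\calL)}\le 8\,\iota(\calP(\calL),\calP(\calL))$, and since Dehn filling does not increase volume, Gromov--Thurston proportionality yields $\mathrm{Vol}(\mathrm{T^1}S_{\textup{mod}}\setminus L)\le 8\,v_{\textup{tet}}\,\iota(\calP(\calL),\calP(\calL))$. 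Because the projection $\calP(\calL)$ is itself determined by the base orders, the self-intersection number is constant on the class, so this is a single bound shared by all its members.
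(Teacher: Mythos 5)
Your proof is correct and follows essentially the same route as the paper's: the partition is formal, \refprop{SameParentLinks} yields ambient-isotopic parent links and hence homeomorphic parent manifolds, and \refthm{SelfIntersectionNumber} combined with volume-decreasing Dehn fillings (\refprop{DehnFillParentMfd}) gives the shared bound. Your additional observation, that the unknots $V_i$ encircling only one vertical segment can be removed by trivial filling before invoking \refthm{SelfIntersectionNumber}, is a worthwhile clarification of a point the paper's proof passes over, since the parent manifold of \refdef{ParentMfd} is drilled along all the $V_{a_j}$ whereas the link $\calL$ in the stated bound omits those encircling a single segment.
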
 

\begin{proof}
    By \refdef{BaseOrders}, each modular link $L$ with a given collection of words has a unique $x$-base order of $L$ and a unique $y$-base order of $L$. We can thus partition the set of all modular link complements such that members in each class have the same $x$-base order and the same $y$-base order. 

    By \refprop{SameParentLinks}, the parent links of the members in each class are ambient isotopic in $\SS^3\setminus(\calL_{\textup{SF}}\cup T)$. Hence, all members in each class share the same parent link $L_\textup{p}$. The resultant parent manifolds (see \refdef{ParentMfd}) are thus homeomorphic. 

    By \refthm{SelfIntersectionNumber}, the hyperbolic volume of the parent manifold is bounded above by $8 \hspace{0.5mm} v_{\textup{tet}} \hspace{0.5mm} \iota(\calP(\calL),\calP(\calL))$. Note that Dehn filling decreases volume \cite{Thurston:Geom&TopOf3Mfd}.  By \refprop{DehnFillParentMfd}, the hyperbolic volume of each member in the class is bounded above by $8 \hspace{0.5mm} v_{\textup{tet}} \hspace{0.5mm} \iota(\calP(\calL),\calP(\calL))$. 
\end{proof}

For example, the class (or family) of all modular knot complements with
\begin{itemize}
    \item $x$-base order $(\{x_1\},\{x_3\},\{x_2,x_5\},\{x_4\})$ and 
    \item $y$-base order $(\{y_1, y_2, y_4\},\{y_5\},\{y_3\})$
\end{itemize}
have parent knots and parent manifolds the same as the ones in \reffig{SplitTemplateWithLabelsAnnuliKnotRing}. Examples of code words associated with the members of the class are 
\begin{itemize}
    \item ${x_1}^{10}{y_1}^2{x_2}^5{y_2}^2{x_3}^7{y_3}^6{x_4}^2{y_4}^2{x_5}^5{y_5}^3$
    \item ${x_1}^{12}{y_1}^2{x_2}^4{y_2}^2{x_3}^8{y_3}^6{x_4}^1{y_4}^2{x_5}^4{y_5}^3$ 
    \item ${x_1}^{12}{y_1}^{222}{x_2}^4{y_2}^{222}{x_3}^8{y_3}^{1729}{x_4}^1{y_4}^{222}{x_5}^4{y_5}^{345}$
\end{itemize} 

The three code words have the same parent manifolds because the relative magnitudes of their exponents remain the same, i.e., in each of the three code words above, $x_1$ has the maximal exponent among all $x$-bases, $x_3$ has the second largest exponent, $x_2$ and $x_5$ share the third largest exponent, and $x_4$ has the smallest exponent.  

\section{Volume bounds that are linear in word periods} \label{Sec:LinearInPeriod} 

Rodr\'{\i}guez-Migueles showed in \cite[Theorem 1.4]{RodriguezMigueles:PeriodsOfContinuedFractions} that there exists a sequence of modular knot complements with words of the form $x^{k_1} y  x^{k_2} y \ldots x^{k_n} y$, where $k_1 > k_2 > \ldots > k_{n-1} > k_n + 1$, such that an upper volume bound is linear in the word period. By \refthm{Classification}, there exists a sequence of classes of modular knot complements that share the same linear upper bound. In this section, we further generalise such a family of modular knot complements with volume bounds depending linearly on the word period. 

Unless otherwise specified, from now on in this section, let $L$ be a modular link with labelled code words denoted in (\ref{Words}) with $(x_{\sigma(1)}, \ldots, x_{\sigma(\overline{n})})$ and $(y_{\tau(1)}, \ldots, y_{\tau(\overline{n})})$ denoting its orders within the full $x$-bunch and full $y$-bunch respectively. Let $\calP\from M\to B$ be the bundle projection map of the Seifert-fibred space $M\coloneqq \SS^3\setminus \calL_{\textup{SF}}$ of $L$. Note that \refdef{ParentLink} constructs a link projection of the parent link under $\calP$, and the link projection can be considered as lying in four disjoint regions. For the region containing only the $y$-horizontal and $y$-vertical line segments (red) (see \reffig{Step3} for example), we call it the \emph{top-left region}.  For the region containing $x$-horizontal (blue) and $y$-vertical line segments (red), we call it the \emph{bottom-left region}. Similarly, we call the region containing $x$-horizontal and $x$-vertical line segments (blue) the \emph{bottom-right region}, and we call the region containing $y$-horizontal (red) and $x$-vertical (blue) line segments the \emph{top-right region}. 

\begin{theorem}
    If there exists a family of modular links $\{L^{(i)}\}$ such that the number of self-intersections of the parent link projection $\calP({L^{(i)}}_{\textup{p}})$ is linear in the sum of word periods $\overline{n}$, then there exists an upper bound for the hyperbolic volumes $\mathrm{Vol}(\mathrm{T^1}S_{\textup{mod}} \setminus L^{(i)})$ such that the bound depends only on the sum of word periods $\overline{n}$ and is linear in $\overline{n}$. 
\end{theorem}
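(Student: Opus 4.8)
The plan is to mirror the proof of \refthm{QuadraticUpperBound} almost verbatim, the only change being that the quadratic estimate on the parent link is replaced by the hypothesized linear one. First I would invoke \refprop{DehnFillParentMfd} to realise each complement $\mathrm{T^1}S_{\textup{mod}}\setminus L^{(i)}$ as a Dehn filling of its parent manifold $\SS^3\setminus(\calL_{\textup{SF}}\cup \calL_{L^{(i)}})$; since Dehn filling does not increase volume~\cite{Thurston:Geom&TopOf3Mfd}, it suffices to bound the simplicial volume of the parent manifold. By \refthm{SelfIntersectionNumber} this simplicial volume is at most $8\,\iota(\calP(\calL_{L^{(i)}}),\calP(\calL_{L^{(i)}}))$, so the entire task reduces to showing that the self-intersection number of the projected link $\calP(\calL_{L^{(i)}})$ is linear in $\overline{n}$.

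Next I would decompose $\iota(\calP(\calL_{L^{(i)}}),\calP(\calL_{L^{(i)}}))$ exactly as in the proof of \refthm{QuadraticUpperBound} into three contributions: those coming from the parent link ${L^{(i)}}_{\textup{p}}$, those coming from the trefoil $T$, and those coming from the untwisting unknots $V_{a_1},\ldots,V_{a_{\#l}}$. The crucial observation is that only the first of these was quadratic in that proof; the trefoil accounts for $4\overline{n}+3$ self-intersections and the unknots account for $2\overline{n}$ (there being $\overline{n}$ blue vertical segments approaching the $y$-annuli in total, each contributing twice), both already linear in $\overline{n}$. Under the hypothesis that $\calP({L^{(i)}}_{\textup{p}})$ has a number of self-intersections linear in $\overline{n}$, the three contributions sum to a linear function of $\overline{n}$.

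Finally I would feed this linear count back through \refthm{SelfIntersectionNumber} together with the identity $\mathrm{Vol}=v_{\textup{tet}}\norm{\,\cdot\,}$ and the fact that Dehn filling decreases volume, to conclude that $\mathrm{Vol}(\mathrm{T^1}S_{\textup{mod}}\setminus L^{(i)})\leq 8\,v_{\textup{tet}}\,\iota(\calP(\calL_{L^{(i)}}),\calP(\calL_{L^{(i)}}))$ is bounded above by an explicit function linear in $\overline{n}$. I do not expect a genuine obstacle: the whole argument is a bookkeeping assembly of results already established, and the only point requiring care is verifying that the trefoil and unknot contributions stay linear uniformly across the family, which is immediate from the counts recorded in the proof of \refthm{QuadraticUpperBound}. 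The hypothesis on the parent link is precisely what eliminates the sole quadratic term.
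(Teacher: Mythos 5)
Your proposal is correct and follows the paper's own proof essentially verbatim: the same reduction via \refprop{DehnFillParentMfd} and the volume-decreasing property of Dehn filling, the same decomposition of $\iota(\calP(\calL_{L^{(i)}}),\calP(\calL_{L^{(i)}}))$ into the parent-link contribution plus the $(4\overline{n}+3)$ trefoil crossings and $2\overline{n}$ unknot crossings, and the same final application of \refthm{SelfIntersectionNumber}. No gaps; the hypothesis indeed removes the only quadratic term, exactly as in the paper.
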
 

\begin{proof}
    By \refprop{DehnFillParentMfd}, each modular link complement $\mathrm{T^1}S_{\textup{mod}}\setminus L^{(i)}$ can be obtained by applying Dehn fillings on its parent manifold $\SS^3\setminus(\calL_{\textup{SF}}\cup \calL_{L^{(i)}})$. 
     
    Note that the number of crossings between the parent link and the union of the trefoil knot $T$ and the unknots $V_i$'s is $(4\overline{n} + 3) + 2\overline{n}$. The number of self-intersections $\iota(\calP(\calL_{L^{(i)}}),\calP(\calL_{L^{(i)}}))$ of the projection of the link $\calL_{L^{(i)}}$ is 
    \[(4\overline{n} + 3) + 2\overline{n} + \iota(\calP({L^{(i)}}_\textup{p}),\calP({L^{(i)}}_\textup{p})),\]
    which is linear in the sum of word periods because of the assumption.

    As Dehn filling decreases volume \cite{Thurston:Geom&TopOf3Mfd}, it follows from \refthm{SelfIntersectionNumber} that the hyperbolic volumes $\mathrm{Vol}(\mathrm{T^1}S_{\textup{mod}} \setminus L^{(i)})$ are bounded above by a polynomial that is linear in $\overline{n}$. 
\end{proof} 

\begin{theorem} \label{Thm:LinearBound}
    Let $n, i\in\ZZ_+$.  Let $K$ be a modular knot with code word $x^{k_1} y^i  x^{k_2} y^i \ldots x^{k_n} y^i$.  If \[k_1 > k_2 > \ldots > k_{n-1} > k_n \text{ \
 or \ } k_1 < k_2 < \ldots < k_{n-1} < k_n, \] then 
 \[\mathrm{Vol}(\mathrm{T^1}S_{\textup{mod}} \setminus K) \leq 8 v_{\textup{tet}} (7n+2),\]
where $v_{\textup{tet}}\approx 1.01494$ is the volume of the regular ideal tetrahedron.
\end{theorem}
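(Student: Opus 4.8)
The plan is to reuse the machinery behind \refthm{QuadraticUpperBound}. By \refprop{DehnFillParentMfd} the complement $\mathrm{T^1}S_{\textup{mod}}\setminus K$ is obtained by Dehn filling the parent manifold $\SS^3\setminus(\calL_{\textup{SF}}\cup\calL_K)$, and since Dehn filling does not increase simplicial volume \cite{Thurston:Geom&TopOf3Mfd}, \refthm{SelfIntersectionNumber} together with $\mathrm{Vol}=v_{\textup{tet}}\norm{\cdot}$ for the (hyperbolic) modular knot complement gives
\[\mathrm{Vol}(\mathrm{T^1}S_{\textup{mod}}\setminus K)\leq v_{\textup{tet}}\,\norm{\SS^3\setminus(\calL_{\textup{SF}}\cup\calL_K)}\leq 8\,v_{\textup{tet}}\,\iota(\calP(\calL_K),\calP(\calL_K)).\]
Everything therefore reduces to exhibiting a projection of $\calL_K=K_{\textup{p}}\cup T\cup V_i$ with at most $7n+2$ self-intersections. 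Here $\overline n=n$, and because every $y$-exponent equals $i$ there is a single distinct $y$-exponent, so $\#l=1$, there is a single $y$-annulus $A^y_i$, and a single unknot $V_i$ encircling all $n$ blue vertical segments.

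First I would record the orders within the full bunches via \refprop{OrderWithinFullBs}. As the $y$-exponents are all equal the ascending-$y$ step is vacuous, so the order within the full $x$-bunch is governed by the descending order of the following exponents $k_{i+1}$; combined with strict monotonicity of the $k_i$ this makes the order within the full $x$-bunch a single cyclic shift $(x_n,x_1,x_2,\ldots,x_{n-1})$ in the decreasing case (and symmetrically the order within the full $y$-bunch is $(y_n,y_1,\ldots,y_{n-1})$), with the reversed shifts in the increasing case. The essential consequence of this computation is that consecutive bases $x_i,x_{i+1}$ (and $y_i,y_{i+1}$) occurring in the word are \emph{adjacent} in the bunch orders, so in the diagram of \refdef{ParentLink} the vertical connections join neighbouring levels.

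Next I would split the self-intersection count of $\calP(\calL_K)$ exactly as in the proof of \refthm{QuadraticUpperBound}: the trefoil $T$ contributes $4\overline n+3$ and the unknot $V_i$ (encircling all $n$ blue verticals, since $\#l=1$) contributes $2\overline n$, so it suffices to prove the parent knot alone satisfies $\iota(\calP(K_{\textup{p}}),\calP(K_{\textup{p}}))\leq n-1$; then the total is at most $(4n+3)+2n+(n-1)=7n+2$. To bound $\iota(\calP(K_{\textup{p}}))$ I would, as in \reflem{SelfIntersectionsP(Lp)}, count crossings region by region: red-vertical against blue-horizontal in the bottom-left region, blue-vertical against red-horizontal in the top-right region, and blue-vertical against blue-horizontal in the bottom-right region, the top-left region contributing nothing by Remark~\ref{Rmk:NoCrossings}. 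Using the adjacency coming from the cyclic-shift orders, each non-wrap connection $x_i\to y_i\to x_{i+1}$ can be routed by the horizontal extensions of \refdef{ParentLink} so as to link adjacent levels without crossing any intervening horizontal segment, so that the only forced crossings are those produced by the single cyclic closure of the knot, which accumulate to at most $n-1$.

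The main obstacle is precisely this final count. A naive ``which horizontals lie vertically between the two endpoints'' estimate overcounts badly, because the horizontal extensions and the over/under conventions of Step~(4) of \refdef{ParentLink} are engineered to route each vertical to the correct side of a horizontal endpoint and thereby cancel most potential crossings. Making this routing explicit—pinning down, for each vertical segment, exactly which horizontal endpoints it must pass and on which side—and then verifying that the inversions forced by the cyclic shift sum to at most $n-1$ (uniformly in the monotone direction, the increasing case reducing to the decreasing one by the left–right reflection symmetry of the template) is the technical heart of the argument. Once $\iota(\calP(K_{\textup{p}}),\calP(K_{\textup{p}}))\leq n-1$ is established, the displayed chain of inequalities yields $\mathrm{Vol}(\mathrm{T^1}S_{\textup{mod}}\setminus K)\leq 8\,v_{\textup{tet}}(7n+2)$.
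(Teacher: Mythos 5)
Your framework is exactly the paper's framework: reduce via \refprop{DehnFillParentMfd} and the fact that Dehn filling decreases volume, apply \refthm{SelfIntersectionNumber}, split $\iota(\calP(\calL_K),\calP(\calL_K))$ as $(4n+3)$ from $T$ plus $2n$ from the single unknot $V_i$ plus the parent-knot contribution, and compute the cyclic-shift bunch orders from \refprop{OrderWithinFullBs} (your orders agree with the paper's in both monotone cases). But the inequality $\iota(\calP(K_{\textup{p}}),\calP(K_{\textup{p}}))\leq n-1$ is the entire content of this theorem beyond \refthm{QuadraticUpperBound}, and you do not prove it: you explicitly defer it as ``the technical heart''. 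That is a genuine gap, not a detail to be routinized away.

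Moreover, the mechanism you sketch for closing it is wrong, so pursuing your stated plan would fail. You claim every non-wrap connection $x_j\to y_j\to x_{j+1}$ can be routed crossing-free, so that the $n-1$ crossings are ``produced by the single cyclic closure of the knot''. In the decreasing case $k_1>\cdots>k_n$ the closure connection $y_n\to x_1$ produces \emph{no} crossings; instead, all $n-1$ crossings come from the ordinary connection $y_{n-1}\to x_n$: since $k_n$ is the smallest exponent, the blue $x_n$-horizontal ends at the outermost $x$-annulus, so this red vertical sits at the outermost (rightmost) position while $y_{n-1}$ is the bottommost red level, forcing it to cross all $n-1$ other blue horizontals, each of which extends strictly further left. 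No choice of routing removes these crossings. In the increasing case the pattern is different again, and not obtainable from the decreasing case by a template reflection (which would swap the roles of $x$- and $y$-exponents and leave the family): there the topmost blue horizontal (the $x_n$-segment, largest exponent, deepest extension) is crossed once by each of the $n-1$ red verticals not ending on it, so the crossings are spread one per vertical rather than accumulated on a single segment. The paper's proof is precisely this two-case analysis: strict monotonicity of the $k_j$ together with the single $y$-annulus makes the top-right and bottom-right regions crossing-free (the blue verticals nest compatibly with the cyclic-shift orders and the adjustment step of \refdef{ParentLink}), and the bottom-left region contains exactly $n-1$ crossings located as described above. Your total $7n+2$ is correct, but the step that produces it is absent, and your heuristic for where the $n-1$ crossings live points at the wrong connection.
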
 

\begin{proof} We consider two cases below. 

\textbf{Case 1:}  Suppose $k_1 > k_2 > \ldots > k_{n-1} > k_n$. By \refprop{OrderWithinFullBs}, the order within full $x$-bunch is $(x_n, x_1, x_2, \ldots, x_{n-1})$, and the order within full $y$-bunch is $(y_n, y_1, y_2, \ldots, y_{n-1})$.  By the construction method of parent link in \refdef{ParentLink}, the set of $n$ blue vertical line segments approach the same $y$-annulus. Hence, together with the adjustment step in \refdef{ParentLink}, the top-right region and the bottom-right region of the projected parent knot of $K$ do not have any self-intersections. For the bottom-left region, the rightmost red vertical line segment intersects all the other horizontal line segment(s) while the other arcs remain parallel to one another, thus the parent knot projection has $(n-1)$ self-intersections. 

\textbf{Case 2:} Suppose $k_1 < k_2 < \ldots < k_{n-1} < k_n$.  By \refprop{OrderWithinFullBs}, the order within full $x$-bunch is $(x_{n-1}, \ldots, x_2, x_1, x_n)$, and the order within full $y$-bunch is $(y_{n-1}, \ldots, y_2, y_1, y_n)$. By a similar argument as in Case 1, the top-right region and the bottom-right region of the projected parent knot of $K$ do not have any self-intersections. For the bottom-left region, the topmost blue horizontal line segment intersects all the other vertical line segment(s) while the other arcs remain parallel to one another, thus the parent knot projection has $(n-1)$ self-intersections. 

In any case, the total number of self-intersections of $\calP(\calL_K)$ is 
\[ (n-1) + (4n+3) + 2n = 7n+2,\] 
where $\calL_k$ consists of the parent knot $K_\textup{p}$, the trefoil knot $T$, and the unknot $V_i$ encircling the $n$ blue vertical line segments. 

By \refthm{SelfIntersectionNumber} (Theorem~1.5 in \cite{Cremaschi-RodriguezMigueles:HypOfLinkCpmInSFSpaces}), we have 
\[\norm{\SS^3\setminus(\calL_{\textup{SF}}\cup \calL_K)} \leq 8 \ \iota(\calP(\calL_K),\calP(\calL_K)) \leq 8 (7n+2).\]

As Dehn filling decreases volume \cite[Proposition~6.5.2]{Thurston:Geom&TopOf3Mfd}, it follows from \refprop{DehnFillParentMfd} that
\[\mathrm{Vol}(\mathrm{T^1}S_{\textup{mod}} \setminus K)=v_{\textup{tet}}\norm{\mathrm{T^1}S_{\textup{mod}} \setminus K}\leq v_{\textup{tet}} \norm{\SS^3\setminus(\calL_{\textup{SF}}\cup \calL_K)}\]  \[ \leq 8 \ v_{\textup{tet}} (7n+2),\] 
which is an upper bound that is linear in the word period. 
\end{proof}

\section{Further discussion} \label{Sec:Discussion}  
\subsection{Comparison of upper volume bounds} 
The upper bound in \cite[Section 3]{Bergeron-Pinsky-Silberman:UpperBound} is proportional to the sum of all word periods and the logarithm of the product of all word exponents.  If the product of all word exponents is $e^{\overline{n}^k}$ for some $k>2$, then such a bound grows faster than quadratically with the sum of word period(s) $\overline{n}$. 

The notation $r_1$ in \cite[Theorem 1.7]{C-F-K-N-P:VolBsForGenTwistedTorusLinks}  for each Lorenz link corresponds to the parameter denoted by $r_{k-1}$ in \cite[Theorem 1]{Birman-Kofman:NewTwistOnLorenzLinks}.  
% (e.g. ``6'' in Figure 3 caption). 
Consider an overcrossing strand $\hat{x}$ associated with the parameter $r_{k-1}$ in \cite[Theorem 1]{Birman-Kofman:NewTwistOnLorenzLinks}. Note that $\hat{x}$ corresponds to an overcrossing strand with the second largest (negative) slope. Consider words with two consecutive $y$-exponents in their ascending order differing by at least two. By \refthm{UnionBunchesOfTurns} in this paper, there are at least two consecutive overcrossing strands ending at distinct $y$-intervals with integral labels differing by at least two. Thus, we may assume the overcrossing strand $\hat{x}$ starts from the $x$-split line and ends at the positive side of the branch line. 

Let $\hat{y}$ be the $y$-turn starting from the endpoint of $\hat{x}$.
We can cyclic-permute the word such that the first letter corresponds to the $y$-turn $\hat{y}$. As the exponent of the first base in the cyclically permuted word can be made arbitrarily large compared to those of other bases. By \refthm{UnionBunchesOfTurns}, the parameter $r_{k-1}$ can be arbitrarily large as well, depending on the size of the exponent. The upper volume bound in \refcor{QuadraticUpperBound} is independent of the word exponents and serves as a sharper volume upper bound for hyperbolic Lorenz link complements in the above case. 

\subsection{Limitation of the proof techniques}
In this paper, we use the notion of bunches, annular Dehn fillings, and upper volume bounds in terms of self-intersections to prove our main results. We suspect the best upper volume bound for all modular link complements that this technique can prove is one that depends quadratically on the braid index. The reason for this guess is as follows: a sequence of parent links with growing braid indices can be constructed such that each parent link has a half-twist; the number of intersections contributed by half-twists grows quadratically with the braid index. If one wants to show that the upper volume bound grows slower than a quadratic rate, one may need other techniques. 

\subsection{Further questions}
\refthm{QuadraticUpperBound} shows an upper volume bound which is quadratic in the braid index.  We are unsure at the moment whether a quadratic bound is sharp for all modular link complements. It is interesting to know whether there is a family of modular link complements with volumes converging (from below) to a bound that is quadratic in the braid index.  

There are sequences of closed geodesics with word periods growing to infinity and the hyperbolic volumes of the corresponding modular link complements are uniformly bounded (i.e., the upper bound is a real constant not depending on the word period) \cite[Corollary 1.2]{RodriguezMigueles:LowerBound}. We are interested in whether the techniques used in this paper can be used to find uniform bounds for families of modular link complements with growing word periods and highly repeated subwords. It is also interesting to explore whether the bunch notion and results in this paper can be generalised to the complements of the canonical lifts of periodic geodesics in hyperbolic surfaces other than the modular surface.

\bibliographystyle{amsplain}  
\bibliography{biblio_MathSciNet.bib} 
\end{document}